\documentclass[a4paper,reqno,12pt]{amsart}
\usepackage{tcolorbox}
\newcommand{\comm}[2]{\begin{tcolorbox}[colback=gray!40]\textbf{Comment} (#1) : #2\end{tcolorbox}}
\usepackage{amsmath}
\usepackage{amssymb}
\usepackage{amsthm}
\usepackage{hyperref}
\usepackage{enumerate}
\usepackage{graphicx}
\usepackage{color}
\usepackage{xcolor}
\usepackage{mathtools}
\usepackage{float}
\usepackage{tikz}
\usepackage{tikz-cd}
\usetikzlibrary{arrows}
\usepackage{bm}
\usepackage{mathrsfs}
\usepackage{soul}
\DeclareMathAlphabet\euscript{U}{eus}{m}{n}
\usepackage[all,line,
]{xy} 
\CompileMatrices

\newtheorem{theorem}{Theorem}
\newtheorem{corollary}[theorem]{Corollary}

\newtheorem{lemma}[theorem]{Lemma}
\newtheorem{proposition}[theorem]{Proposition}

\theoremstyle{definition}
\newtheorem{definition}[theorem]{Definition}
\newtheorem{remark}[theorem]{Remark}

\newtheorem{example}[theorem]{Example}

\newcommand{\per}{\operatorname{Per}}

\title[Embedding into near Markov shifts]{A Krieger Embedding Theorem for Near Markov Sofic Shifts}
\author[B. Marcus]{Brian Marcus}
\address[Brian Marcus]{The University of British Columbia, Vancouver}
\email{marcus@math.ubc.ca}

\author[T. Meyerovitch]{Tom Meyerovitch}
\address[Tom Meyerovitch]{Ben Gurion University of the Negev. Departement of Mathematics. Be’er Sheva, 8410501, Israel}
\email{mtom@bgu.ac.il}

\author[C. Wu]{Chengyu Wu}
\address[Chengyu Wu]{The University of British Columbia, Vancouver}
\email{chengyuw@connect.hku.hk}

\begin{document}

\begin{abstract}
  Krieger's classical embedding theorem gives necessary and sufficient conditions for embedding a subshift into a mixing shift of finite type (SFT) as a proper subshift. The same result does not hold if one replaces mixing SFT by a mixing sofic shift. In this paper, we generalize Krieger's conditions to give
  necessary and 
  sufficient conditions for embedding a subshift into a mixing (in fact irreducible) near Markov sofic shift (a special conjugacy-invariant class of sofic shifts).   We also show that  if the subshift to be embedded is irreducible sofic, then the conditions are finitely decidable. 
\end{abstract}

	\maketitle
	{\em Dedicated to Benjamin Weiss on the occasion of his 85th birthday. We are forever grateful for his discovery of sofic shifts.}	\section{Introduction}

    
For a subshift $X$, let $h(X)$ be the topological entropy of $X$ and  $q_n(X)$ denote the number of periodic points of least period $n$ in $X$. Krieger's celebrated embedding theorem, characterizes the  subshifts that can be embedded as a {\em proper} subshift of a mixing shift of finite type (SFT). 

\begin{theorem}\label{Krieger} (W. Krieger \cite{Kr1}) Let $Z$ be a subshift and $Y$ a mixing SFT such that $h(Z) < h(Y)$. Then $Z$ embeds into $Y$ if and only if $q_n(Z) \leq q_n(Y)$ for all $n \in \mathbb N$.
\end{theorem}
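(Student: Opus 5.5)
Necessity is immediate: an embedding $\phi\colon Z\to Y$ is an injective shift-commuting map, so it sends points of least period $n$ in $Z$ injectively to points of least period $n$ in $Y$. Hence $q_n(Z)\le q_n(Y)$. (Similarly $h(Z)\le h(Y)$, and in fact $h(Z)<h(Y)$ unless $Z=Y$, since $Y$ is entropy-minimal. This is why the strict hypothesis is the natural one.) All the work is in sufficiency, and I would follow Krieger's marker strategy.

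\emph{Setup.} After recoding, I would take $Y$ to be an edge shift of a primitive graph. Since $h(Z)<h(Y)$ and $Y$ is mixing, for large $N$ the number of $Y$-words of length $N$ that avoid a fixed marker pattern and connect prescribed states is exponentially larger than the number of $Z$-words of length about $N$. I would then use the following tools.

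\textbf{Marker lemma.} For every $N$ there is a clopen set $F\subseteq Z$ such that the sets $\sigma^i F$, $0\le i<N$, are pairwise disjoint, and every point either has gaps between consecutive visits to $F$ of length at most about $2N$, or lies, along an infinite stretch, on a long periodic segment of period $<N$.

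\textbf{Periodic parts.} Points of low period ($p<N$) are handled separately. Using $q_p(Z)\le q_p(Y)$, choose an injective shift-commuting assignment from the periodic orbits of $Z$ with period $<N$ into periodic orbits of $Y$ of the same period. I would choose the image orbits to avoid the marker word used below, which is possible because there are many more $Y$-orbits of each large period. Long periodic stretches of a point of $Z$ are then sent to the corresponding periodic stretches in $Y$.

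\textbf{Coding between markers.} Each gap between consecutive markers (a $Z$-word of length between $N$ and $2N$) is sent injectively to a $Y$-word of the same length. This $Y$-word has the form: a fixed synchronizing marker word $w$, then a data block encoding the $Z$-word, with $w$ appearing nowhere else. Entropy counting guarantees there are enough such blocks.

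\textbf{Main obstacle: transitions.} The hardest part is the transitions between periodic stretches and marker-coded stretches. The image must still be a legal $Y$-point, via mixing connecting paths. It must also remain injective, meaning one can tell from the image where the periodic segments end and recover how the $Z$ periodic segment was truncated. I would follow Krieger's approach (as simplified by Boyle). The transition blocks themselves record the phase and length information, using a second distinguished marker. I would verify that the decoded $Y$-image uniquely determines the block boundaries, so that the resulting sliding block code is injective.

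Finally, continuity and shift-commutation follow because $F$ is clopen and all choices are made locally.
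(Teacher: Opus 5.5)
The paper gives no proof of this theorem. It cites Krieger and only summarizes the mechanism: the periodic part is handled by the periodic point condition, the non-periodic part by entropy, and the two are glued using the synchronizing property of a mixing SFT. That is exactly your architecture. As a proof, however, your outline has a gap at each of the two places where the real work lies.

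\textbf{First gap: the marker versus the periodic orbits.} Call your periodic-orbit assignment $\psi$. You justify making the $\psi$-images avoid the marker by ``many more $Y$-orbits of each large period'', but that runs in the wrong direction. For large $n$ the inequality $q_n(Z)\le q_n(Y)$ already follows from $h(Z)<h(Y)$. The hypothesis only matters at small $n$, where one may have $q_n(Z)=q_n(Y)$. Then $\psi$ must use \emph{every} period-$n$ orbit of $Y$, so there is nothing to choose. Instead the marker itself must occur in no periodic point of $Y$ of small period, while staying short compared with $N$ so the data blocks have room. What counts as a ``large'' period depends on the marker, through the entropy of the marker-free subshift, so this is circular unless the constants are ordered, for instance as follows.
\begin{enumerate}
\item Fix a proper mixing sub-SFT $Y'\subsetneq Y$ with $h(Z)<h(Y')$, and $M_0$ with $q_n(Z)\le q_n(Y')$ for $n\ge M_0$.
\item Take markers of length $>M_0$ that are not $n$-periodic for any $n<M_0$, so they lie in no periodic point of period $<M_0$. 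Make them contain a word forbidden in $Y'$.
\item Send orbits of period $<M_0$ injectively into $Y$, send those of period $\ge M_0$ into $Y'$, and code data blocks by $Y'$-paths.
\item Only then choose $N$.
\end{enumerate}

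\textbf{Second gap: injectivity is deferred.} You write ``I would follow Krieger's approach \dots\ I would verify'', but that step is the substance of the theorem. What must be shown is that the marker words occur in the image of $z$ exactly at the marker coordinates of $z$. They must never occur inside a data block, a transition path, or a $\psi$-periodic stretch, and never straddle a junction between two of these. Once that is arranged (unbordered markers, fixed-length mixing transitions between a base state and the cycle of $\psi(\bar z)$, where $\bar z$ is the periodic point of $Z$ carrying the stretch, plus the choices above), decoding is routine. Between consecutive markers you either invert the injective data code, or read off $\psi(\bar z)$ from a stretch of length at least $2N$, and hence recover $\bar z$ and its phase. Injectivity of a sliding block code is all that is needed.

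Two smaller corrections. Your marker lemma must be stated locally: if $\sigma^i z\notin F$ for all $|i|<N$, then $z_{[-N,N]}$ is $p$-periodic for some $p<N$. This matters because long \emph{finite} periodic gaps occur, not just infinite ones. Also, $Z$ is an arbitrary subshift, so a $p$-periodic stretch extends to a $p$-periodic point of $Z$ only once its length exceeds a threshold $L(N)$ given by compactness. Periodic gaps shorter than $L(N)$ must therefore be coded as data blocks rather than through $\psi$.
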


This indeed gives a characterization of proper embeddings of subshifts into mixing SFTs because an embedding from a subshift $Z$ into a mixing SFT $Y$ is proper iff 
$h(Z) < h(Y)$.

Krieger's embedding theorem is one of the most central and important results in symbolic dynamics, with a wide range of applications. The necessity of the periodic point condition is trivial, as is the entropy inequality. It is the sufficiency of the conditions that is the heart of the theorem, namely the construction of an embedding. Roughly speaking, the periodic point condition enables a ``local'' embedding on a ``periodic part'' of $Z$ and the entropy condition enables a ``local'' embedding on the ``non-periodic part" of $Z$; a globally synchronizing property, using that $Y$ is a mixing SFT, enables the ``gluing together'' of the local embeddings.

Terminology used in Theorem \ref{Krieger} above and terminology below will be defined in Section \ref{section:preliminaries}.

 It is not hard to see that Krieger's embedding theorem does not hold in general when the target is mixing sofic instead of being a mixing SFT. Indeed, let $Y$ be the even shift, i.e., the subshift over the alphabet $\{0,1\}$ such that $10^k1$ is allowed iff $k$ is even. Let $Z$ be an irreducible SFT containing exactly two fixed points and $q_n(Z)\leq q_n(Y)$ for all $n$ and $h(Z) < h(Y)$. If $Z$ embeds into $Y$, then $Z$ is conjugate to an irreducible sub-SFT $U$ contained in $Y$, and $U$ contains both $0^\infty$ and $1^\infty$. Let $k$ be an integer greater than the memory of $U$ as an SFT. Then, by the irreducibility of $U$, for all $k$, $10^k$ and $0^k1$ are both in the language of $U$. But this implies that $10^k1$ is allowed in $U$, contradicting the definition of $Y$ as the even shift. Thus, to see that Krieger's embedding theorem does not hold for $Y$ as the target shift, it suffices to find a subshift $Z$ satisfying the requirements above. 

 The existence of such a $Z$ is fairly clear.  An explicit such $Z$ is given by the edge shift defined by the finite directed graph  in Figure \ref{example_noembedding_into_sofic} below. Here, we have  $h(Z)\approx \log 1.46557$, which is less than $h(Y) = \log \frac{\sqrt{5}+1}{2}$. By the estimates in \cite[Page 351]{LM} one obtains that $q_n(Y)\leq q_n(W)$ for all $n\geq 14$, where $W$ is the golden mean shift. Since $q_n(W)=q_n(Y)$ for all $n\geq 2$, we have $q_n(Z)\leq q_n(Y)$ for all $n\geq 14$. By direct computation, one can verify that  $q_n(Z)\leq q_n(Y)$ for all $1\leq n<14$ (note that this holds trivially for $1 \le n \le 5$ since $q_1(Z) = 2 = q_1(Y)$ and for $2 \le n \le 5$, $q_n(Z) = 0$). 
Thus, by the argument above, $Z$ does not embed into $Y$.

\begin{figure}[H]
\label{graph}
\begin{center}
\begin{tikzpicture}[scale=0.37]
\draw [opacity=0] (0,0) grid (21,6);
\node [circle, draw, thick] (a1) at (3,3) {};
\node [circle, draw, thick] (b1) at (8,6) {};
\node [circle, draw, thick] (c1) at (13,6) {};
\node [circle, draw, thick] (d1) at (18,3) {};
\node [circle, draw, thick] (e1) at (13,0) {};
\node [circle, draw, thick] (f1) at (8,0) {};
\draw [-stealth, black, thick] (a1) -- node[below] {} (b1);
\draw [-stealth, black, thick] (b1) -- node[below] {} (c1);
\draw [-stealth, black, thick] (c1) -- node[below] {} (d1);
\draw [-stealth, black, thick] (d1) -- node[below] {} (e1);
\draw [-stealth, black, thick] (e1) -- node[below] {} (f1);
\draw [-stealth, black, thick] (f1) -- node[below] {} (a1);
\draw [-stealth, black, thick] (a1) to [out=240, in=120, style={min distance=6cm}] (a1);
\draw [-stealth, black, thick] (d1) to [out=300, in=60, style={min distance=6cm}] (d1);
\end{tikzpicture}
\end{center}
\caption{The edge shift $Z$}
\label{example_noembedding_into_sofic}
\end{figure}
This phenomenon has been known since the time of Krieger's embedding theorem; indeed, see ~\cite[Example 3.1]{B} for a more elaborate example $Y$.

Recently, Krieger~\cite{Kr3} showed that for some special classes of (non-SFT) sofic shifts $Y$ his original necessary conditions are sufficient for proper embedding. 
The main results of our paper, Theorem~\ref{embedding_into_near_markov} (characterization of embeddings) and Theorem~\ref{finite-deci-main-theorem} 
(decidability of characterization), give a complete and finitely decidable characterization of existence of proper embeddings of irreducible sofic shifts into a given target sofic shift $Y$ in a conjugacy-invariant class.  This is the class of {\em near Markov shifts,} which are defined as those that can be presented as a factor of an irreducible SFT which is one-to-one off of a finite set.  These are, in some sense, the sofic shifts that are as close to being SFTs, without actually being SFTs. This class includes the even shift and other irreducible sofic shifts defined by forbidding words in the union of a finite list of words and a finite list of congruence conditions. Our conditions are necessarily stronger than Krieger's original necessary conditions when the target shift is a near Markov shift that is not of finite type. 

In the case when the target shift $Y$ is  the even shift, our main result, Theorem~\ref{embedding_into_near_markov}, reduces to the proposition below, which is proven at the end of Section \ref{sec:embedding_for_near_markov}.

\begin{proposition} \label{embedding_even}
	Let $Z$ be a subshift. Then $Z$ embeds into the even shift $Y$ if and only if one of the following conditions hold:
	\begin{enumerate}
		\item $Z$ is  conjugate to $Y$.
		\item $h(Z)< h(Y)$, $Z$ has at most $1$ fixed point, and $q_n(Z)\leq q_n(Y)$ for all $n \ge 2$. 
		\item $h(Z) < h(Y)$, $Z$ has exactly $2$ fixed points, $q_n(Z)\leq q_n(Y)$ for all $n \ge 2$, 
		and $Z$ admits a $2$-blowup at one of the fixed points of $Z$; that is, there exist a subshift $\hat{Z}$ and a factor code $\pi: \hat{Z}\to Z$ such that for all $z\in Z$, $\vert \pi^{-1}(z)\vert =1$ if $z$ is not that fixed point, and $\pi^{-1}{(z)}$ is an orbit of length $2$ if $z$ is that fixed point.
	\end{enumerate}
\end{proposition}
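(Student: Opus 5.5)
Since the paper says this proposition is deduced at the end of Section \ref{sec:embedding_for_near_markov} from the general characterization, I will do the same: specialize the general conditions to the even shift and show they reduce to (1)--(3). I will use the standard near Markov presentation of $Y$: the golden-mean-type SFT $X$ on states $\{a,b\}$ with edges $a\to a$ labelled $1$, $a\to b$ labelled $0$, $b\to a$ labelled $0$. This factor $\pi_X\colon X\to Y$ is one-to-one except over the fixed point $0^\infty$. The fibre over $0^\infty$ is the orbit of length $2$ given by $(ab)^\infty$ and $(ba)^\infty$. So the exceptional finite set is the single point $0^\infty$, with a $2$-point orbit above it. The fixed point $1^\infty$ is a genuine fixed point of $X$. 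Periodic points of $Y$ of period $n\ge 2$ correspond bijectively to those of $X$ away from the exceptional orbit.

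First I establish necessity. If $Z$ embeds in $Y$ and $h(Z)=h(Y)$, then, since $Y$ is irreducible sofic and hence entropy minimal, the image is all of $Y$, giving (1). Otherwise $h(Z)<h(Y)$, and $q_n(Z)\le q_n(Y)$ for all $n$ is automatic. For $n=1$ this gives at most $2$ fixed points. If there are $2$, the embedding $\phi$ must send one of them, say $z_0$, to $0^\infty$. I then pull back $\pi_X$ along $\phi$: let $\hat Z=\pi_X^{-1}(\phi(Z))$, viewed as a subshift over $Z$ through $\phi^{-1}\circ\pi_X$. This factor is one-to-one off $z_0$ and has a $2$-orbit over $z_0$, which is exactly a $2$-blowup, giving (3).

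For sufficiency, case (1) is trivial. In case (2) I embed $Z$ into the SFT $X$ itself. The image must avoid the orbit $\{(ab)^\infty,(ba)^\infty\}$, so that $\pi_X$ is injective on the image. Since $X$ is mixing and $h(X)=h(Y)$, the natural tool is Krieger's Theorem \ref{Krieger} applied to a mixing SFT $X'\subset X$ obtained by forbidding a long word. The forbidden word is chosen to destroy that $2$-orbit while costing only a small amount of entropy and few periodic points. The difficulty is that removing the $2$-orbit lowers $q_2$ by exactly $2$, and the bound $q_2(Z)\le q_2(Y)=q_2(X)-2$ is exactly what is needed. For large $n$ the entropy gap $h(Z)<h(X')$ leaves room. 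Rather than redo this counting by hand, I would invoke the main Theorem \ref{embedding_into_near_markov}, whose periodic point conditions presumably handle exactly this bookkeeping.

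In case (3), I would lift $Z$ to the blowup $\hat Z$ and embed $\hat Z$ into $X$ with the $2$-orbit over $z_0$ sent to $\{(ab)^\infty,(ba)^\infty\}$. The other fixed point of $Z$ goes to the point $a^\infty$ labelled $1^\infty$. This requires a Krieger-type embedding with a prescribed map on a finite invariant set, extended by a marker argument. The composition with $\pi_X$ is then injective, because it identifies exactly the two points that $\hat Z\to Z$ identifies.

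The main obstacle is this extension step, and more precisely the compatibility of periodic point counts with the prescribed partial embedding. Here I will lean on the general theorem, checking only that its hypotheses specialize to the stated counts $q_n(Z)\le q_n(Y)$ for $n\ge2$.
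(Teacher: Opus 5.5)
Your route is the paper's. The necessity argument, pulling $\pi_X$ back along $\phi$, is exactly the fiber-product step in the necessity part of Theorem~\ref{embedding_into_near_markov}, and it is correct. For sufficiency you end up invoking that theorem, as the paper does.

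However, you never carry out the specialization, and that specialization is the whole content of the paper's proof. The check you promise (``$q_n(Z)\le q_n(Y)$ for $n\ge 2$'') also leaves out the one place where the fixed-point hypotheses are used. Here is what is needed:
\begin{enumerate}
\item Condition 2(d) is vacuous, since $\hat Y$ is mixing and so $p=1$.
\item In case (2), take $Z_0=\emptyset$ and $\hat Z=Z$; then 2(a) and 2(b) are vacuous.
\item In case (3), take $Z_0=\{z_0\}$ and $\hat Z_0$ the $2$-orbit over $z_0$. Set $\rho_0(z_0)=0^\infty$, let $\hat\rho_0$ be a shift-commuting bijection of $\hat Z_0$ onto $\hat Y_0=\{(ab)^\infty,(ba)^\infty\}$, and use the given $2$-blowup for 2(b).
\item For $n\ge 2$, condition 2(c) reads $q_n(Z)\le q_n(Y)$, because $Z_0$ and $Y_0$ consist of fixed points.
\item For $n=1$, condition 2(c) reads $q_1(Z\setminus Z_0)\le q_1(Y\setminus Y_0)=1$. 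This is exactly where ``at most one fixed point'' (case (2)) and ``$z_0$ is one of exactly two fixed points'' (case (3)) are used.
\end{enumerate}

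The obstacles you flag are not really there. You can even bypass Theorem~\ref{embedding_into_near_markov} and apply Theorem~\ref{Krieger} directly to your SFT $X$. We have $q_1(X)=1$, $q_2(X)=2$, $q_n(X)=q_n(Y)$ for $n\ge 3$, and $q_2(Y)=0$.

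In case (2), $Z$ satisfies Krieger's conditions for $X$. Any embedding $\psi\colon Z\to X$ automatically misses $\{(ab)^\infty,(ba)^\infty\}$, because embeddings preserve least periods and $q_2(Z)=0$. So $\pi_X\circ\psi$ is already an embedding, and no word needs to be forbidden.

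In case (3), $q_1(\hat Z)=1$, $q_2(\hat Z)=2$ and $q_n(\hat Z)=q_n(Z)$ for $n\ge 3$. So Krieger gives an embedding $\hat\rho\colon\hat Z\to X$. This $\hat\rho$ is forced to map the $2$-orbit onto $\{(ab)^\infty,(ba)^\infty\}$, since these are the only points of least period $2$ in $X$. No prescribed-extension or marker argument is needed.

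Finally, the injective map is the induced map $\rho\colon Z\to Y$ with $\rho\circ\pi=\pi_X\circ\hat\rho$, not $\pi_X\circ\hat\rho$ itself. Its continuity needs a short argument: for example, $\pi\colon\hat Z\to Z$ is a closed surjection and hence a quotient map. This is what Lemma~\ref{lem:decending_embeddings} supplies.
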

We remark here that \cite[Theorem 8.6]{T} gave necessary and sufficient conditions for embedding a mixing sofic shift $X$ that satisfies certain special conditions into  a general  sofic shift $Y$. In particular these additional conditions hold when $X$ is a mixing SFT.

We also remark that Krieger~\cite{Kr2} gave necessary and sufficient conditions for the factor problem, analogous to the embedding problem, of given mixing sofic shifts $Y$ and $Z$, with $h(Z) > h(Y)$, factoring $Z$ onto $Y$.

The remainder of the paper is organized as follows. In Section \ref{section:preliminaries}, we review basic concepts and important results from symbolic dynamics. In Section \ref{sec:embedding_for_near_markov}, we state and prove our main result, Theorem \ref{embedding_into_near_markov}, which characterizes when a subshift can be embedded into a near Markov shift. The condition involves the existence of a so-called blowup of a subshift, whose properties are investigated in Section \ref{section:properties_of_blowups}. Finally, in Section \ref{section:finite_decidablity}, we show that given an irreducible sofic shift $Z$ and an irreducible near Markov shift $Y$, the problem of properly embedding $Z$ into $Y$ is finitely decidable.  We also investigate the relation between irreducible SFT covers and irreducible blowups of irreducible sofic shifts.

	\section{Preliminaries} \label{section:preliminaries}

In this section, we review the basics of symbolic dynamics. For further information, the reader may wish to consult the textbooks~\cite{Ki},\cite{LM}. 

The full $\mathcal{A}$-shift  over a finite alphabet   $\mathcal{A}$ is defined to be $\mathcal{A}^{\mathbb{Z}}$, topologized as the product space with $\mathcal{A}$ carrying the discrete topology; in particular, it is a compact metrizable space.  A {\em shift space} or {\em subshift} is a closed, equivalently compact, shift-invariant subset of a full shift; here the shift map $\sigma$ is defined by $\sigma(x) = y$ were $y_i = x_{i+1}$ for every $i$. If $X\subseteq \mathcal{A}^\mathbb{Z}$, we often refer to (the smallest possible such) $\mathcal{A}$ as the {\em alphabet of $X$}. Later in the paper when subshifts $X, Y, Z \cdots$ are introduced without explicitly, we use $\mathcal{A}_X, \mathcal{A}_Y, \mathcal{A}_Z \cdots$ to denote their alphabets.


The {\em language} $\euscript{B}(X)$ of a shift space is the set of all finite words that appear in at least one element of $X$. Elements of $\euscript{B}(X)$ are sometimes called {\em admissible words for $X$}.
A shift space is {\em irreducible} if for all $u,v \in \euscript{B}(X)$, there is a finite word $w$ such that $uwv \in \euscript{B}(X)$. A shift space is {\em (topologically) mixing} if for all $u,v \in \euscript{B}(X)$, there is an $N$ such that for all $n \ge N$, there is a finite word $w$ of length $n$ such that $uwv \in \euscript{B}(X)$.  

A subshift $X \subseteq \mathcal{A}^\mathbb{Z}$ can  be equivalently defined as the set of all points in a full shift which avoid  a (finite or countable) list   of (finite) words  ${\mathcal F} \subseteq \bigcup_{n=1}^\infty \mathcal{A}^n$. In this case we write 
$X = X_{\mathcal F}$. When $\mathcal{F}=\{10^{2n+1}1: n\geq 0\}$, then $X_{\mathcal{F}}$ is the even shift. In the case  that there is a \emph{finite} list ${\mathcal F}$ such that $X = X_{\mathcal F}$, then  $X$ is called a {\em shift of finite type} (SFT). The well-known {\em golden mean shift} is the SFT $X_\mathcal{F}$ defined by taking $\mathcal{F}=\{11\}$. It can be easily shown and is well-known that the even shift is not an SFT.


Shift spaces are topologized in a way that allows continuous and shift-commuting maps 
$\phi: X \to Y$ from one shift space $Z$ to a shift space $Y$ to be represented as sliding block codes, i.e.,  for all $i$, 
$$
\phi(x)_i  =  \Phi(x_{i-m} \ldots  x_{i + n}) 
$$  
where $m$ and $n$ are fixed integers and 
$\Phi$ is a map from words of length $m + n +1$ to symbols. 
Sometimes we write $\phi = \Phi_\infty$, suppressing $m$ and $n$. If $m=n=0$, then $\phi$ is called a $1$-block code. By passing to a higher block, every sliding block code can be recoded to be a $1$-block code.

A {\em conjugacy} is a bijective sliding block code.
An {\em embedding} is an injective sliding block code, and
a {\em factor code} is a surjective sliding block code. In this paper, we sometimes denote an embedding from  $Z$ to $Y$ by $Z\hookrightarrow Y.$


A pair of points $x$ and $y$ in a subshift are called {\em right-asymptotic} (resp., {\em left-asymptotic}) if there exists $N$ such that $x_n = y_n$ for all $n \ge N$ (resp.,  $n \le N$).

A sliding block code $\phi: X\to Y$ is {\em right-closing} (resp. {\em  left-closing}), if  whenever $x,x'$ are left-asymptotic (resp. right-asymptotic) in $X$ and $\phi(x) = \phi(x')$, then $x = x'$. 


A $1$-block code $\phi=\Phi_{\infty}: X\to Y$ is {\em right-resolving} (resp. {\em left-resolving}) if whenever $ab$ and $ac$ are $2$-blocks allowed in $X$ with $\Phi(ab)=\Phi(ac)$, then $b=c$ (resp., if whenever $ba$ and $ca$ are $2$-blocks allowed in $Z$ with $\Phi(ba)=\Phi(ca)$, then $b=c$).  

It is well-known  \cite[Exercise 5.1.11]{LM} that right-resolving (resp. left-resolving) codes must be right-closing (resp. left-closing), and any right-closing (resp. left-closing) code can be recoded to be right-resolving (resp. left-resolving), i.e., for every right-closing (resp. left-closing) code $\phi: X\to Y$, there is a shift space $X'$ and a conjugacy $\tau: X\to X'$ such that $\phi\circ \tau$ is right-resolving (resp. left-resolving).

A {\em sofic shift}~\cite{W} is a subshift that is the image of an SFT via a factor code. An irreducible sofic shift $Y$ is a factor $\phi: X \to Y$ of an irreducible SFT $X$~\cite[Theorem 3.2.1]{LM}; such a $\phi$ is often called a {\em cover} of $Y$. 

SFTs and sofic shifts have useful graphical representations as follows. 
A finite directed graph $G$ defines an SFT in the following ways: one is as a {\em vertex shift}, i.e. the set of all bi-infinite vertex walks  $\widehat{X_G}$ on $G$, and the other is as an {\em edge shift}, i.e the set of all bi-infinite edge walks $X_G$ on $G$. Any SFT can be represented, up to conjugacy, as a vertex shift or an edge shift.  Similarly, if one endows the vertices or edges of $G$ with a vertex labelling or an edge labelling, one obtains a sofic shift, and any sofic shift can be represented in such a way.

 A shift space $Y$ is {\em near Markov} 
 \cite[section 3]{BK} if it is the image of a  factor code $\phi$ on an irreducible SFT such that 
$\{y \in Y: |\phi^{-1}(y)| \geq 2\}$ is finite.

The even shift is a simple example of a near Markov shift that is not an SFT.

The following lemma states that a near Markov shift has a unique minimal irreducible SFT cover.

\begin{lemma} \label{near-markov-minimal-cover} Let $Y$ be a near Markov shift. Then there is an unique (up to conjugacy) irreducible SFT cover $\phi: X\to Y$ such that any other irreducible SFT cover $\pi: Z\to Y$ must factor through $\phi$, i.e., there is a factor code $\tau: Z\to X$ such that the following diagram commutes:
\begin{equation} 
 	\xymatrix{
 		Z \ar[dr]_\pi \ar[rr]^\tau & & X \ar[dl]^\phi  \notag\\
 		&	Y & }.
 \end{equation}
\end{lemma}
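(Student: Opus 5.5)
The candidate for $\phi$ is the \emph{Fischer cover} of $Y$. A near Markov shift $Y$ is an irreducible sofic shift, being a factor of an irreducible SFT. So $Y$ has a minimal right-resolving presentation, its Fischer cover $\phi_F: X_F \to Y$, where $X_F$ is an irreducible edge shift and $\phi_F$ is right-resolving. The plan is to show two things: that $\phi_F$ is the desired universal cover, and that every irreducible SFT cover factors through it. Uniqueness up to conjugacy then follows formally. If $\phi: X\to Y$ and $\phi':X'\to Y$ both have the universal property, we get $\tau: X\to X'$ and $\tau':X'\to X$ over $Y$. Then $\tau'\circ\tau$ is an endomorphism of the irreducible SFT $X$ commuting with $\phi$. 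As shown below, $\phi$ is a.e. one-to-one, so $\tau'\circ\tau$ agrees with the identity on the dense set of points with a single preimage, hence everywhere. So $\tau$ is a conjugacy.

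\textbf{Step 1: the Fischer cover is finite-to-one and one-to-one off a finite set.} Fix a cover $\pi: Z\to Y$ from an irreducible SFT with $\{y: |\pi^{-1}(y)|\ge 2\}$ finite. Then $\pi$ is a.e. one-to-one, so $h(Z)=h(Y)$ and $\pi$ is finite-to-one. Any irreducible right-resolving presentation of $Y$ with the same entropy is finite-to-one, so this applies to $\phi_F$.

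We then need to show that $\phi_F$ is one-to-one except on a finite set. The approach is as follows. Suppose $y$ has two distinct $\phi_F$-preimages. Because $\phi_F$ is right-resolving and finite-to-one, these preimages must differ at all sufficiently negative coordinates, i.e.\ form a "left diamond". We then use synchronizing words. Any $y$ that contains a synchronizing word for $\phi_F$ infinitely often to the left has a unique preimage. So the exceptional points are those whose left tails avoid a fixed magic word.

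To show there are only finitely many such points, compare with $\pi$. Lifting through $\pi$ and using that $\pi$ is injective off a finite set $F$ shows the following: every $y$ whose left tail never sees the magic word must be left-asymptotic to a point of $F$ (a periodic orbit). Moreover, only finitely many $y$ are left-asymptotic to these orbits while still carrying two $\phi_F$-preimages. Otherwise we could construct infinitely many points with two $\pi$-preimages, by splicing the two $\phi_F$-preimages into the SFT $Z$ along the common periodic left tail. This step is where I expect the main obstacle to lie.

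\textbf{Step 2: factoring.} Let $\pi: Z\to Y$ be any irreducible SFT cover. Recode so that $Z$ is an edge shift and $\pi$ is $1$-block. Define $\tau(z)$ to be the unique $\phi_F$-preimage of $\pi(z)$ whenever that preimage is unique. For general $z$, determine $\tau(z)_i$ by the follower-set state of the left-infinite word $\pi(z)_{(-\infty,i]}$, computed through a synchronizing occurrence to the left. When $\pi(z)$ has no such occurrence, use the state of $Z$ at coordinate $i$ instead. This uses the standard fact that, on an irreducible SFT cover, the $Z$-state determines a Fischer state whenever the past is synchronized.

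The resulting map must be a sliding block code. For this I would use that $Z$ has finite memory and that the exceptional points are finitely many periodic orbits, which can be handled by a uniform window. Surjectivity of $\tau$ follows from $\phi_F\circ\tau=\pi$ together with irreducibility and equal entropy. The delicate point is continuity at the finitely many exceptional orbits, and I expect Step 1's finiteness to be exactly what makes a bounded window suffice there.
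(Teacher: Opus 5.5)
\paragraph{The gap is in Step 2.} Step 2 carries the whole content of the lemma. The paper does not reprove it: it cites \cite[Theorem 9]{BKM} and uses that near Markov shifts are AFT.

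Your $\tau$ is a sliding block code only if the Fischer state at coordinate $i$ is determined by a bounded window of $z$. The ``standard fact'' you invoke for this is false for general irreducible sofic shifts. For example, let $Y$ be presented by the irreducible, right-resolving, follower-separated graph with edges $1\xrightarrow{a}1$, $2\xrightarrow{a}2$, $1\xrightarrow{b}3$, $2\xrightarrow{b}3$, $1\xrightarrow{e}3$, $3\xrightarrow{c}1$, $3\xrightarrow{d}2$. Its left Fischer cover is an irreducible SFT cover with states $\{1\},\{1,2\},\{3\}$, obtained from the subset construction on the reversed graph. In it, the blocks $ca^nb$ and $da^nb$ traverse exactly the same edges along $a^n$. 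The Fischer state there is $1$, respectively $2$, and the pasts are synchronized. So no bounded window works, and this cover does not factor through $\phi_F$ at all.

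Even when a factorization exists, the window may need future coordinates (take $\pi=\phi_F\circ\sigma^k$), so ``the state of $Z$ at coordinate $i$'' is not the right object. Hence the near Markov hypothesis has to enter Step 2 in an essential way, and your sketch only gestures at it. For an arbitrary cover $\pi$, which may be infinite-to-one, the points of $Z$ lying over the exceptional orbits need not be finite in number. Continuity of $\tau$ at those points is precisely the unproved claim.

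\paragraph{The missing idea.} Near Markov forces $\phi_F$ to be \emph{left-closing} as well as right-resolving, and being bi-closing is what makes every cover factor through it. The argument runs as follows.
\begin{enumerate}
\item The near Markov cover $\pi_0$ is finite-to-one, so its exceptional set $F$ and $\pi_0^{-1}(F)$ are finite sets of periodic points.
\item Two distinct asymptotic points with a common image would lie in $\pi_0^{-1}(F)$, and distinct periodic points are never asymptotic. So $\pi_0$ is bi-closing.
\item Recode $\pi_0$ to be right-resolving and merge states with equal follower sets. This gives $\pi_0=\phi_F\circ\mu$ with $\mu$ onto, so $\phi_F$ is one-to-one off $F$.
\item This replaces your splicing argument, whose intermediate claim is false as stated. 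In the even shift, points built from the blocks $1$ and $10000$ avoid the magic word $1001$ without being left-asymptotic to $0^\infty$.
\item The same periodic-point argument now shows that $\phi_F$ is left-closing.
\end{enumerate}

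\paragraph{Factoring an arbitrary cover.} Let $\pi:Z\to Y$ be any irreducible SFT cover, and form the fiber product $W$ of $\pi$ and $\phi_F$.
\begin{enumerate}
\item By Proposition~\ref{fibre}, the arm $W\to Z$ is onto and bi-closing, hence finite-to-one. So some irreducible component $W'$ of full entropy maps onto $Z$.
\item The restricted arm has degree one. Doubly transitive points of $Z$ map to doubly transitive points of $Y$, and these have a unique $\phi_F$-preimage.
\item Bi-closing factor codes between irreducible SFTs are constant-to-one, so $W'\to Z$ is a conjugacy.
\item Composing its inverse with the other arm gives $\tau$ with $\phi_F\circ\tau=\pi$. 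This $\tau$ is onto by entropy minimality of $X_F$.
\end{enumerate}
Your uniqueness argument is fine as it stands.
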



\begin{proof}
This follows from \cite[Theorem 9]{BKM} and the fact that a near Markov shift is in particular a so-called almost finite type (AFT) sofic shift (a sofic shift $Y$ is AFT if it is a factor of an irreducible SFT where the factor map is one-to-one on a nonempty open set).
\end{proof}

The cover $\pi: X\to Y$ satisfying Lemma \ref{near-markov-minimal-cover}
is called the (unique) {\em minimal cover} of $Y$.

Let $\hat{Y}, Y$ be subshifts and $Y_0$ be a finite subshift of $Y$. We say that a factor map $\pi: \hat{Y}\to Y$ is a {\em blowup} of $Y$ at $Y_0$ if any $y\notin Y_0$ has a unique preimage and any $y\in Y_0$ has finitely many preimages. With $Q:=Y_0$, $P:=\pi^{-1}(Y_0)$ and $f:=\pi\vert_P$, we say that $\pi$ is an {\em $f$-blowup} of $Y$. Sometimes we also say $\hat{Y}$ is an $f$-blowup of $Y$ when there is no confusion.
Note that blowups are right- and left-closing.

The following is an immediate consequence of the definition of blowup and 
Lemma~\ref{near-markov-minimal-cover}.

\begin{corollary}\label{near-markov-minimal-cover2} 
The unique minimal cover of a near Markov shift $Y$ is a blowup of $Y$ and is finite-to-one. 
\end{corollary}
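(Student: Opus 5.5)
Let $Y$ be near Markov. By definition there is an irreducible SFT $W$ and a factor code $\psi: W\to Y$ such that the set $D=\{y\in Y: |\psi^{-1}(y)|\ge 2\}$ is finite. Let $\phi: X\to Y$ be the minimal cover from Lemma~\ref{near-markov-minimal-cover}. Since $\psi$ is an irreducible SFT cover, the lemma gives a factor code $\tau: W\to X$ with $\psi=\phi\circ\tau$. For $y\notin D$ we get $|\phi^{-1}(y)|\le|\psi^{-1}(y)|=1$, because $\tau$ maps $\psi^{-1}(y)$ onto $\phi^{-1}(y)$. Surjectivity of $\phi$ then gives exactly one preimage. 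So $\phi$ is injective off the finite set $Y_0:=D$. This set is shift-invariant, since $\sigma$ commutes with $\psi$ and is bijective on fibers. A finite shift-invariant set is closed, so $Y_0$ is a finite subshift. If $D$ is empty, $\phi$ is a conjugacy and we may take $Y_0=\emptyset$, which is still a blowup.

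Next I would show that each $y\in Y_0$ has finitely many $\phi$-preimages. The point $y$ is periodic, say $\sigma^p y=y$, so $\sigma^p$ permutes the fiber $\phi^{-1}(y)$. The first approach is the general fact that a factor code from an irreducible SFT onto a sofic shift of the same entropy is finite-to-one. Here $h(X)=h(Y)$: the factor code from $W$ gives $h(X)\le h(W)$, and $h(W)=h(Y)$ because $\psi$ is one-to-one off a finite set, so it is one-to-one on a dense $G_\delta$ set and in particular is not entropy-decreasing for an irreducible SFT (cf.\ \cite[Cor.~8.1.20]{LM}). Alternatively I would argue directly. Finite-to-one is equivalent to having no diamonds. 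A diamond in $X$ over $Y$ yields, by the standard pumping argument in an irreducible SFT, uncountably many points of $Y$ with at least two preimages. That contradicts the finiteness of $D$, using that $\phi$ has at most one preimage off $D$.

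Combining these, every $y\notin Y_0$ has a unique preimage and every $y\in Y_0$ has finitely many, so $\phi$ is a blowup of $Y$ at $Y_0$. Finite-to-one holds everywhere.

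The main obstacle is the finiteness of the fibers over $Y_0$, namely the no-diamond argument. The key point is that a diamond can be spliced into an orbit-rich context. Irreducibility of the SFT $X$ lets one insert the diamond's two paths into infinitely many distinct bi-infinite sequences that differ far from the diamond. This produces infinitely many images with two preimages, and that contradicts $D$ being finite.
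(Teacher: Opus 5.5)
Your first step is exactly what the paper means when it calls this corollary an ``immediate consequence'' of Lemma~\ref{near-markov-minimal-cover}. You factor the near Markov cover as $\psi=\phi\circ\tau$ and note that $\phi^{-1}(y)=\tau(\psi^{-1}(y))$. It follows that $\phi$ is injective off the finite shift-invariant set $D$.

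The paper does not spell out why the fibers over $Y_0$ are finite (this comes with the minimal cover of \cite{BKM}). So your second step is extra detail rather than a different route, and it can be shortened. The same identity gives $|\phi^{-1}(y)|\le|\psi^{-1}(y)|$ for every $y$, so it suffices to show that $\psi$ is finite-to-one. For that, the diamond argument applied directly to $\psi$ is enough. If $\psi$ had a diamond, splicing it into a point of $W$ with dense forward orbit would give a point of $Y$ with dense forward orbit and two preimages. Such a point is not periodic, so it cannot lie in the finite invariant set $D$. This also turns your vague ``one-to-one on a dense $G_\delta$'' sentence into a precise one, and you never need to compute $h(X)$.

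There is, however, a case your argument does not cover: both of your finiteness arguments tacitly assume $Y$ is infinite. Under the paper's definition, a single periodic orbit is near Markov via \emph{any} cover, e.g.\ $W=\{0,1\}^{\mathbb{Z}}$ with $\psi$ collapsing $W$ onto a fixed point. In that example:
\begin{itemize}
\item $D=Y$, so ``$\psi$ is one-to-one off a finite set'' is vacuous;
\item $h(W)=\log 2>0=h(Y)$, so your claim $h(W)=h(Y)$ is false;
\item a diamond cannot produce uncountably many points of the one-point space $Y$, so your no-diamond argument says nothing about $X$.
\end{itemize}
The fix is one line. If $Y$ is finite, it is a periodic orbit and hence itself an irreducible SFT, so $\mathrm{id}_Y$ factors through $\phi$ by the lemma. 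This gives $\phi\circ\tau'=\mathrm{id}_Y$ with $\tau'$ onto. Then $\tau'$ is also injective, so $\phi=\tau'^{-1}$ is a conjugacy, which is a blowup at $Y_0=\emptyset$. With that case added, your proof is complete.
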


For a positive integer $k$, by a {\em $k$-blowup}, we mean an $f$-blowup 
where $f:P\to Q$, $Q$ is a fixed point and $P$ is a single periodic orbit with least period $k$.


The {\em global period} of an irreducible sofic shift $Y$ is defined to be any of the eight equivalent definitions in \cite[Proposition 5.1]{MMTW}, one of which is the unique number $p$ such that there is a decomposition $Y = \sqcup_{i=0}^{p-1} Y_i$ of $Y$ into closed sets, such that $Y_i$'s can only intersect on the boundary, $\sigma(Y_i) = Y_{i+1}$ with addition mod $p$, and $\sigma^p$ is mixing on each $Y_i$. Such a decomposition is unique up to cyclic permutation and it is called the {\em canonical cyclic decomposition} of $Y$. When $Y$ is an irreducible SFT, this $p$ is the same as the greatest common devisor  of the periods of periodic points in $Y$.

Also, as defined in \cite[Definition 3.6]{MMTW}, a subshift $Y$ is said to be {\em $q$-periodic} if there is a partition $Y = \biguplus_{i=0}^{q-1} Y_i$ of $Y$ into clopen sets $Y_i$ such that $\sigma(Y_i) = Y_{i+1}$ with addition mod $q$. Such a partition is referred to as a {\em cyclic partition} of $Y$. Note that $Y$ can be $q$-periodic for more than one $q$. In particular, any subshift is $1$-periodic.

An irreducible SFT with global period $p$ is $q$-periodic if and only if $q$ divides $p$. 
Only one direction of this holds in the sofic case: if an irreducible sofic shift with global period $p$ is $q$-periodic, then $q$ divides $p$ (see Proposition \ref{q-periodic-characterization-by-disjointness}), but the converse need not hold. We refer the reader to \cite[Section 5]{MMTW} for more discussion on the global period and $p$-periodicity.

Throughout the paper, we use $\per(p)$ to denote the least period of a peirodic point $p$.

We also need the concept of fiber product.

Let $X, Y, Z$ be shift spaces, and $\phi_X: X\to Y$ and $\phi_Z: Z\to Y$ be sliding block codes. The {\em fiber product} of $(\phi_X, \phi_Y)$ is the triple $(W, \psi_X, \psi_Z)$ where 
$$
W:= \{(x,z)\in X\times Z: \phi_X(x)=\phi_Z(z)\},
$$
$\psi_X(x,z)=x$ and $\psi_Z(x,z)=z$.
The maps $\psi_X$ and $\psi_Z$ are called the {\em arms}~\footnote{The maps $\phi_X$ and $\phi_Z$ are usually called the legs of the fiber product.} of the fiber product.  The following diagram gives a picture of the fiber product. 
\begin{equation} 
 	\xymatrix{
 		& W \ar[dl]_{\psi_X} \ar[dr]^{\psi_Z}& \\
        X \ar[dr]_{\phi_X}  & & Z \ar[dl]^{\phi_Z}  \notag\\
 		&	Y & }
 \end{equation}

The following proposition says some properties of $\psi_X$ and $\psi_Z$ can be inherited from $\phi_Z$ and $\phi_X$, respectively.

\begin{proposition}[{\cite[Proposition 8.3.3]{LM}}]\label{fibre}
    Let $X, Y, Z$ be subshifts and $(W, \psi_X, \psi_Z)$ be the fiber product of $\phi_X: X\to Y$ and $\phi_Z: Z\to Y$. For each of the following properties, if $\phi_X$ (resp. $\phi_Z$) satisfies the property, then so does $\psi_{Z}$ (resp. $\psi_X$).
    \begin{enumerate}
        \item one-to-one.
        \item onto.
        \item right-resolving.
        \item right-closing.
        \item left-resolving.
        \item left-closing.
        \end{enumerate}
\end{proposition}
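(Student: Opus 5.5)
The plan is to work directly with the concrete description of the fiber product, $W=\{(x,z)\in X\times Z:\phi_X(x)=\phi_Z(z)\}$. By symmetry it suffices to prove each item for $\phi_Z$ and $\psi_X$: if $\phi_Z$ has the property, then so does $\psi_X(x,z)=x$. First, $W$ is a subshift: it is closed, since both $\phi_X$ and $\phi_Z$ are continuous, and it is shift-invariant, since both maps commute with $\sigma$. The arms are restrictions of coordinate projections, so they are sliding block codes.

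\textbf{One-to-one and onto.} Suppose $\phi_Z$ is injective and $\psi_X(x,z)=\psi_X(x,z')$. Then $\phi_Z(z)=\phi_X(x)=\phi_Z(z')$, so $z=z'$. Suppose instead $\phi_Z$ is onto and $x\in X$. Pick $z$ with $\phi_Z(z)=\phi_X(x)$; then $(x,z)\in W$ and it maps to $x$.

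\textbf{Right-closing.} Suppose $(x,z)$ and $(x',z')$ are left-asymptotic in $W$ and have the same image under $\psi_X$, so $x=x'$. Then $z,z'$ are left-asymptotic in $Z$, and $\phi_Z(z)=\phi_X(x)=\phi_Z(z')$. Right-closingness of $\phi_Z$ gives $z=z'$. The left-closing case is identical with right-asymptotic in place of left-asymptotic.

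\textbf{Right- and left-resolving.} These require that $\phi_X$ and $\phi_Z$ be $1$-block codes, $\Phi_X$ and $\Phi_Z$. I would present $W$ as a $1$-step subshift over the alphabet of pairs $(a,c)$ with $\Phi_X(a)=\Phi_Z(c)$, so that $\psi_X$ is the $1$-block code $(a,c)\mapsto a$. Now suppose $(a,c)(b,d)$ and $(a,c)(b',d')$ are allowed $2$-blocks of $W$ with $b=b'$. Then $cd$ and $cd'$ are allowed $2$-blocks in $Z$, and
\[
\Phi_Z(d)=\Phi_X(b)=\Phi_Z(d').
\]
Right-resolvingness of $\phi_Z$ forces $d=d'$. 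The left-resolving case is symmetric.

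The only real subtlety is this resolving step. One must confirm that a $2$-block of $W$ projects to allowed $2$-blocks of $Z$, which is immediate because $\psi_Z$ maps $W$ into $Z$. One must also confirm that the alphabet of $W$ is taken as the set of symbols actually occurring, so that the $1$-block description is legitimate. Everything else is routine.
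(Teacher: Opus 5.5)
Your proposal is correct and is essentially the standard direct verification from the definition of $W$ that the paper relies on; the paper gives no proof and cites \cite[Proposition 8.3.3]{LM}. You also correctly flag that the resolving items presuppose both legs are $1$-block codes. One harmless slip: $W$ need not be a $1$-step subshift (for instance, when $\phi_X=\phi_Z$ is the identity on a non-SFT sofic shift), but your resolving argument only uses that $W$ is a subshift over the pair alphabet on which $\psi_X$ is the $1$-block map $(a,c)\mapsto a$, so nothing breaks.
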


\section{An embedding theorem for near Markov sofic shifts} \label{sec:embedding_for_near_markov}

    
 The main result in this section is Theorem \ref{embedding_into_near_markov} below, which gives a characterization of subshifts that can be embedded into an (irreducible) near Markov shift.

The following technical lemma is needed in proving our main theorem.


\begin{lemma}\label{lem:decending_embeddings}
Let $\hat Z,Z,\hat Z_0, Z_0, \hat Y, Y,\hat Y_0, Y_0$ be compact metrizable topological spaces so that $Z_0 \subseteq Z$, $Y_0 \subseteq Y$. Further, suppose that:
\begin{itemize}
	\item $\pi_Z:\hat Z \to Z$ and $\pi_Y:\hat Y \to Y$ are continuous surjective maps.
	\item $\hat \rho: \hat Z \to \hat Y$ is a {continuous injective map}.
	\item $\hat Z_0 = \pi^{-1}_Z(Z_0)$ and $\hat Y_0 = \pi^{-1}_Y(Y_0)$.
	\item $\hat \rho^{-1}(\hat Y_0) \subseteq \hat Z_0$.
	\item $\pi_Z$ and $\pi_Y$ are injective on $\hat Z \setminus \hat Z_0$ and $\hat Y \setminus \hat Y_0$ respectively.
	\item There exists a {continuous injective map} $\rho_0:Z_0 \to Y_0$.
	\item  
	$\rho_0 \circ \pi_Z (\hat z)= \pi_Y \circ\hat \rho(\hat z)$ for every $\hat z \in \hat Z_0$
	\end{itemize}
Then there exists a unique continuous injective map $\rho:Z \to Y$ that extends $\rho_0:Z_0 \to Y_0$ and satsifies $\rho \circ\pi_Z=\pi_Y \circ \hat \rho$. 

Moreover, if the spaces involved are subshifts and the given maps are all shift commuting, 
then  $\rho$ is also shift commuting.
\end{lemma}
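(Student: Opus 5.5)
Since $\pi_Z$ is surjective, the only way to satisfy $\rho\circ\pi_Z=\pi_Y\circ\hat\rho$ is to set $\rho(z):=\pi_Y(\hat\rho(\hat z))$ for any $\hat z\in\pi_Z^{-1}(z)$. This already gives uniqueness. For existence I will check, in order, that this formula is well defined, that it extends $\rho_0$, that it is continuous, that it is injective, and finally that it commutes with the shift.

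\emph{Well-definedness and extension.} Take $z\in Z$. If $z\notin Z_0$, then $\pi_Z^{-1}(z)$ lies in $\hat Z\setminus\hat Z_0$, because $\hat Z_0=\pi_Z^{-1}(Z_0)$. Since $\pi_Z$ is injective there, $z$ has exactly one preimage and nothing needs checking. If $z\in Z_0$, every preimage $\hat z$ lies in $\hat Z_0$, and the last hypothesis gives $\pi_Y(\hat\rho(\hat z))=\rho_0(\pi_Z(\hat z))=\rho_0(z)$. So $\rho$ is well defined and equals $\rho_0$ on $Z_0$.

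\emph{Continuity.} All spaces are compact Hausdorff, so $\pi_Z$ is a closed quotient map. Let $F\subseteq Y$ be closed. Then
\[
\pi_Z^{-1}(\rho^{-1}(F))=(\pi_Y\circ\hat\rho)^{-1}(F),
\]
which is closed in $\hat Z$ and hence compact. Its image under $\pi_Z$ is $\rho^{-1}(F)$, by surjectivity of $\pi_Z$, and this image is compact and therefore closed. So $\rho$ is continuous.

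\emph{Injectivity.} Suppose $\rho(z)=\rho(z')$. I split into three cases.
\begin{itemize}
\item If $z,z'\in Z_0$, then $\rho_0(z)=\rho_0(z')$, and injectivity of $\rho_0$ gives $z=z'$.
\item If $z\notin Z_0$, choose $\hat z$ with $\pi_Z(\hat z)=z$; then $\hat z\notin\hat Z_0$. The hypothesis $\hat\rho^{-1}(\hat Y_0)\subseteq\hat Z_0$ gives $\hat\rho(\hat z)\notin\hat Y_0$, so $\rho(z)=\pi_Y(\hat\rho(\hat z))\notin Y_0$.
\item In the mixed case $z\notin Z_0$, $z'\in Z_0$, the previous case gives $\rho(z)\notin Y_0$, while $\rho(z')=\rho_0(z')\in Y_0$. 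So $\rho(z)\neq\rho(z')$ and this case cannot occur.
\end{itemize}
It remains to treat $z,z'\notin Z_0$. Both $\hat\rho(\hat z)$ and $\hat\rho(\hat z')$ lie in $\hat Y\setminus\hat Y_0$, where $\pi_Y$ is injective, so $\hat\rho(\hat z)=\hat\rho(\hat z')$. Injectivity of $\hat\rho$ then gives $\hat z=\hat z'$, hence $z=z'$.

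\emph{Shift commutation.} When all maps commute with the shift, fix $z$ and a preimage $\hat z$. Then $\sigma\hat z$ is a preimage of $\sigma z$, so
\[
\rho(\sigma z)=\pi_Y\hat\rho(\sigma\hat z)=\sigma\,\pi_Y\hat\rho(\hat z)=\sigma\rho(z).
\]

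The argument is elementary throughout. The step needing the most care is injectivity, specifically the mixed case. That is the one place where the hypothesis $\hat\rho^{-1}(\hat Y_0)\subseteq\hat Z_0$ is used, to keep $\rho(Z\setminus Z_0)$ disjoint from $Y_0$.
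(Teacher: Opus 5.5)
Your proof is correct and follows the paper's overall plan: push $\pi_Y\circ\hat\rho$ down along $\pi_Z$, obtain uniqueness from the surjectivity of $\pi_Z$, and prove injectivity by the same dichotomy. The paper splits on whether $\rho(z)\in Y_0$ and you split on whether $z\in Z_0$; both use $\hat\rho^{-1}(\hat Y_0)\subseteq\hat Z_0$ to get $\rho^{-1}(Y_0)=Z_0$.

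The differences are in two steps.
\begin{itemize}
\item \emph{Continuity.} The paper defines $\rho$ through a discontinuous section $\psi$ of $\pi_Z$ and argues with sequences. It uses continuity of $\psi$ on $Z\setminus Z_0$, which needs injectivity of $\pi_Z$ off $\hat Z_0$. At points $z\in Z_0$ it uses that $\psi(z^{(n)})$ approaches the fibre $\pi_Z^{-1}(z)$. Your closed-map (quotient) argument is shorter. It uses only that $\pi_Y\circ\hat\rho$ is continuous and constant on the fibres of $\pi_Z$, and it works for compact Hausdorff spaces without metrizability.
\item \emph{Shift-commutation.} You check this directly from the well-defined formula, whereas the paper deduces it from uniqueness.
\end{itemize}
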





\begin{figure}[H]
\begin{center}
\begin{tikzcd}[row sep=large, column sep=large]
\hat Z_0 \arrow[rr,hook] \arrow[dd,two heads] \arrow[dr, hook] & & \hat Y_0 \arrow[dd,two heads] \arrow[dr,hook] \\
& \hat Z \arrow[rr, crossing over,hook," \hat \rho" xshift=-10mm] \arrow[dd, crossing over,two heads,"\pi_Z" yshift=10mm] & & \hat Y \arrow[dd,two heads,"\pi_Y"] \\
Z_0 \arrow[rr,hook,"\rho_0" xshift=10mm] \arrow[dr,hook] & & Y_0 \arrow[dr,hook] \\
& Z \arrow[rr, hook,"\exists ! \rho"] & & Y
\end{tikzcd}
\end{center}
\caption{The diagram illustrating the relationships among the various maps in  Lemma \ref{lem:decending_embeddings}.  The arrows in the diagram are of two types: {continuous injective maps}, which have one arrowhead at the head and a hook at the tail, and {continuous surjective maps}, which have two arrowheads at the head. }
\end{figure}

\begin{proof}
Using surjectivity of $\pi_Z$, we can find a  section  $\psi:Z \to \hat Z$ for $\pi_Z:\hat Z \to Z$. By this we mean that $\pi_Z \circ \psi=\mathit{Id}_Z$. {Note that $\psi(Z_0)\subset \hat{Z_0}$.} Of course, we do not (and cannot) claim that  $\psi$ is continuous. However, because $\pi_Z$ is injective on the open set $\hat Z \setminus \hat Z_0$, and $\hat Z_0$ is compact, it follows that $\psi$ is continuous on $Z \setminus Z_0$.

{Define $\rho: Z\to Y$ by
$$
\rho:= \pi_Y \circ \hat{\rho} \circ \psi.
$$

We claim that $\rho$ is a continuous injective map of $Z$ into $Y$, that moreover $\rho$ is the unique function from $Z$ to $Y$ that extends $\rho_0$ and satisfies $\rho \circ\pi_Z=\pi_Y \circ \hat \rho$.

First, since $\rho_0$ satisfies  $\rho_0 \circ \pi_Z = \pi_Y \circ\hat \rho$ on $ \hat{Z_0}$ and $\psi(Z_0)\subset \hat{Z_0}$, we have $\rho_0=\pi_Y \circ \hat{\rho}\circ \psi$ on $Z_0$. Thus, $\rho$ is an extension of $\rho_0$.
}



            
We now show that $\rho \circ\pi_Z (\hat{z})=\pi_Y \circ \hat \rho(\hat{z})$ for all $\hat{z}\in \hat{Z}$. 
Indeed, if $\hat{z}\in \hat{Z}_0$, then $\pi_Z(\hat{z})\in Z_0$. Thus, $\rho \circ\pi_Z (\hat{z})=\rho_0 \circ \pi_Z(\hat{z})=\pi_Y \circ \hat \rho(\hat{z})$ where the last equality follows from the last bullet in the statement of the lemma.
If $\hat{z}\in \hat{Z}\setminus \hat{Z}_0$, then 
$\pi_Z(\hat{z})\in Z\setminus Z_0$ and therefore
\[
\rho \circ \pi_Z (\hat{z}) = (\pi_Y \circ \hat \rho \circ \psi) \circ \pi_Z (\hat{z})= \pi_Y \circ \hat \rho \circ ( \psi \circ \pi_Z)(\hat{z}) = \pi_Y \circ \hat \rho(\hat{z})\]
where the last equality follows from the fact that $\psi \circ \pi_Z= \mathit{Id}$ on $\hat{Z}\setminus \hat{Z}_0$. 




We now check that $\rho:Z \to Y$ is injective: Suppose that $z_1,z_2 \in Z$ and $\rho(z_1)=\rho(z_2):=y \in Y$. 
For $i=1,2$ let $\hat z_i=\psi(z_i) \in \hat Z$ and  $\hat y_i= \hat \rho(\hat z_i) \in \hat Y$. Note that $\pi_Y(\hat{y}_i)=y_i$ by the definition of $\rho$.

We first consider the case  that $y \in Y_0$. Then $\hat y_1,\hat y_2 \in \pi^{-1}(Y_0) = \hat Y_0$. Since $\hat \rho^{-1}(\hat Y_0)\subseteq \hat Z_0$ it follows that $\hat z_1,\hat z_2 \in \hat Z_0$. Since $\hat Z_0=\pi_Z^{-1}(Z_0)$ and $\pi_Z(\hat z_i)=z_i$ for $i=1,2$, it follows that $z_1,z_2 \in Z_0$. Thus, because $\rho$ coincides with $\rho_0$ on $Z_0$, it follows that $y=\rho_0(z_1)=\rho_0(z_2)$. By the injectivity of $\rho_0$ it follows that $z_1=z_2$ in this case.
			
Now consider the remaining case that $y \in Y \setminus Y_0$. Then $\hat y_1,\hat y_2 \in \pi_Y^{-1}(Y\setminus Y_0) = \hat Y \setminus  \hat Y_0$. Since $\pi_Y$ is injective on $\hat Y \setminus \hat Y_0$, it follows that $\hat y_1 = \hat y_2$. Since $\hat \rho^{-1}(\hat Y_0)\subseteq \hat Z_0$ it follows that $\hat z_1,\hat z_2 \in \hat Z \setminus \hat Z_0$. Also, because $\hat \rho$ is injective, it follows that $\hat z_1= \hat z_2$. Because $\psi$ is injective on $Z\setminus Z_0$, it follows that $z_1=z_2$ in this case as well. This completes the proof that $\rho$ is injective.

We now prove that $\rho$ is continuous.
Since $\psi$ is continuous on $Z \setminus Z_0$ and $Z \setminus Z_0$ is open, it follows that $\rho$ is continuous on $Z \setminus Z_0$. So it remains to prove that $\rho$ is continuous on $Z_0$. Fix $z \in Z_0$, and let $(z^{(n)})_{n=1}^\infty$ be a sequence in $Z$ such that $z =\lim_{n \to \infty}z^{(n)}$. We claim that $\rho(z^{(n)})$ is close to $\rho(z)$ when $n$ is large enough. 
				
To see this, denote $E_z:= \pi_Z^{-1} (z)$. Since $\pi_Y\circ \hat{\rho}= \rho \circ \pi_Z$, we have 
$$
\pi_Y\circ \hat{\rho} (E_z) = \rho\circ \pi_Z (E_z) = \{\rho(z)\}.
$$
Now consider $\rho(z^{(n)})$, i.e., $\pi_Y \circ \hat{\rho} \circ \psi (z^{(n)})$.	
Even though  $\psi({z}^{(n)})$ may not converge, we deduce from the continuity of $\pi_X$ that $\psi({z}^{(n)})$ is close to $E_z$ for large enough $n$. Now, by continuity of $\hat{\rho}$ and $\pi_Y$, we know that $\pi_Y\circ \hat{\rho}\circ \psi({z}^{(n)})$  is close to the set $\pi_Y\circ \hat{\rho} (E_z)$ for large enough $n$. 
Since $\pi_Y\circ \hat{\rho}(E_z) = \{\rho(z)\}$, we conclude that $\rho(z^{(n)})$ is close to $\rho(z)$ for large enough $n$, proving the claim.

For the uniqueness of $\rho$, note that any $\rho'$ satisfying $\rho' \circ \pi_Z = \pi_Y \circ \hat{\rho}$ must satisfy $\rho'= \rho'\circ\pi_Z\circ\psi = \pi_Y \circ \hat{\rho} \circ \psi$, which coincides with our definition of $\rho$.
Thus, $\rho$ is unique.

 Finally, for the moreover part, assume that the spaces involved are subshifts, that that all the given maps are shift commuting,
 and that $\rho$ is the unique continuous injective extension of $\rho_0$ with $\rho\circ \pi_Z = \pi_Y \circ \hat{\rho}$. 
 Then 
 $$
 \sigma \rho  \sigma^{-1} \pi_Z = \sigma \rho \pi_Z \sigma^{-1}  = 
  \sigma \pi_Y \hat{\rho} \sigma^{-1} = \pi_Y \hat{\rho}
 $$
 By uniqueness of $\rho$,
 $\sigma \rho \sigma^{-1} =\rho$, 
 and so $\rho$ is shift commuting. 
\end{proof}
	

        Let $Y$ be a near Markov shift. By Lemma~\ref{near-markov-minimal-cover} and Corollary~\ref{near-markov-minimal-cover2}, $Y$ has a  unique minimal cover which is a blowup, $\pi_Y: \hat{Y} \to Y$, at a finite subshift $Y_0$ of $Y$,   with $\hat{Y}$  an irreducible SFT. Denote $\hat{Y}_0:= \pi_Y^{-1}(Y_0)$ and $\pi_0 = \pi_Y|_{\hat{Y}_0}$. So, $\pi_Y$ is a $\pi_0$-blowup. 
\begin{theorem}
\label{embedding_into_near_markov}
Let $Y$ be an (irreducible) near Markov shift with $\hat{Y}, Y_0, \hat{Y}_0$  given as above.
Let $Z$ be a subshift. Then $Z$ embeds into $Y$ iff either 
        \begin{enumerate}
\item $Z$ is conjugate to $Y$ --or--
\item $h(Z)<h(Y)$ and there exists a finite subshift $Z_0\subseteq Z$, a finite subshift $\hat{Z}_0$, a factor code $f_0: \hat{Z}_0 \to Z_0$ such that
    \begin{enumerate}
\item   there are  embeddings $\rho_0: Z_0\hookrightarrow Y_0$ and $\hat{\rho_0}: \hat{Z_0}\hookrightarrow \hat{Y_0}$ s.t. the following diagram commutes:
\begin{equation}\label{commdiag}
\begin{xymatrix}{
\hat{Z}_0\ar[d]_-{f_0}\ar@{^{(}->}[rr]^-{\hat{\rho_0}} & &\hat{Y}_0\ar[d]^-{\pi_0} \\
Z_0 \ar@{^{(}->}[rr]^-{\rho_0} & & Y_0 }
\end{xymatrix}
\end{equation}
\item There exists an $f_0$-blowup $\pi_Z: \hat{Z}\to Z$ at $Z_0$.
\item $q_n({Z}\setminus {Z}_0)\leq q_n({Y}\setminus{Y_0})$ for all $n$~ \footnote{Here, we continue to use $q_n(Z\setminus Z_0)$ to denote the number of periodic points with least period $n$ in $Z\setminus Z_0$, though $Z\setminus Z_0$ may not be a subshift. The same convention applies to $q_n(Y\setminus Y_0)$.}.        
\item $\hat{Z}$ is $p$-periodic where $p$ is the global period of $\hat{Y}$.
    \end{enumerate}
			\end{enumerate}
Moreover, when $Z$ is irreducible, then $\hat{Z}$ above can also be taken to be irreducible.
\end{theorem}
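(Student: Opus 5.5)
We prove both directions, with Lemma~\ref{lem:decending_embeddings} doing the work in sufficiency and a fiber product argument doing the work in necessity.

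\emph{Sufficiency.} If $Z$ is conjugate to $Y$ there is nothing to prove, so assume (2). The plan is to embed $\hat Z$ into the irreducible SFT $\hat Y$ by an embedding $\hat\rho$ that extends $\hat\rho_0$ on $\hat Z_0$ and satisfies $\hat\rho^{-1}(\hat Y_0)\subseteq \hat Z_0$. Granting this, Lemma~\ref{lem:decending_embeddings}, with $\pi_Z$, $\pi_Y$, $\rho_0$, $\hat\rho$ and $\hat Z_0=\pi_Z^{-1}(Z_0)$, gives a shift-commuting continuous injection $\rho:Z\to Y$, and we are done. The commutativity hypothesis of the lemma is exactly diagram (\ref{commdiag}), since $\pi_Z|_{\hat Z_0}=f_0$ and $\pi_Y|_{\hat Y_0}=\pi_0$.

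To build $\hat\rho$ we use a version of Krieger's theorem with a prescribed embedding on a finite periodic subsystem, applied to the irreducible SFT $\hat Y$ of period $p$. The inputs are as follows.
\begin{enumerate}
\item $h(\hat Z)=h(Z)<h(Y)=h(\hat Y)$, because blowups are finite-to-one.
\item Every periodic orbit in $\hat Z\setminus\hat Z_0$ maps bijectively by $\pi_Z$ onto one in $Z\setminus Z_0$, and likewise for $Y$. So (c) gives $q_n(\hat Z\setminus\hat Z_0)\le q_n(\hat Y\setminus\hat Y_0)$. This is exactly the room needed to send the remaining periodic orbits into $\hat Y\setminus \hat Y_0$ while $\hat Z_0$ goes to $\hat\rho_0(\hat Z_0)\subseteq\hat Y_0$.
\item Condition (d) is the periodicity condition needed for Krieger's theorem for irreducible (non-mixing) SFTs. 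We pass to $\sigma^p$ on a cyclic component of $\hat Y$, which is a mixing SFT, and on the matching clopen piece of the cyclic partition of $\hat Z$.
\end{enumerate}
The extension version of Krieger's theorem requires a standard marker construction. It is the same as Krieger's proof, except that the periodic part is handled by the fixed $\hat\rho_0$ on $\hat Z_0$ together with injective assignments of the other low-period orbits avoiding $\hat Y_0$. We also must make sure that the non-periodic part of the image avoids $\hat Y_0$; this is automatic, since $\hat Y_0$ is finite and consists of periodic points. We expect this to be the main technical obstacle: the point is to keep the image of points near, but not in, $\hat Z_0$ out of $\hat Y_0$ while gluing with markers in the non-mixing setting.

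\emph{Necessity.} Let $\rho:Z\hookrightarrow Y$. If $\rho$ is onto, (1) holds. Otherwise $\rho(Z)$ is a proper subshift of the irreducible sofic $Y$, so $h(Z)<h(Y)$. We pull back the minimal cover by a fiber product: $\hat Z:=\{(z,\hat y): \rho(z)=\pi_Y(\hat y)\}$, with arm $\pi_Z:\hat Z\to Z$ and arm $\hat\rho:\hat Z\to\hat Y$. By Proposition~\ref{fibre}, $\hat\rho$ is injective because $\rho$ is. Set $Z_0=\rho^{-1}(Y_0)$, $\hat Z_0=\pi_Z^{-1}(Z_0)$, $f_0=\pi_Z|_{\hat Z_0}$, $\rho_0=\rho|_{Z_0}$ and $\hat\rho_0=\hat\rho|_{\hat Z_0}$. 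Then $\pi_Z$ is an $f_0$-blowup because $\pi_Y$ is a blowup, which gives (a) and (b). Condition (c) follows since $\rho$ injects $Z\setminus Z_0$ into $Y\setminus Y_0$. For (d), $\hat\rho(\hat Z)$ is a subshift of $\hat Y$, and $\hat Y$ has a cyclic partition into $p$ clopen sets; intersecting it with $\hat\rho(\hat Z)$ and pulling back by $\hat\rho$ gives the required cyclic partition of $\hat Z$.

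\emph{The moreover clause.} If $Z$ is irreducible, we replace $\hat Z$ by an irreducible component that maps onto $Z$, for instance the closure of the orbit of the unique preimage of a transitive point not in $Z_0$. This component still maps onto $Z$ as a blowup. Its fiber over $Z_0$ may shrink, so we redefine $\hat Z_0$, $f_0$ and $\hat\rho_0$ as the restrictions to it; in the sufficiency direction we restrict likewise.
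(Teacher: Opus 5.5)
\textbf{Where the proof breaks.} Your necessity argument and your treatment of the moreover clause essentially match the paper: the fiber product with $Z_0=\rho^{-1}(Y_0)$, pulling back the cyclic partition of $\hat Y$ along $\hat\rho$, and an irreducible lift of a transitive point. Your sufficiency outline is also the paper's route. The gap is in your step (3). To reduce to $\sigma^p$ on ``the matching clopen piece'' while extending the prescribed $\hat\rho_0$, you need cyclic partitions $\hat Z=\biguplus_{i=0}^{p-1}\hat Z^{(i)}$ and $\hat Y=\biguplus_{i=0}^{p-1}\hat Y^{(i)}$ with $\hat\rho_0(\hat Z_0\cap\hat Z^{(i)})\subseteq\hat Y^{(i)}$ for every $i$.

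This compatibility is necessary: any embedding $\hat\rho$ extending $\hat\rho_0$ produces it via $\hat Z^{(i)}=\hat\rho^{-1}(\hat Y^{(i)})$. For $p>1$, however, it does not follow from (a) and (d). Condition (d) only says that $\hat Z$ admits \emph{some} cyclic partition, and (a) allows $\hat\rho_0$ to send different orbits of $\hat Z_0$ into $\hat Y_0$ with inconsistent phases. The paper's ``simple modification'' (that $\hat\rho$ can be taken to extend any embedding of $\hat Z_0$ into $\hat Y_0$) has exactly the same hole. In fact the sufficiency direction is false as stated.

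The repair is to strengthen (d): require a cyclic $p$-partition of $\hat Z$ compatible with $\hat\rho_0$ in the above sense. The necessity proof produces such a partition, and with it your reduction to $\sigma^p$ on pieces goes through. For $p=1$ the condition is vacuous, so your argument is fine when $\hat Y$ is mixing, e.g.\ for Proposition~\ref{embedding_even}.

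\textbf{Counterexample with $p=2$.} Let $\hat Y$ be the vertex shift of an irreducible bipartite graph with parts $E,O$. It contains two $2$-cycles $u_1\leftrightarrow v_1$ and $u_2\leftrightarrow v_2$ with $u_1,u_2\in E$. The $j$-th cycle is entered only by an edge $w_j\to u_j$ and left only by an edge $v_j\to x_j$, and the cycles are connected through a large complete bipartite hub. Label $u_1,u_2$ by $\alpha$, $v_1,v_2$ by $\beta$, and every other vertex by its own symbol. This gives the following.
\begin{enumerate}
\item[(i)] $Y$ is near Markov with minimal cover $\hat Y$; merging the two cycles would create the forbidden words $L(w_1)(\alpha\beta)^kL(x_2)$.
\item[(ii)] $Y_0$ is the orbit of $y=(\alpha\beta)^\infty$, with $\alpha$ at even coordinates.
\item[(iii)] The two lifts $a=(u_1v_1)^\infty$ and $b=(u_2v_2)^\infty$ of $y$ lie in the \emph{same} piece.
\end{enumerate}

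Build $\hat Z$ from the same gadget, but with a small connecting part instead of the hub, and with $u_1'\in E'$, $u_2'\in O'$.
\begin{enumerate}
\item[(iv)] The labelling gives an $f_0$-blowup $\pi_Z\colon\hat Z\to Z$ at $Z_0=\{z,\sigma z\}$, where $z=(\alpha\beta)^\infty$.
\item[(v)] The lifts $c=(u_1'v_1')^\infty$ and $d=(u_2'v_2')^\infty$ of $z$ lie in \emph{opposite} pieces.
\item[(vi)] Conditions (a)--(d) hold with $\rho_0(z)=y$, $\hat\rho_0(c)=a$, $\hat\rho_0(d)=b$; the hub takes care of $h(Z)<h(Y)$ and of (c).
\end{enumerate}

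Now take a path in $\hat Z$ from the $c$-cycle to the $d$-cycle. It projects to a non-periodic $x\in Z$ that is left-asymptotic to $z$ and right-asymptotic to $\sigma z$. An embedding $\rho$ would send $x$ to a non-periodic point of $Y$ that is left-asymptotic to $w=\rho(z)$, of least period $2$, and right-asymptotic to $\sigma w$. Lifting that point to $\hat Y$ gives a path left-asymptotic to a lift of $w$ and right-asymptotic to a lift of $\sigma w$. This is impossible, because every lift of $w$ lies in the piece opposite to every lift of $\sigma w$: trivially if $w\notin Y_0$, and because $a,b$ share a piece if $w\in Y_0$. So $Z$ does not embed into $Y$, even though (a)--(d) hold.
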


\begin{proof} 
\noindent{\bf (Necessity):}
If $Z$ is not conjugate to $Y$ and $Z$ embeds in $Y$, then $h(Z) < h(Y)$ by entropy minimality of irreducible sofic shifts~\cite[Corollary 4.4.9]{LM}.

Let $\rho: Z \to Y$ be an embedding and $Z_0 = \rho^{-1}(Y_0)$.  Let $\hat{Z}$ be the fiber product of
$\rho$ and $\pi_Y$, with arms $\pi_Z: \hat{Z} \to Z$ and $\hat{\rho}: \hat{Z} \to \hat{Y}$, i.e.,
$$
\hat{Z} = \{(z,\hat{y}) \in Z \times \hat{Y}: \rho(z) = \pi_Y(\hat{y})\}, ~\pi_Z(z, \hat{y}) = z, ~\hat{\rho}(z, \hat{y}) = \hat{y}.
$$
Then, the following diagram commutes:
\begin{equation}\label{one}
\begin{xymatrix}{
	\hat{Z}\ar[d]_-{\pi_Z}\ar@{^{(}->}[rr]^-{\hat{\rho}} & &\hat{Y}\ar[d]^-{\pi_Y} \\
	Z \ar@{^{(}->}[rr]^-{\rho} & & Y .}
\end{xymatrix}
\end{equation}

		Since $\rho$ is an embedding, by Proposition \ref{fibre}, $\hat{\rho}$ is an embedding.
		
		Let $$
		\rho_0 = \rho|_{Z_0}, \hat{Z}_0 = \pi_Z^{-1} (Z_0), \hat{\rho_0}= \hat{\rho}|_{\hat{Z}_0}.
		$$
		Then, we immediately have $\rho_0$ is an embedding from $Z_0$ into $Y_0$. Note that $\hat{\rho_0}$ is an embedding on $\hat{Z_0}$. We need to show it maps $\hat{Z_0}$ into $\hat{Y_0}$.
		Since $Z_0 = \rho^{-1}(Y_0)$, we derive from diagram (\ref{one}) that 
		\begin{equation} \label{tilde-X0-Y0}
			\hat{Z}_0 = \pi_Z^{-1} (Z_0) = \pi_Z^{-1}\circ\rho^{-1}(Y_0) = (\rho \circ \pi_Z)^{-1}(Y_0) = (\pi_Y \circ \hat{\rho})^{-1}(Y_0)
			= \hat{\rho}^{-1}(\hat{Y}_0).
		\end{equation}
		Therefore, $\hat{\rho}_0$ is an embedding from $\hat{Z}_0$ into $\hat{Y}_0$. 
        Let $f_0:= \pi_Z|_{\hat{Z}_0}$. The diagram (\ref{commdiag}) follows from the diagram (\ref{one}). Thus, (a) holds.

For part (b), note that 
$\pi_Z$ is an $f_0$-blowup because 
        $\pi_Y$ is a $\pi_0$-blowup. 

		

        Since $\pi_Z$ and $\pi_Y$ are blowups at $Z_0$ and $Y_0$ respectively, we have 
        $q_n(\hat {Z}\setminus \hat{Z}_0)=q_n(Z\setminus Z_0)$ and $q_n(\hat{Y}\setminus \hat{Y}_0)=q_n(Y\setminus Y_0)$. Then
		Part (c) follows from Equation (\ref{tilde-X0-Y0}) and the fact that $\hat{\rho}$ is an  embedding that maps $\hat{Z}\setminus \hat{Z}_0$ into $\hat{Y}\setminus  \hat{Y}_0$.
        
		Part (d) follows from the fact that  $\hat{\rho}$ is an  embedding and the necessity of the $p$-periodic condition for embedding into irreducible SFTs  \cite[Proposition 3.7]{MMTW}. 

		\medskip
		
		\noindent{\bf (Sufficiency):}		
		Assume $h(Z) < h(Y)$.
		Since $Y$ is near Markov, $\pi_Y$ is finite-to-one and  $h(\hat{Y}) = h(Y)$. 	
		
		By (a), since $Y_0$ and $\hat{Y}_0$ are finite, so are 	$Z_0$ and $\hat{Z}_0$.  Thus, by (b), $h(\hat{Z}) = h(Z)$. It follows that
		\begin{equation}
			\label{two}
			h(\hat{Z}) < h(\hat{Y}).
		\end{equation}
        By (b), $\pi_Z\vert_{\hat{Z}\setminus \hat{Z}_0}: \hat{Z}\setminus \hat{Z}_0\to Z\setminus Z_0$ is one-to-one.  By assumption, $\pi_Y\vert_{\hat{Y}\setminus \hat{Y}_0}: \hat{Y}\setminus \hat{Y}_0\to Y\setminus Y_0$ is one-to-one. Thus,  (c) implies 
        \begin{equation}\label{q_n-tildes}
            q_n(\hat{Z}\setminus \hat{Z}_0) \leq q_n(\hat{Y}\setminus \hat{Y}_0) \quad \mbox{for all} ~ n
            \end{equation}
    By (a), 
     \begin{equation}\label{q_n-tildes0}
            q_n(\hat{Z}_0) \leq q_n(\hat{Y}_0)
             \quad \mbox{for all} ~ n
            \end{equation}
    Now, Equations (\ref{two}), (\ref{q_n-tildes}), and 
    (\ref{q_n-tildes0}), together with condition (d), implies, by the embedding theorem into irreducible SFTs~\cite[Proposition 3.7]{MMTW}, that there is an embedding $\hat{\rho}$ of $\hat{Z}$ into $\hat{Y}$. Moreover, by a simple modification using Equation (\ref{q_n-tildes0}), we may assume that $\hat{\rho}$ extends any embedding of  $\hat{Z}_0$ into $\hat{Y}_0$, in particular one that is consistent with any embedding of $Z_0$ in $Y_0$ in the sense of diagram~(\ref{commdiag}).
    
		Now, apply Lemma \ref{lem:decending_embeddings} to obtain an embedding $\rho$ of $Z$ into $Y$ that extends any embedding of $Z_0$ into $Y_0$.


Finally, to prove the moreover part of the theorem, assume $Z$ is irreducible, $h(Z) < h(Y)$ and $Z$ embeds into $Y$ by an embedding $\rho$. By the necessity part of the proof we may assume that (a)-(d) hold with $\hat{Z}$ being the fiber product of  $\rho$ and $\pi_Y$. 

Let $z\in Z$ be a point with a dense forward orbit in $Z$. Choose $\hat{y}\in \hat{Y}$ to be any $\pi_Y$-preimage of $\rho(z)$. Then $u:=(z, \hat{y})\in \hat{Z}$. Let $Z'$ be the $\omega$-limit set of $u$, i.e., the set of limit points of the forward orbit of $u$. Then $Z'$ is irreducible. Since the forward orbit of $z$ is dense in $Z$, we have $\pi_Z(Z')=Z$. 

Since $\hat{Z}$ is an $f$-blowup of $Z$, 
$Z'$ is an $f'$-blowup of $Z$
where $f' = f|_{Z'}$. Since $Z' \subseteq \hat{Z}$ and $\hat{Z}$   is $p$-periodic, so is $Z'$. 
The moreover part then follows by replacing $\hat{Z}$ with $Z'$.
	\end{proof}

We conclude this section with a proof of Proposition \ref{embedding_even}, which is a statement of our main result Theorem~\ref{embedding_into_near_markov} in the special case where the target shift space $Y$ is the even shift.

\begin{proof}[{\bf Proof of Proposition \ref{embedding_even}}]
In this proposition, the unique minimal cover of $Y$ is $\pi: \hat{Y} \to Y$, where $\hat{Y}$ is the golden mean shift and $\pi$ is the 2-block factor code $\pi = \Pi_\infty$ where $\Pi(10) = 0, \Pi(01) = 0, \Pi(00) = 1$,
$Y_0$ is the single fixed point $\{0\}^\infty$ and $\hat{Y_0}$ is the orbit of $(01)^\infty$. 
We now reduce the statement of Proposition~\ref{embedding_even} to the statement of Theorem~\ref{embedding_into_near_markov}.

Case (1) of the proposition is the same as Case (1) of Theorem~\ref{embedding_into_near_markov}.

Since $\hat{Y}$ is mixing, its global period is $p=1$ and so  condition 2d of Theorem \ref{embedding_into_near_markov} is vacuous since every subshift is $1$-periodic. 

Since $Y$ has $2$ fixed points, any $Z$ that embeds in $Y$ must clearly have at most 
$2$ fixed points.

If $Z$ has at most one fixed point, then we are in 
Case (2) of the Proposition. We can choose $Z_0 = \emptyset$ in the statement of Theorem~\ref{embedding_into_near_markov}. So, in this case, conditions 2(a), 2(b) of that theorem are vacuous. And then condition 2(c) boils down to $q_n(Z) \le q_n(Y)$ for all $n \ge 2$, which we are assuming in Case (2) of the proposition, and 
$$
q_1(Z\setminus Z_0)  = q_1(Z) \le 1 = q_1(Y\setminus Y_0).
$$

If $Z$ has exactly two fixed points, then we are in 
Case (3) of the proposition. In this case $|Z_0| = 1$, so $Z_0$ is a fixed point. Since $\hat{Y_0}$ is a single orbit of length two, so is $\hat{Z_0}$.  So, conditions 2(a), 2(b) of part (2) of Theorem \ref{embedding_into_near_markov} amount to a 2-blowup at $Z_0$, which is exactly what we are assuming in Case (3) of the proposition.

And then condition 2(c) boils down to $q_n(Z) \le q_n(Y)$ for all $n \ge 2$, which we are assuming in Case (3) of the proposition, and 
$$
q_1(Z\setminus Z_0) = 1 = q_1(Y\setminus Y_0).
$$

Finally, if $Z$ has at least $3$ fixed points then obviously it cannot embed into $Y$, which has only $2$ fixed points.
\end{proof}

	\section{Properties of blowups} \label{section:properties_of_blowups}

	The following is a characterization of subshifts that admit a blowup of a given type.  It will be useful later for proving finite decidability of the conditions in Theorem \ref{embedding_into_near_markov} in the case that $Z$ is irreducible sofic.

\begin{proposition} \label{characterization-blowup}  Let $Z$ be a subshift, and $P$ and $Q$ be finite subshifts with $Q\subseteq Z$.   Let $f: P\to Q$ be a factor code.
	For each $\bar{q}\in Q$, 
    let 
	\begin{align*}
U_m(\bar{q})&:=     \{z\in Z: z_{-1} \ne {\bar q}_{-1},  z_{[0, m-1]} = \bar{q}_{[0, m-1]} \}  \\
V_m(\bar{q})&:= \{z\in Z: z_{1} \ne {\bar q}_{1}, z_{[-m+1,0]} =  \bar{q}_{[-m+1,0]}  \}.
\end{align*}
	Then, there is an $f$-blowup $\pi: \hat{Z} \to Z$ of $Z$  iff 
    for each $\bar{q}\in Q$, there exist $N\in \mathbb{N}^+$ and continuous functions $c_{\bar{q},N}^+: U_{N}(\bar{q}) \to f^{-1}(\bar{q})$ and $c_{\bar{q},N}^-:V_N(\bar{q}) \to  f^{-1}(\bar{q})$ s.t. whenever  $u_{(-\infty,-1]}\bar{q}_{[0,n]}v_{[1,\infty)}\in Z$ for some $n\geq {N-1}$, some $u\in U_N(\bar{q})$ and $v\in V_N(\sigma^{n}(\bar{q}))$, 
    then  
	\begin{align} \label{congruence_condition}
    \sigma^n(c^+_{\bar{q},N}(u)) =  c^-_{\sigma^{n}(\bar{q}),N}(v).
    \end{align}

\noindent {Moreover, if $\hat{Z}$ is irreducible, then  for any 
$\bar{p},\bar{p}'\in P$,
there exists  
$u\in U_N(f(\bar{p}))$ and  $v\in V_N(f(\bar{p}'))$ such that $c^+_{f(\bar{p}),N}(u)=\bar{p}$ and $c^-_{f(\bar{p}'), N}(v)=\bar{p}'$.}

\end{proposition}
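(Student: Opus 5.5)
The plan is to prove the two directions separately: in the forward direction the functions $c^\pm$ are read off from the unique preimages of points entering or leaving a long run of $\bar q$; in the converse, $\hat Z$ is built by labelling each long run with the value of $c^\pm$. Throughout, I use that $P$ and $Q$ are finite, so there is an $L$ such that distinct points of $P$ (resp. $Q$) already differ on every window of length $L$.

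\textbf{Necessity.} Let $\pi:\hat Z\to Z$ be an $f$-blowup, recoded so that it is a $1$-block code; this is harmless, since recoding conjugates $\hat Z$. Take $N\ge L$. Then any $u\in U_N(\bar q)$ is not in $Q$: otherwise $u$ and $\bar q$ would agree on $[0,N-1]$, hence be equal, contradicting $u_{-1}\neq\bar q_{-1}$. So $u$ has a unique preimage $\hat u$. The set $U_N(\bar q)$ is closed and disjoint from $Q$, so $\pi$ restricts to a continuous bijection from the compact set $\pi^{-1}(U_N(\bar q))$ onto $U_N(\bar q)$. Hence $u\mapsto\hat u$ is continuous.

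The key compactness fact is the following. For every $\varepsilon>0$ there is $M$ such that if $\pi(x)_{[-M,M]}=q'_{[-M,M]}$ for some $q'\in Q$, then $x$ is $\varepsilon$-close to $\pi^{-1}(q')=f^{-1}(q')$. I would combine this with expansivity of the finite set $P$. A point that stays $\varepsilon$-close to the orbit of $P$ along a long interval must shadow a \emph{single} $\bar p\in P$ along that whole interval, because switching between distinct orbit points would violate the $L$-separation.

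So for $N\gg M$, I define $c^+_{\bar q,N}(u)$ to be the unique $\bar p\in f^{-1}(\bar q)$ that $\hat u$ shadows on $[M,N-1-M]$. This function is locally constant, hence continuous, and $c^-$ is defined symmetrically. For the congruence (\ref{congruence_condition}), take $z=u_{(-\infty,-1]}\bar q_{[0,n]}v_{[1,\infty)}$ with $n\ge N-1$. Then $z\notin Q$, so $z$ has a unique preimage $\hat z$, and the two restrictions of $\hat z$ agree with $\hat u$ and with the shift of $\hat v$ on the relevant windows. Since $\hat z$ shadows one point of $P$ throughout $[M,n-M]$, the two labels must agree after shifting by $n$. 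This requires $N>2M+L$, so that the shadowing windows overlap.

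\textbf{Sufficiency.} Given $N$ and the functions $c^\pm$, I would first let $\hat Z_1$ be the set of pairs $(z,\ell)$ defined as follows. For $z\notin Q$, each maximal interval $[a,b]$ with $b-a\ge N-1$ on which $z$ agrees with a shift of some $\bar q\in Q$ receives, at coordinates in $[a,b]$, the labels of the point $\sigma^{a}$-shifted $c^+(\sigma^a z)$. If the interval is infinite to the left, the label is instead taken from $c^-$ at its right end. All other coordinates receive a blank symbol $*$. By the congruence condition the labels are well defined when both ends are finite. Continuity of $c^\pm$ makes the set of such labelled points behave well under limits, and $\hat Z_1$ is shift-invariant by construction. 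I then set
\[
\hat Z:=\overline{\hat Z_1}\cup\{(f(\bar p),\bar p):\bar p\in P\},
\]
with $\pi$ the first coordinate.

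The main obstacle is to check that taking the closure creates no second preimage of any $z\notin Q$. A limit of labelled points either has its runs converging to runs of $z$, and continuity of $c^\pm$ then forces the correct label; or it has runs escaping to infinity, which produces a half-infinite run labelled through $c^-$ or $c^+$. I expect this case analysis to be the delicate part; if needed, I would enlarge $N$ so that runs of length at least $N$ are recognisable by a finite window. Over $\bar q\in Q$, every limit label lies in $f^{-1}(\bar q)$, so $\pi$ is an $f$-blowup.

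\textbf{Moreover part.} I would use necessity with a point of $\hat Z$ whose backward and forward orbits are dense. Assume $\hat Z$ is infinite, since otherwise the claim is trivial. Given $\bar p$, such a point comes arbitrarily close to $\bar p$ while lying outside $P$. Shifting it to the entry time of its run near $\bar p$ gives a point $\hat u$ with $\pi(\hat u)=u\in U_N(f(\bar p))$ that shadows $\bar p$, so $c^+(u)=\bar p$. The symmetric argument handles $\bar p'$ and $c^-$.
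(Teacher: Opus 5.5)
The genuine gap is in your moreover part. You shift $\hat w$, which is close to $\bar p$ near time $0$, to the entry time $a$ of its run. The shifted point $\sigma^{a}\hat w$ then lies over $U_N(f(\sigma^{a}\bar p))$ and shadows $\sigma^{a}\bar p$, not $\bar p$, and nothing controls $a$ modulo $\per(\bar p)$. So your argument only shows that every orbit of $P$ meets the range of $c^+$ (and of $c^-$).

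This cannot be repaired, because the claim for every $\bar p$ is false. Take the even shift $Z$ with the blowup from the proof of Proposition~\ref{embedding_even}. That is, $\hat Z$ is the golden mean shift, $\pi(x)_i=\Pi(x_ix_{i+1})$ with $\Pi(10)=\Pi(01)=0$ and $\Pi(00)=1$, $Q=\{0^\infty\}$, and $P$ is the orbit of $(01)^\infty$. Every $u\in U_N(0^\infty)$ has $u_{-1}=1$, which forces $\pi^{-1}(u)_{[-1,N]}=00101\cdots$. Hence $c^+$ is constantly the point of $P$ with $0$ at coordinate $0$. In fact every admissible $c^+$ is constant here: any $u\in U_N(0^\infty)$ and $v\in V_N(0^\infty)$ can be joined by $0^{n+1}$ for each odd $n\ge N-1$, and then (\ref{congruence_condition}) gives $c^+(u)=\sigma^{-n}c^-(v)$. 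So the other point of $P$ is never a value of $c^+$, even though $\hat Z$ is mixing.

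The true statement is the orbitwise one recorded in Corollary~\ref{simpler-chara-blowup}, and that is all Proposition~\ref{relabel} uses. The paper's own proof of the moreover part makes the same leap as yours. Also, your finite case is not trivial: if $\hat Z=P\neq\emptyset$, then $Z=Q$ and $U_N(\bar q)=\emptyset$ once $N\ge L$. The conclusion then fails, so this case must be excluded.

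The two directions of the equivalence follow the paper's route, but with one unproved step and two smaller issues.

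\textbf{Necessity.} You assert that $\hat z$ agrees with $\hat u$ and with $\sigma^{-n}\hat v$ on the relevant windows. This is not automatic, since $u$ and $z$ agree only on $(-\infty,n]$ and $\pi^{-1}$ is not a sliding block code. It is true, using that blowups are right- and left-closing:
\begin{enumerate}
\item $u$ and $z$ both lie in the clopen set $W=\{x\in Z: x_{[-1,L-1]}\neq\bar q'_{[-1,L-1]}\text{ for all }\bar q'\in Q\}$, which misses $Q$.
\item So $\pi^{-1}$ is uniformly continuous on $W$, and $\hat u,\hat z$ agree near $0$ once $N$ is large.
\item A right-closing delay $D$ then carries this agreement up to coordinate $n-D$.
\item Left-closing handles $\hat z$ versus $\sigma^{-n}\hat v$ in the same way.
\end{enumerate}
So you also need $M\ge D$. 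The paper is equally silent on this point.

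\textbf{Sufficiency.} Taking the closure amounts to the paper's check that the labelling map is continuous on $Z\setminus Q$. That is where (\ref{congruence_condition}) is actually used: when left ends of long runs escape to $-\infty$, the $c^+$-labels of the approximants must match the $c^-$-label of the limiting left-infinite run. It is not needed for well-definedness, since on finite runs you only use $c^+$. You should also recode first so that $x\mapsto x_0$ is injective on $Q$. Otherwise long maximal runs along different points of one orbit, e.g.\ $(aab)^\infty$ and its shift, can share a coordinate and receive conflicting labels.
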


%

	\begin{proof}
\noindent{\bf (Necessity:) } 
Suppose $\pi: \hat{Z}\to Z$ is an $f$-blowup of $Z$ at $Q$. By passing to a higher block version of $Z$, we may assume that  
the symbols of any two distinct orbits in  $Q$ are disjoint. We may also assume that 
the symbols of any two distinct orbits in  $P$ are disjoint.


First for each $\bar{q}\in Q$ and each $M\in \mathbb{N}^+$, by our assumption on symbols in  elements in $Q$, $U_M(\bar{q})$ does not contain any $\bar{q}\in Q$. Moreover, both $U_M(\bar{q})$ and $\pi^{-1}(U_M(\bar{q}))$ are compact. Thus, $\pi\vert_{\pi^{-1}(U_M(\bar{q}))}$ is a continuous bijection between compact sets and it is therefore a homeomorphism.

{Now since $\pi$ is continuous, for any $\bar{q}\in Q$ and any point in $Z\setminus Q$ that is right-asymptotic  to $\bar{q}$, it has a unique preimage that is right-asymptotic to an element in $f^{-1}(\bar{q})$. It follows that if a point in $Z\setminus Q$ agrees with $\bar{q}$ on a long block starting at the $0$-th coordinate, then its unique preimage will agree with an element in $f^{-1}(\bar{q})$ on a (possibly slightly smaller) long block starting at some positive coordinate.
In particular,} there is an $N_1(\bar{q})\in \mathbb{N}^+$ such that for all $m\geq 2N_1(\bar{q})$ and all $u\in U_m(\bar{q})$, $\pi^{-1}(u)_{[N_1(\bar{q}), m-N_1(\bar{q})]}$ is a subblock of some element in $\pi^{-1}(\bar{q})$ (which is a subset of $P$).
Similarly, for each $\bar{q}\in Q$, there is an $N_2(\bar{q})\in \mathbb{N}^+$ such that for all $m\geq 2N_2(\bar{q})$ and all $v\in V_m(\bar{q})$, $\pi^{-1}(v)_{[-m+N_2(\bar{q}), -N_2(\bar{q})]}$ is a subblock of some element in $\pi^{-1}(\bar{q})$.

Let $N_0:= \max_{\bar{q}\in Q}\max\{N_1(\bar{q}), N_2(\bar{q})\}$ and define 
$$
N:= 2N_0+2\max\{\per({p}): p\in P\}.
$$
Then for all $\bar{q}\in Q$ all $m\geq N$ and all $u\in U_m(\bar{q})$ and $v\in V_m(\bar{q})$, $\pi^{-1}(u)_{[N_0, m-N_0]}$ and $\pi^{-1}(v)_{[-m+N_0, -N_0]}$ are both subblocks of elements in $\pi^{-1}(\bar{q})$; moreover, since the lengths of $\pi^{-1}(u)_{[N_0, m-N_0]}$ and $\pi^{-1}(v)_{[-m+N_0, -N_0]}$ are greater than or equal to twice the least period of any $\bar{p}\in P$, each of $\pi^{-1}(u)_{[N_0, m-N_0]}$ and $\pi^{-1}(v)_{[-m+N_0, -N_0]}$ determine a unique periodic point in $P$ in the sense that  there exists a unique $\bar{p}\in \pi^{-1}(\bar{q})$ such that $\pi^{-1}(u)_{[N_0, m-N_0]}= \bar{p}_{[N_0, m-N_0]}$, and there exists a unique $\bar{p}'\in \pi^{-1}(\bar{q})$ such that  $\pi^{-1}(v)_{[-m+N_0, -N_0]}= \bar{p}'_{[-m+N_0, -N_0]}$.

Now define
\begin{align} \label{d+-d-_defnintino}
\begin{dcases}
c_{\bar{q},N}^+(u):=\bar{p}  \qquad \mbox{for all } u\in U_N(\bar{q}) \\
c_{\bar{q},N}^-(v):=\bar{p}'  \qquad \mbox{for all } v\in V_N(\bar{q}).
\end{dcases}
\end{align}
Then, $c^+_{\bar{q},N}$ and $c^-_{\bar{q},N}$ are continuous because $\pi^{-1}$ is continuous on each $U_N(\bar{q})$ and $V_N(\bar{q})$.

Now suppose $u_{(-\infty,-1]}\bar{q}_{[0,n]}v_{[1,\infty)}\in Z$ for some $n\geq N-1$, some $u\in U_N(\bar{q})$ and $v\in V_N(\sigma^n(\bar{q}))$.
Let $x:=\pi^{-1}(u_{(-\infty,-1]}\bar{q}_{[0,n]}v_{[1,\infty)})$. Since $u\in U_N(\bar{q})$, we infer from the definition of $c^+_{\bar{q},N}$ that $c^+_{\bar{q},N}(u)$ is the unique $\bar{p}\in \pi^{-1}(\bar{q})$ such that 
\begin{equation} \label{bar-p-phase}
\bar{p}_{[N_0, n-N_0]}=x_{[N_0, n-N_0]}.
\end{equation}
Similarly, since $v\in V_N(\sigma^n(\bar{q}))$, we infer from the definition of $c^-_{\sigma^n(\bar{q}),N}$ that $c^-_{\sigma^n(\bar{q}),N}(v)$ is the unique $\bar{p}'\in \pi^{-1}(\bar{q})$ such that 
\begin{equation} \label{bar-p'-phase}
\sigma^{-n}(\bar{p}')_{[N_0, n-N_0]}=x_{[N_0, n-N_0]}.
\end{equation}
Since we assume that the symbols in a least period of $\bar{p}$ and $\bar{p}'$ are all distinct, we conclude from (\ref{bar-p-phase}) and (\ref{bar-p'-phase}) that $\bar{p}=\sigma^{-n}(\bar{p}') $, which is equivalent to 
$$
c^+_{\bar{q},N}(u)= \sigma^{-n}(c^-_{\sigma^n(\bar{q}),N}(v)),
$$
as desired.

\medskip

\noindent {\bf (Sufficiency:)}
Let $\mathcal{A}_Z$ and $\mathcal{A}_P$ denote the alphabet of symbols that occur in points of $Z$ and $P$ respectively. We may assume that $\mathcal{A}_Z$ and $\mathcal{A}_P$ are disjoint, distinct orbits in $P$ have distinct symbols, and that $f: P\to Q$ is induced by a one-block map $F$ on $\mathcal{A}_P$.

Let $\mathcal{B} = \mathcal{A}_Z \cup \mathcal{A}_P$. We define an injective {shift-invariant} map $g:Z \setminus Q\to \mathcal{B}^{\mathbb Z}$ as follows: $g(z)_i = z_i$ except when either of the following holds:
\begin{enumerate}
\item[(A)] there exists $m<i$ and $\bar{q}\in Q$ such that $z_{m} \ne {\bar q}_{m}$ and $z_{[m+1,K_1]}={\bar q}_{[m+1,K_1]}$ for some $K_1\geq \max\{i, N+m+1\}$;
\item[(B)] there exists $M>i$ and $\bar{q}'\in Q$ such that  $z_{M} \ne {\bar q}'_{M}$ and  $z_{[K_2, M-1]}=\bar{q}'_{[K_2, M-1]}$ for some $K_2\leq \min\{i, M-N-1\}$.
\end{enumerate}

\noindent If (A) holds, 
then $\sigma^{m+1}(z) \in U_N(\sigma^{m+1}(\bar{q}))$ and we define 
\begin{equation} \label{25-02-02-a}
g(z)_i = \sigma^{-(m+1)}(c^+_{\sigma^{m+1}(\bar{q}),N}(\sigma^{m+1} z))_i ;
\end{equation}
if (B) holds,  then $\sigma^{M-1}(z) \in V_N(\sigma^{M-1}(\bar{q}'))$ and 
we define 
\begin{equation} \label{25-02-02-b}
	g(z)_i = \sigma^{-(M-1)}(c^-_{\sigma^{M-1}(\bar{q}'),N}(\sigma^{M-1}z))_i.
\end{equation}
We claim that if both (A) and (B) are satisfied, 
then Equation (\ref{25-02-02-a}) and (\ref{25-02-02-b}) coincide. 
This is because if both (A) and (B) hold, then, since symbols of any two orbits in $Q$ are distinct, we must have $\bar{q}=\bar{q}'$ and therefore $z_{[m+1, M-1]}=\bar{q}_{[m+1, M-1]}$. Moreover, $\sigma^{m+1}(z)$ is of the form 
\begin{align*}
&u_{(-\infty, -1]}\hat{q}_{[0, M-m-2]} v_{[1,+\infty)} \\
\mbox{where} \qquad &u=\sigma^{m+1}(z), \quad  v=\sigma^{M-1}(z) \quad \mbox{and} \quad \hat{q}=\sigma^{m+1}(\bar{q}).
\end{align*}
Thus, Equation (\ref{congruence_condition}) implies
$$
\sigma^{M-m-2}(c^+_{\hat{q}, N}(\sigma^{m+1}z))=c^-_{\sigma^{M-m-2}(\hat{q}),N}(\sigma^{M-1}z),
$$
which gives the equivalence between (\ref{25-02-02-a}) and (\ref{25-02-02-b}).


Note that $g$ is one-to-one because if $w =g(z) \in Image(g)$, then for all $i$ s.t. $w_i \in \mathcal{A}_Z$, $z_i = w_i$ and for all $i$ s.t. $w_i \in \mathcal{A}_P$, $z_i = F(w_i)$. 
				

Let $$\hat{Z} = g(Z \setminus Q) \cup P.$$ 
We claim that $\hat{Z}$ is a subshift. To see this, first observe that $\hat{Z}$ is shift-invariant by construction.  It remains to show that $\hat{Z}$ is closed in the full shift $\mathcal{B}^{\mathbb Z}$. 
				
Let	$w^{(n)}$ be a sequence in $\hat{Z}$ that converges to some $w' \in \mathcal{B}^{\mathbb Z}$. We show $w' \in \hat{Z}$.
				
If $w' \in P$, we are done. If not, 
we may assume that 
each $w^{(n)} \in Image(g)$. For each $n$, let $z^{(n)} = g^{-1}(w^{(n)}) \in Z$. By compactness, $z^{(n)}$ has a subsequence which converges to some $z' \in Z$.  By reindexing, we may assume that $z^{(n)}$ converges to $z'$ and $w^{(n)}$ still converges to $w'$.
				
We claim that $z' \notin Q$. To see this, suppose to the contrary that  $z' \in Q$. By the definition of $g$, for any positive integer $n_1$, there exists a positive integer $n_2$ such that for all $n$,  if $z^{(n)}_{[-n_2,n_2]} = \bar{q}_{[-n_2, n_2]}$ for some $\bar{q}\in Q$, then  $w^{(n)}_{[-n_1,n_1]}$ is a subblock of some point in $P$. It follows that $w' \in P$, a contradiction.

Now the reader can easily check that $g$ is continuous on its domain $Z \setminus Q$.  Thus, $w' = g(z') \in \hat{Z}$, as desired. 

Finally, let $\phi: \hat{Z} \rightarrow Z$ be defined by:
$$
\phi|_{ g(Z \setminus Q)   } = g^{-1} \quad \mbox{ and } \quad \phi(\bar{p})= f(\bar{p}) \quad \mbox{for all } \bar{p}\in P.
$$
Then $\phi = \Phi_\infty$ is the 1-block code defined on $\hat{Z}$ by 
\begin{align} \label{1-block-standard-form}
\Phi(b)= 
\begin{cases}
b  \qquad &\mbox{if}\quad  b \in \mathcal{A}_Z \\
F(b)  \qquad &\mbox{if}\quad  b\in \mathcal{A}_P.
\end{cases}
\end{align}
Thus $\phi$ is a (1-block) sliding block code from    $\hat{Z}$ onto  $Z$, each $z \in Z \setminus Q$ has a unique $\phi$-preimage and $\phi^{-1}(Q) = P$, thereby proving the sufficiency.

For the moreover part of the proposition, suppose $\hat{Z}$ is irreducible. Then for each $\bar{p}\in P$, there exists $w\in \hat{Z}\setminus P$ such that 
$$
w_{[N_0, m-N_0]}=\bar{p}_{[N_0, m-N_0]} \quad \mbox{and} \quad \pi(w)\in U_m(f(\bar{p}))
$$
for some $m\geq N$.
Thus, by the definition of $c^+_{f(\bar{p}), N}$ in Equation (\ref{d+-d-_defnintino}), we have $c^+_{f(\bar{p}), N}(u)=\bar{p}$. The proof for $c^-$ is similar.
    \end{proof}

Note that in Proposition \ref{characterization-blowup},  by recoding $\hat{Z}, Z, P, Q$ by passing to higher-block codes,  we can assume that \begin{enumerate}
    \item[(I)]\label{item:cond_I}  $f: P\to Q$ is a one-block code.
    \item[(II)]\label{item:cond_II}  the projections $z \mapsto z_0$ and $\hat z \to \hat z_0$ from $Q$ to $\mathcal{A}_Q$ and from $P$ to $\mathcal{A}_P$ are both injective.
    \item[(III)]\label{item:cond_III} $\euscript{B}(Q)=\euscript{B}(Z) \cap \cup_{n} (\mathcal{A}_Q)^n$.
\end{enumerate}
In this case, by using the uniform continuity of $c^+_{N,\bar{q}}$ and $c^-_{N,\bar{q}}$ and recoding by higher-blocks, we can assume that $N=1$ and $c^+_{N,\bar{q}}$ and $c^-_{N,\bar{q}}$ only depend on blocks of length $2$. Then, Proposition \ref{characterization-blowup} can be stated in the following simpler form.

\begin{corollary}\label{simpler-chara-blowup}
Let $Z$ be a subshift, $P,Q$ be finite subshifts with $Q\subseteq Z$, and $f:P \to Q$ be a factor code. Let 
\begin{align*}
    U^+&:=\{(a, q)\in(\mathcal{A}_{Z} \setminus \mathcal{A}_Q) \times (\mathcal{A}_{Q}): aq \mbox{ is allowed in $Z$} \} \\
    U^-&:=\{(q, a)\in (\mathcal{A}_{Q}) \times (\mathcal{A}_{Z} \setminus \mathcal{A}_Q): qa \mbox{ is allowed in $Z$} \}.
\end{align*}
After recoding $Z, P, Q$ such that  (I) (II) (III) above hold, then $Z$ admits an $f$-blowup if and only if  
there exist functions $c^+:U^+  \to P$ and $c^-:U^-  \to P$ such that 
\begin{enumerate}
    \item $c^+(a,q) \in f^{-1}(\bar{q})$ for every $(a,q) \in U^+$, where $\bar{q} \in Q$ is the unique element satisfying $\bar{q}_0=q$.
    \item $c^-(q,a) \in f^{-1}(\bar{q})$ for every $(q,a) \in U^-$.
    \item If  $w = (w_0,\ldots, w_{n+1}) \in (\mathcal{A}_{Z})^{n+2}$ is an admissible word for $Z$ such that $w_0,w_{n+1} \in  (\mathcal{A}_{Z} \setminus \mathcal{A}_Q)$ and $(w_1,\ldots,w_n) \in  \mathcal{A}_{Q}^n$ is an admissible word for $Q$, then 
    \begin{equation} \label{simpler-parity-condition}
        \sigma^n(c^+(w_0,w_1))=c^-(w_n,w_{n+1}).
    \end{equation}
\end{enumerate}    
\noindent Moreover, if $\hat{Z}$ is irreducible, then  for any orbit $\mathcal{O}$ in $P$, there exist $\bar{p},\bar{p}'\in \mathcal{O}$, $q, q'\in \mathcal{A}_Q$ and $a, a'\in (\mathcal{A}_Z\setminus \mathcal{A}_Q)$ such that $c^+(a,q)=\bar{p}$ and $c^-(q',a')=\bar{p}'$.
\end{corollary}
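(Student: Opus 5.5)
The plan is to derive the corollary directly from Proposition \ref{characterization-blowup}. Recoding reduces the window size $N$ to $1$ and makes the functions $c^\pm_{\bar q,N}$ depend only on $2$-blocks. The key translation is between points $u\in U_N(\bar q)$ and pairs $(a,q)=(u_{-1},u_0)\in U^+$, and similarly between points $v\in V_N(\bar q)$ and pairs $(q,a)=(v_0,v_1)\in U^-$. Condition (II) says that $q=\bar q_0$ determines $\bar q$. Condition (III) says that any $Z$-admissible word over $\mathcal{A}_Q$ is a $Q$-word.

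\textbf{Sufficiency.} Given $c^\pm$ satisfying (1)--(3), take $N=1$ and define
\[
c^+_{\bar q,1}(u):=c^+(u_{-1},u_0), \qquad c^-_{\bar q,1}(v):=c^-(v_0,v_1).
\]
These are locally constant, hence continuous. Items (1) and (2) place their values in $f^{-1}(\bar q)$.

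One point needs care here. By (II), membership $u\in U_1(\bar q)$ only forces $u_{-1}\ne\bar q_{-1}$, not $u_{-1}\notin\mathcal{A}_Q$. However, if $u_{-1}\in\mathcal{A}_Q$, then the $Z$-admissible word $u_{-1}u_0$ lies in $\euscript B(Q)$ by (III), so $u_{-1}$ determines a point of $Q$ whose symbol following $u_{-1}$ is $\bar q_0$. I expect to dispose of this using (II) applied to the $2$-block presentation, arguing that the recoding (III) forces $u_{-1}u_0$ to appear in $\bar q$ itself and hence $u_{-1}=\bar q_{-1}$. A recoding to distinguish orbits can be added if needed. I regard this bookkeeping of the recoding as the main, though minor, obstacle.

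With that settled, suppose a point $u_{(-\infty,-1]}\bar q_{[0,n]}v_{[1,\infty)}\in Z$ with $n\ge0$. Its word $w=u_{-1}\bar q_{[0,n]}v_1$ has the form required in (3), with inner length $n+1$. Condition (3) then gives exactly \eqref{congruence_condition}, after matching the shift index (I will check the off-by-one between $n$ and $n+1$ carefully). Proposition \ref{characterization-blowup} therefore yields an $f$-blowup.

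\textbf{Necessity.} Given an $f$-blowup, Proposition \ref{characterization-blowup} produces $N$ and continuous functions $c^\pm_{\bar q,N}$. By uniform continuity, these depend only on a bounded window, say $u_{[-L,L]}$. Passing to a higher block presentation of $Z$, $P$ and $Q$ with block length exceeding $N+2L$, and re-establishing (I)--(III), we can ensure that a $2$-block $(a,q)\in U^+$ determines all of the relevant data. That is, $q$ encodes $\bar q_{[0,N-1]}$ together with the window, and $a$ encodes the left part. Then $c^+(a,q):=c^+_{\bar q,N}(u)$ is well defined for any $u$ realizing the pair, and $c^-$ is defined symmetrically.

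Condition (3) follows from \eqref{congruence_condition}. An admissible word $w_0\cdots w_{n+1}$ as in (3) extends to a point of $Z$. In the recoded alphabet, a maximal run of $Q$-symbols of length $n$ corresponds to agreement with $\bar q$ over a block of length at least $N$ in the original coordinates, so the hypothesis $n\ge N-1$ of the proposition holds automatically.

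\textbf{Moreover part.} This transfers directly from the moreover part of Proposition \ref{characterization-blowup}: the $u$ and $v$ given there yield the pairs $(a,q)$ and $(q',a')$ with the required values.
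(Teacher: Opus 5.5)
Your route is the paper's: reduce to Proposition \ref{characterization-blowup} by recoding down to $N=1$. Your sufficiency direction is correct, and the point you flagged resolves as you expected. If $u\in U_1(\bar q)$ had $u_{-1}\in\mathcal A_Q$, then $u_{-1}u_0\in\euscript{B}(Q)$ by (III). By (II), the point of $Q$ carrying $u_0$ at coordinate $0$ is $\bar q$, which forces $u_{-1}=\bar q_{-1}$, a contradiction. The same argument gives $v_1\notin\mathcal A_Q$ for $v\in V_1(\bar q)$.

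The off-by-one you postponed is not cosmetic. With $\bar q_0=w_1$, items (1) and (2) give $\sigma^n(c^+(w_0,w_1))\in f^{-1}(\sigma^n\bar q)$ but $c^-(w_n,w_{n+1})\in f^{-1}(\sigma^{n-1}\bar q)$. So (3) as printed is contradictory whenever $\bar q$ is not a fixed point. It must be read as $\sigma^{n-1}(c^+(w_0,w_1))=c^-(w_n,w_{n+1})$, since a run of length $n$ shifts the phase by $n-1$. With that reading, your word $u_{-1}\bar q_{[0,n]}v_1$, whose run has length $n+1$, yields exactly \eqref{congruence_condition}.

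The genuine gap is in the necessity direction, at the claim that $q$ encodes the window and $a$ encodes the left part. In a higher block presentation $Z^{[M]}$, a pair $(a,q)\in U^+$ consists of two consecutive $M$-blocks with $q\in\euscript{B}_M(Q)$ and $a\notin\euscript{B}_M(Q)$. They overlap in $M-1$ symbols lying inside the $Q$-run. So $(a,q)$ records exactly one symbol to the left of the run, however large $M$ is, and $q$, being a $Q$-symbol, records nothing outside the run. The same holds dually for $U^-$. The dependence of $c^+_{\bar q,N}(u)$ on $u_{[-L,-2]}$ is therefore lost, and it can be essential.

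Here is an example. Let $Z$ be the closure of the bi-infinite concatenations of $110^{2k}$ and $1210^{2k+1}$ ($k\ge 1$), with $Q=\{0^\infty\}$ and $P=\{(\alpha\beta)^\infty,(\beta\alpha)^\infty\}$. Lift each run $0^{2k}$ (preceded by $11$) to $(\beta\alpha)^k$, and each run $0^{2k+1}$ (preceded by $21$) to $\alpha(\beta\alpha)^k$. This gives an $f$-blowup. Yet in every $Z^{[M]}$ both $U^+$ and $U^-$ are singletons, and (3) applied to runs of consecutive lengths forces $\sigma(c^+)=c^+$, which is impossible.

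The paper justifies this step only by the phrase ``recoding by higher blocks'', and a plain higher-block code does not suffice. What is needed is a recoding that enlarges only the symbols outside $\mathcal A_Q$:
\begin{enumerate}
\item Pass to $Z^{[M]}$ with $M\ge N$. As you intended, every run then comes from an original run of length $\ge N$.
\item Replace each symbol $y_i\notin\mathcal A_Q$ by a tagged copy of $y_{[i-L,i+L]}$, leaving the $Q$-symbols unchanged.
\end{enumerate}
This is a conjugacy, since the central coordinate recovers $y_i$. It changes neither $Q$, $P$, $\mathcal A_Q$ nor the runs, so (I)--(III) persist. Now the entry symbol carries the left window and the exit symbol carries the right one. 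For $L$ large compared with $M$ and the continuity window of $c^\pm_{\bar q,N}$, your definition of $c^\pm$ is well posed and (3) follows from \eqref{congruence_condition}.

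Because this recoding depends on the blowup, the ``only if'' holds for a suitable recoding, not for every recoding satisfying (I)--(III). The example, which already satisfies (I)--(III) with $M=1$, shows this.
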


Note that the definitions of a particular choice of $c^+$ and $c^-$ are buried in the necessity proof (Equation (\ref{d+-d-_defnintino})) of Proposition \ref{characterization-blowup}. Call this particular choice $d^+$ and $d^-$, respectively. In the language of Corollary \ref{simpler-chara-blowup}, $d^+(a,q)$ is defined to be the unique periodic point $\bar{p}\in P$ such that $\pi^{-1}(z)_{0}=\bar{p}$ for any $z\in Z$ with $z_{[-1,0]}=aq$. The function $d^-$ is defined similarly.

Let $\pi: \hat{Z}\to Z$ be an $f$-blowup, where $Z$ is a subshift and $f: P\to Q$ is a factor code from a finite subshift $P$ to another finite subshift $Q$. We say that $\pi: \hat{Z}\to Z$ is an $f$-blowup in {\em standard form} if the following holds:
\begin{enumerate}
     \item[(I)] $f:P \to Q$ is a one block code, that is, there exists $F:\mathcal{A}_{P} \to \mathcal{A}_Q$ such that $f(p)_0=F(p_0)$ for every $p \in P$.
     \item[(II)] The projections $z \mapsto z_0$ and $\hat z \to \hat z_0$ from $Q$ to $\mathcal{A}_Q$ and from $P$ to $\mathcal{A}_P$ are bijective.
     \item[(III)] $\euscript{B}(Q)=\euscript{B}(Z) \cap \cup_{n} (\mathcal{A}_Q)^n$.
    \item[(IV)] $\pi$ is a one-block code
    \item[(V)] $\pi(\hat z)_0 = F(\hat z_0)$ whenever $\hat z_0 \in \mathcal{A}_P$.
    \item[(VI)] $\pi(\hat z)_0 = \hat z_0$ whenever $\hat z_0 \not \in \mathcal{A}_P$.
\end{enumerate}


Note from Equation (\ref{1-block-standard-form}) the blowup $\phi:\hat{Z}\to Z$ constructed in the proof of sufficiency part of Proposition \ref{characterization-blowup} is of standard form. Thus, we have the following proposition.


\begin{proposition} \label{blowup-and-standard-form}
  Let $Z$ be a subshift. Then 
  for any $f$-blowup $\phi: \hat{Z}\to Z$, there is an $f$-blowup in standard form with the same functions $d^+, d^-$ as $\phi$.
\end{proposition}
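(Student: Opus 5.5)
Given an $f$-blowup $\phi:\hat Z\to Z$, I will run the necessity half of Proposition \ref{characterization-blowup} on $\phi$ and feed its output into the sufficiency half. First I recode $Z,P,Q$ by a higher block presentation so that conditions (I)--(III) hold. By the remark preceding Corollary \ref{simpler-chara-blowup}, this lets us take $N=1$ with functions depending on $2$-blocks. Conjugating $\hat Z$ along the way keeps $\phi$ an $f$-blowup, and conjugating $Z$ compatibly does the same. The necessity proof applied to $\phi$ then produces the specific functions $d^+=c^+_{\cdot,N}$ and $d^-=c^-_{\cdot,N}$ of Equation (\ref{d+-d-_defnintino}), defined through $\phi^{-1}$, and they satisfy the congruence condition (\ref{congruence_condition}).

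Next I run the sufficiency construction of Proposition \ref{characterization-blowup} with $c^\pm:=d^\pm$. It defines $g:Z\setminus Q\to\mathcal{B}^{\mathbb Z}$, the subshift $\hat Z':=g(Z\setminus Q)\cup P$, and the $1$-block map $\phi'=\Phi_\infty$ of Equation (\ref{1-block-standard-form}). The sufficiency proof already shows that $\phi'$ is an $f$-blowup. Conditions (IV)--(VI) follow directly from (\ref{1-block-standard-form}), since $\mathcal{A}_Z$ and $\mathcal{A}_P$ are disjoint and $F$ is a one-block map. Conditions (I)--(III) hold because of the initial recoding. So $\phi'$ is in standard form.

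The remaining and main step is to check that the functions $d'^\pm$ extracted from $\phi'$ by the recipe after Corollary \ref{simpler-chara-blowup} equal $d^\pm$. For $(a,q)\in U^+$, take $z\in Z$ with $z_{[-1,0]}=aq$, long enough agreement with $\bar q$ to the right, and $z\notin Q$. Clause (A) of the definition of $g$ (with $m=-1$) then gives $g(z)_0=d^+(\sigma^0 z)_0$, which is the $0$-th symbol of $d^+(a,q)$. Since distinct points of $P$ have distinct $0$-th symbols by (II), this reads off $d'^+(a,q)=d^+(a,q)$; the argument for $d^-$ is symmetric via clause (B).

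There is one technical point I expect to need care. The definition of $d'^\pm$ has to be well defined for all such $z$, including when the $Q$-run is short or the point is left-asymptotic to $Q$. I would handle this by citing the well-definedness already established in the necessity proof: on $\phi'^{-1}(U_N(\bar q))$ the map is a homeomorphism, so the value depends only on the $2$-block $aq$ after recoding. Finally I note that all recodings are conjugacies commuting with the blowup maps, so "same $d^\pm$" is meant up to this common recoding.
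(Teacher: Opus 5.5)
Your proposal is correct and follows the paper's route. The paper's only justification is the remark that the sufficiency construction of Proposition~\ref{characterization-blowup}, run with the functions $d^\pm$ produced by its necessity half, gives a blowup in standard form by (\ref{1-block-standard-form}); your check via clause (A)/(B) and condition (II) that the new blowup's functions coincide with $d^\pm$ makes explicit what the paper leaves implicit. One simplification: with $N=1$, clause (A) applies to every $z$ with $z_{[-1,0]}=aq$ once its run-length threshold is read as matching the domain $U_N(\bar q)$ (as written it is off by one), so you need not pick a $z$ with a long $Q$-run after $aq$, which may not exist for a given pair $(a,q)$.
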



The following example shows that a sofic shift $Z$ can admit non-conjugate $f$-blowups for a fixed factor map $f: P\to Q$.
    
	\begin{example} \label{blowups-may-not-conjugate}
		Let $Z, W_1, W_2$ be sofic shifts described by the the three labeled graphs in Figure~\ref{2blowup-notunique}, respectively. Let $\pi_1: W_1\to Z$ and $\pi_2: W_2\to Z$ both be factor maps induced by the $1$-block map that maps $0, 1$ to $a$ (and acts as an identity map on $b$ and $c$). Also let  $f:P\to Q$ where $P:=\{(01)^\infty, (10)^\infty\}$ and $Q:= \{a^\infty\}$. Then, $\pi_1: W_1\to Z$ and $\pi_2: W_2\to Z$ are both $f$-blowups of $Z$. We claim that there is no conjugacy $\tau: W_1\to W_2$ such that $\pi_2\circ \tau=\pi_1$. 
		
		To see this, assume to the contrary that such a $\tau$ exists. We also assume without loss of generality that $\tau((01)^\infty.(01)^\infty)=(0 1)^\infty.(01)^\infty.$ 
		Define $\psi_1: Z\setminus\{a^\infty\}\to W_1\setminus\{(01)^\infty, (10)^\infty\}$ by $\psi_1(u)=\pi_1^{-1}(u)$ and define $\psi_2: Z\setminus \{a^\infty\}\to W_2\setminus \{(0 1)^\infty, (1 0)^\infty\}$ by $\psi_2(u)=\pi_2^{-1}(u)$. Then restricted to $Z\setminus \{a^\infty\}$, $\tau\circ \psi_1 =\psi_2$. 
		For each $n\geq 1$, consider
		$$
		y^{(n)}:= \cdots c a^{2n}.a^{2n}b\cdots
		$$
        where the dots are allowed blocks in $Z$ such that $y^{(n)}\in Z$.
		Then, 
		\begin{align*}
			\psi_1(y^{(n)})=\cdots c(01)^n.(01)^nb\cdots \quad \mbox{and} \quad
			\psi_2(y^{(n)})=\cdots c(10)^n.(1 0)^nb\cdots.
		\end{align*}
		Note that $\psi_2(y^{(n)})$ converges to $(10)^\infty. (10)^\infty$ by the last equality above; on the other hand, since $\tau\circ \psi_1=\psi_2$, we have $\psi_2(y^{(n)})=\tau\circ \psi_1(y^{(n)})=\tau(\cdots c(01)^n.(01)^nb\cdots)$, which converges to $(01)^\infty.(0 1)^\infty$, a contradiction. 

        
\begin{figure}[H]
	\begin{center}
	\begin{tikzpicture}[scale=0.35]  
	\draw [opacity=0] (0,0) grid (40,15);
	\node [circle, draw, thick] (a1) at (6,14) {$b$};
	\node [circle, draw, thick] (a2) at (9,10.5) {$a$};
	\node [circle, draw, thick] (a3) at (9,5.5) {$a$};
	\node [circle, draw, thick] (a4) at (6,2) {$c$};
	\node [circle, draw, thick] (a5) at (3,5.5) {$a$};
	\node [circle, draw, thick] (a6) at (3,10.5) {$a$};
	\draw [-stealth, black, thick] (a1) to [out=320, in=120] (a2);
	\draw [-stealth, black, thick] (a2) to [out=300, in=60] (a3);
	\draw [-stealth, black, thick] (a3) to [out=120, in=240] (a2);
	\draw [-stealth, black, thick] (a3) to [out=240, in=40] (a4);
	\draw [-stealth, black, thick] (a4) to [out=140, in=300] (a5);
	\draw [-stealth, black, thick] (a5) to [out=120, in=240] (a6);
	\draw [-stealth, black, thick] (a6) to [out=300, in=60] (a5);
	\draw [-stealth, black, thick] (a6) to [out=60, in=220] (a1);
	\node at (6, -1) [coordinate, draw, fill=black, label=below: Presentation of $Z$] {};
	\node [circle, draw, thick] (a1) at (21,14) {$b$};
	\node [circle, draw, thick] (a2) at (24,10.5) {$0$};
	\node [circle, draw, thick] (a3) at (24,5.5) {$1$};
	\node [circle, draw, thick] (a4) at (21,2) {$c$};
	\node [circle, draw, thick] (a5) at (18,5.5) {$0$};
	\node [circle, draw, thick] (a6) at (18,10.5) {$1$};
	\draw [-stealth, black, thick] (a1) to [out=320, in=120] (a2);
	\draw [-stealth, black, thick] (a2) to [out=300, in=60] (a3);
	\draw [-stealth, black, thick] (a3) to [out=120, in=240] (a2);
	\draw [-stealth, black, thick] (a3) to [out=240, in=40] (a4);
	\draw [-stealth, black, thick] (a4) to [out=140, in=300] (a5);
	\draw [-stealth, black, thick] (a5) to [out=120, in=240] (a6);
	\draw [-stealth, black, thick] (a6) to [out=300, in=60] (a5);
	\draw [-stealth, black, thick] (a6) to [out=60, in=220] (a1);
	\node at (21, -1) [coordinate, draw, fill=black, label=below: Presentation of $W_1$] {};
	\node [circle, draw, thick] (a1) at (33,14) {$b$};
	\node [circle, draw, thick] (a2) at (36,10.5) {$0$};
	\node [circle, draw, thick] (a3) at (36,5.5) {$1$};
	\node [circle, draw, thick] (a4) at (33,2) {$c$};
					\node [circle, draw, thick] (a5) at (30,5.5) {$1$};
					\node [circle, draw, thick] (a6) at (30,10.5) {$0$};
					\draw [-stealth, black, thick] (a1) to [out=320, in=120] (a2);
					\draw [-stealth, black, thick] (a2) to [out=300, in=60] (a3);
					\draw [-stealth, black, thick] (a3) to [out=120, in=240] (a2);
					\draw [-stealth, black, thick] (a3) to [out=240, in=40] (a4);
					\draw [-stealth, black, thick] (a4) to [out=140, in=300] (a5);
					\draw [-stealth, black, thick] (a5) to [out=120, in=240] (a6);
					\draw [-stealth, black, thick] (a6) to [out=300, in=60] (a5);
					\draw [-stealth, black, thick] (a6) to [out=60, in=220] (a1);
					\node at (33, -1) [coordinate, draw, fill=black, label=below: Presentation of $W_2$] {};
				\end{tikzpicture}
			\end{center}
			\caption{An example where the $f$-blowup is not unique up tp conjugacy.}
			\label{2blowup-notunique}
		\end{figure}
	\end{example}

Though Example \ref{blowups-may-not-conjugate} suggests that a subshift may have many non-conjugate blowups, the following proposition shows that any $f$-blowup must be conjugate to an $f$-blowup in standard form.


\begin{proposition} \label{2blowup_conj}
		Let $\pi: X\to Z$ and $\phi:\hat{Z}\to Z$ both be $f$-blowups of $Z$. If $\pi$ and $\phi$ have the same functions $d^+$ and $d^-$, then there is a conjugacy $\tau: X\to \hat{Z}$ such that the following diagram commutes
    \begin{equation} 
			\xymatrix{
				X \ar[dr]_\pi \ar[rr]^\tau & & \hat{Z} \ar[dl]^\phi  \notag\\
				&	Z & }.
	\end{equation}
    In particular, any $f$-blowup is conjugate to an $f$-blowup in standard form in the sense of the diagram above. 
	\end{proposition}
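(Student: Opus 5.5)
Define $\tau$ fiberwise. For $x\in X\setminus P$, the point $\pi(x)$ lies in $Z\setminus Q$ and so has a unique $\phi$-preimage; set $\tau(x):=\phi^{-1}(\pi(x))$. Both blowups have the same $f: P\to Q$, with $P$ embedded in $X$ and in $\hat Z$ (after identifying $\pi^{-1}(Q)$ and $\phi^{-1}(Q)$ with $P$ via $f$). On $P$ set $\tau$ to be the identity, so that $\phi\circ\tau=f=\pi$ there. Then $\tau$ is a bijection: it is injective and surjective off $P$ because both maps are injective off $P$, onto $Z\setminus Q$, and it is the identity on $P$. It commutes with $\sigma$ and satisfies $\phi\circ\tau=\pi$ by construction.

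So the whole issue is continuity, and only at points of $P$. Off $P$ there is no difficulty: $\phi^{-1}$ is continuous on $Z\setminus Q$, as in the proof of Proposition~\ref{characterization-blowup}, since $\phi$ restricted to preimages of compact sets avoiding $Q$ is a homeomorphism. By compactness, continuity of $\tau$ implies continuity of $\tau^{-1}$, so it suffices to prove the following. If $x^{(n)}\in X\setminus P$ converges to $\bar p\in P$, then $\tau(x^{(n)})\to\bar p$.

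To prove this, I would reduce, via Proposition~\ref{blowup-and-standard-form} and the recodings (I)--(III), to the setting of Corollary~\ref{simpler-chara-blowup}. Write $z^{(n)}=\pi(x^{(n)})$. Then $z^{(n)}\to f(\bar p)$, so $z^{(n)}$ agrees with $\bar q=f(\bar p)$ on a window $[-k_n,l_n]$ with $k_n,l_n\to\infty$. Assume first that the window is maximal on both sides and bounded, i.e.\ $z^{(n)}$ exits the $Q$-alphabet at $-k_n-1$ and at $l_n+1$. By the argument in the necessity proof of Proposition~\ref{characterization-blowup}, the preimage $x^{(n)}$ on the central part of this window agrees with the point determined by $d^+$ at the left exit (shifted appropriately). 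This is also the point determined by $d^-$ at the right exit, by \eqref{congruence_condition}. Since $x^{(n)}\to\bar p$, that point must be $\bar p$, up to the shift fixed by the window position. The same description applies to $\tau(x^{(n)})=\phi^{-1}(z^{(n)})$ with $\phi$'s functions $d^\pm$. These coincide with $\pi$'s by hypothesis, so $\tau(x^{(n)})$ also agrees with $\bar p$ on $[-k_n+N_0,\,l_n-N_0]$, and hence converges to $\bar p$. In the one-sided cases, where $z^{(n)}$ is in the $Q$-alphabet for all coordinates $\le l_n$ or for all $\ge -k_n$, only one of $d^+,d^-$ is used and the argument is the same. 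Both-sided infinite is impossible, since $z^{(n)}\notin Q$ by (III).

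The main obstacle is making precise that the phase of the preimage on a long $Q$-window is governed by $d^\pm$ uniformly, i.e.\ with a single $N_0$ for all points. This is exactly what is extracted in the necessity part of Proposition~\ref{characterization-blowup}, and I would quote it there for both $\pi$ and $\phi$. The final claim then follows by taking $\phi$ to be the standard-form blowup with the same $d^\pm$ given by Proposition~\ref{blowup-and-standard-form}.
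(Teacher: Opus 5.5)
Your proposal is correct and follows essentially the same route as the paper. You set $\tau=\phi^{-1}\circ\pi$ off $P$, use the equality of $d^{\pm}$ at the exit of $\pi(x^{(n)})$ from the long $Q$-window to see that $x^{(n)}$ and $\tau(x^{(n)})$ follow the same point of $P$ around coordinate $0$, and conclude that $\tau$ is the identity on $P$ and a conjugacy by compactness.

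The difference is only organizational: you define $\tau$ as the identity on $P$ outright and prove continuity there via the uniform $N_0$ from the necessity part of Proposition~\ref{characterization-blowup}, whereas the paper defines $\tau$ on $P$ through subsequential limits (treating isolated points separately) and then checks well-definedness. Your arrangement is, if anything, slightly cleaner.
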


\begin{proof}

By higher-block recoding, we may assume that conditions 
(I) (II) and (III) 
are satisfied.

Let $f: P\to Q$ where $P$ and $Q$ are finite subshifts with $Q\subseteq Z$. We denote $P_1:= \pi^{-1}(Q)$ and $P_2:= \phi^{-1}(Q)$. Note that $P_1, P_2$ are both copies of $P$. 

Define $\tau: X\to \hat{Z}$ as follows: 
\begin{itemize}
\item[-] Case 1: If $x\in X\setminus P_1$, then let 
		\begin{equation} \label{2025-05-15-a}
			\tau(x):= \phi^{-1}\circ \pi(x)
		\end{equation}
        Since $\phi^{-1}$ is continuous on $Z\setminus Q$, $\tau$ is continuous on $X\setminus P_1$. 
\item[-] Case 2: Let $x\in P_1$ and denote $\bar{q}:= \pi(x)$. Note that $\bar{q}\in Q$.

If $x$ is an isolated point in $X$, then choose any {$\hat{z} \in \phi^{-1}(\{\bar{q}\})$} 
with $\per(x)=\per(\hat{z})$ and define 
\begin{equation*}
\tau(\sigma^j(x)):=\sigma^j(\hat{z}) \quad \mbox{for each $j\in\{0, 1, \cdots, \per(x)-1\}$};
\end{equation*}
If $x$ is not an isolated point, then choose an arbitrary sequence $\{x^{(n)}\}_{n=1}^\infty\subset X\setminus P_1$ such that $x^{(n)}$ converges to $x$. For each $n$, $\tau(x^{(n)})$ is defined by Equation (\ref{2025-05-15-a}) and is in $\hat{Z}$. By passing to a subsequence, we may assume that $\tau(x^{(n)})$ converges to a point $\hat{z}\in \hat{Z}$. By the continuity of $\phi$, $\phi(\tau(x^{(n)}))$ converges to $\phi(\hat{z})$. But note that $\phi(\tau(x^{(n)}))=\pi(x^{(n)})$ and it converges to $\bar{q}$ by the continuity of $\pi$. Thus, we have $\phi(\hat{z})=\bar{q}\in Q$ and therefore $\hat{z}\in P_2$.
In this case, we define 
\begin{equation} \label{2025-05-15-b}
		\tau(\sigma^j(x)):= \sigma^j (\hat{z}) \quad \mbox{for each $j\in \{0,1\cdots , \per(x)-1\}.$} \notag
		\end{equation}
\end{itemize}



   We first show that $\tau$ is well-defined by showing that for each $j$, the value of $\tau(\sigma^j(x))$ defined in Case 2 above only depends on $x$ (and is independent of the choice of $\{x^{(n)}\}_{n=0}^\infty$). It suffices to prove this for $j=0$, i.e., $\hat{z}$ only depends on $x$. Moreover, by Condition (II), it suffices to show $\hat{z}_0=x_0$ (here we are identifying $P_1$ and $P_2$ to avoid extra notations).  
   
To this end, 
denote $z^{(n)}:= \pi(x^{(n)})$.
Let $U^+, U^-$ be defined as in Corollary  \ref{simpler-chara-blowup}. Since  $z^{(n)}$ converges to $\bar{q}$, there exist $R\in \mathbb{N}^+$ such that $ z^{(n)}_0=\bar{q}_0$ for all $n\geq R$. Moreover, for each $n\geq R$, since $z^{(n)}\neq \bar{q}$, either \begin{enumerate}
        \item[(A)] there exist $m<0$ such that $z^{(n)}_{m}\neq \bar{q}_{m}$ and $z^{(n)}_{[m+1, 0]}=\bar{q}_{[m+1,0]}$; or
        \item[(B)] there exist $M>0$ such that $z^{(n)}_{M}\neq \bar{q}_{M}$ and $z^{(n)}_{[0, M-1]}=\bar{q}_{[0,M-1]}$;
    \end{enumerate}
 In case (A), by Condition (III), we have $(z^{(n)}_m, z^{(n)}_{m+1})\in U^+$. Since $d^+$ is associated with $\pi$, we infer from the definition of $d^+$ that $d^+(z^{(n)}_m, z^{(n)}_{m+1})$ is the unique point in $P_1$ with $(d^+(z^{(n)}_m, z^{(n)}_{m+1}))_0=(\pi^{-1}(z^{(n)}))_0$, i.e., 
  \begin{equation} \label{conjugacy-prop-x^n-0coor}
(d^+(z^{(n)}_m, z^{(n)}_{m+1}))_0=x^{(n)}_0;
 \end{equation}
 Similarly, since $\phi$ has the same $d^+$ and $\tau(x^{(n)})=\phi^{-1}(\pi(x^{(n)}))=\phi^{-1}(z^{(n)})$, we have 
\begin{equation} \label{conjugacy-prop-tau(x)^n-0coor}
(d^+(z^{(n)}_m, z^{(n)}_{m+1}))_0=(\tau(x^{(n)}))_0.
 \end{equation}
Now, from (\ref{conjugacy-prop-x^n-0coor}) and (\ref{conjugacy-prop-tau(x)^n-0coor}), we have $x^{(n)}_0=(\tau(x^{(n)}))_0$ (for all $n\geq R$). Thus,  $x_0=\hat{z}_0
$ since $x^{(n)}$ converges to $x$ and $\tau(x^{(n)})$ converges to $\hat{z}$.
 
 In case (B), a similar argument (but using $d^-$ rather than $d^+$) as in Case (A) again gives $x_0=\hat{z}_0$. 
Thus, $\hat{z}$ is independent of the choice of $x^{(n)}$ and therefore $\tau$ is well-defined.

It remains to show $\tau$ is a conjugacy from $X$ to $\hat{Z}$.

First note that $\tau$ is continuous on $X$ by construction. It can also be readily checked that $\tau$ is also shift-commuting. 

To see $\tau$ is bijective, first note that $\tau$ is bijective on $X\setminus P_1$ since $\pi$ is bijective there and $\phi^{-1}$ is bijective on $\pi(X\setminus P_1)$. Now on $P_1$, it has been proven that for any $x\in P_1$, $\tau$ maps $x$ to a point $\hat{z}$ in $\hat{Z}$ with $x_0=\hat{z}_0$. Since the projection from $P$ to $\mathcal{A}_P$ is bijective (by Condition (II)) and $P_1, P_2$ are copies of $P$, we know that $\tau$ is the identity map and is therefore bijective on $P_1$ (here again we are identifying $P_1$ and $P_2$). Hence, $\tau$ is bijective on the entire $X$.

Finally, as a continuous, bijective map between two compact sets, $\tau$ must be a homeomorphism, i.e., a conjugacy.

The ``in particular" part follows from Proposition \ref{blowup-and-standard-form}.
\end{proof}

{\begin{proposition} \label{individual-blowup}
	Let $Z$ be an irreducible sofic shift and $Q_1, Q_2$ be two {disjoint} finite subshifts in $Z$. Let $P_1, P_2$ be two finite subshifts and $f_i: P_i\to Q_i$ be a factor code for $i=1,2$. Denote $f: P_1\cup P_2 \to Q_1\cup Q_2$. Then, {up to conjugacy}, $Z$ has an $f$-blowup at $Q_1\cup Q_2$ if and only if $Z$ has an $f_1$-blowup at $Q_1$ and has an $f_2$-blowup at $Q_2$.
\end{proposition}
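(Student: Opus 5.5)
The natural route is the combinatorial characterization of blowups in Corollary~\ref{simpler-chara-blowup}. First recode by a higher block presentation of $Z$ so that conditions (I), (II), (III) hold simultaneously for $Q=Q_1\cup Q_2$, $P=P_1\cup P_2$ and $f$. Since $Q_1,Q_2$ are disjoint finite subshifts, after passing to a sufficiently high block presentation we may also assume $\mathcal{A}_{Q_1}\cap\mathcal{A}_{Q_2}=\emptyset$, and that no word of length $2$ in $\euscript{B}(Z)$ has one symbol in $\mathcal{A}_{Q_1}$ and the other in $\mathcal{A}_{Q_2}$. This uses (III): such a transition would eventually force a point of $Q$ mixing the two, which is impossible. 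Then (I)--(III) also hold for each pair $(Q_i,P_i,f_i)$ separately. Since conjugacy preserves the existence of blowups (compose with the conjugacy), this recoding loses nothing; this is where ``up to conjugacy'' enters.

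For necessity, suppose $\pi:\hat Z\to Z$ is an $f$-blowup at $Q_1\cup Q_2$. I would take $\hat Z_1:=\hat Z$ modulo collapsing the fibers over $Q_2$. The cleaner route, however, is through the corollary. Let $d^\pm$ be the functions attached to $\pi$, and write $U^\pm_i$ for the sets $U^\pm$ defined relative to $Q_i$. For $(a,q)\in U^+_1$ either $a\notin\mathcal{A}_{Q}$, in which case $(a,q)\in U^+$ and we set $c_1^+(a,q):=d^+(a,q)$, or $a\in\mathcal{A}_{Q_2}$, which our recoding excluded. So $c_1^\pm$ are restrictions of $d^\pm$. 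The congruence condition \eqref{simpler-parity-condition} for $Q_1$ involves words $w_0 w_1\cdots w_n w_{n+1}$ with $w_1\cdots w_n\in\euscript{B}(Q_1)$ and $w_0,w_{n+1}\notin\mathcal{A}_{Q_1}$. By the recoding, $w_0,w_{n+1}\notin\mathcal{A}_{Q_2}$ either, so the condition for $Q$ applies directly. The corollary then yields an $f_1$-blowup, and similarly an $f_2$-blowup.

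For sufficiency, take the functions $c_i^\pm$ for the $f_i$-blowups; by Proposition~\ref{blowup-and-standard-form} and Proposition~\ref{2blowup_conj} we may use the standard-form ones. Define $c^\pm$ on $U^\pm=U^\pm_1\sqcup U^\pm_2$ piecewise. The domains are disjoint because the $Q$-symbol determines $i$, and they exhaust $U^\pm$ because no transition occurs directly between $\mathcal{A}_{Q_1}$ and $\mathcal{A}_{Q_2}$. Conditions (1) and (2) are immediate. For condition (3), a word $w$ with middle in $\euscript{B}(Q)$ has its middle in $\euscript{B}(Q_1)$ or in $\euscript{B}(Q_2)$: it cannot switch, again by the no-transition property. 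Its endpoints lie outside $\mathcal{A}_Q$, hence outside $\mathcal{A}_{Q_i}$, so the required identity is exactly the one for $c_i^\pm$. The corollary then gives an $f$-blowup.

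The main obstacle is the recoding step. One must check that a single higher block presentation makes (I)--(III) hold for $Q$ and for each $Q_i$, and also rules out adjacent symbols from $Q_1$ and $Q_2$. The difficulty is that (III) as stated for the union is not automatic: words of $Z$ built from $\mathcal{A}_Q$ symbols might wander between $Q_1$ and $Q_2$. I would handle this with a compactness argument. Any such long word gives in the limit a point of $Z$ all of whose symbols lie in $\mathcal{A}_Q$, hence a point of $Q$ by (III) for large enough blocks. This forces the block length where the word wanders to be bounded, and one further higher block recoding eliminates it. Irreducibility and soficity of $Z$ do not appear to be essential in this plan beyond guaranteeing that (III) can be achieved by recoding.
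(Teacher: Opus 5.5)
\paragraph{The gap: existence of $2$-block functions.} The problem is the step that produces $2$-block functions. In the necessity direction you take ``the functions $d^\pm$ attached to $\pi$'' on $U^\pm$. In the sufficiency direction you take ``the functions $c_i^\pm$ for the $f_i$-blowups''. Both are taken in a higher block presentation fixed in advance.

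This is what Corollary~\ref{simpler-chara-blowup} (and the definition of $d^\pm$ after it) asserts. But the justification given there, uniform continuity plus higher block recoding, does not go through. The phase with which a blowup enters $P$ can depend on symbols \emph{before} the last non-$Q$ symbol, and under any higher block recoding the entering pair $(a,q)$ never sees past that symbol.

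\paragraph{Counterexample.} Let $Z$ be presented by the irreducible graph with edges $v_0\xrightarrow{c}v_c\xrightarrow{b}A$, $v_0\xrightarrow{d}v_d\xrightarrow{b}B$, $A\xrightarrow{a}B\xrightarrow{a}A$, $A\xrightarrow{e}v_0$. Take $Q=\{a^\infty\}$ and $P$ the orbit of $(01)^\infty$. Relabelling $A\to B$ by $0$ and $B\to A$ by $1$ gives an irreducible $f$-blowup: the graph is bi-resolving and every $b$ is preceded by $c$ or $d$, so each point off $Q$ has a unique presenting path.

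Yet (I)--(III) hold, with $U^+=\{(b,a)\}$ and $U^-=\{(a,e)\}$. Moreover $ba^ne\in\euscript{B}(Z)$ for every $n\ge 1$ (even runs after $cb$, odd runs after $db$). So condition (3) of Corollary~\ref{simpler-chara-blowup} would force $\sigma(c^+(b,a))=c^+(b,a)$, which is impossible for a point of least period $2$. Passing to higher blocks changes nothing.

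A recoding that works must depend on the blowup; here one would split $b$ according to its predecessor. Your argument would then need a single recoding that works simultaneously for $Q$, $Q_1$, $Q_2$ and for both given blowups, and you do not provide one.

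\paragraph{The repair.} Your underlying idea, that the combinatorial criterion decouples across orbits, is right. Use Proposition~\ref{characterization-blowup} itself rather than its $2$-block form. There, for each $\bar q\in Q$, the following involve only $\bar q$ and its shifts:
\begin{itemize}
\item the sets $U_N(\bar q)$ and $V_N(\bar q)$;
\item the target $f^{-1}(\bar q)$, which equals $f_i^{-1}(\bar q)$ when $\bar q\in Q_i$, since $f(P_j)\subseteq Q_j$ and $Q_1\cap Q_2=\emptyset$;
\item condition~\eqref{congruence_condition}.
\end{itemize}
Also, $N$ can always be enlarged by restricting $c^\pm$ to $U_{N'}(\bar q)\subseteq U_N(\bar q)$ and $V_{N'}(\bar q)\subseteq V_N(\bar q)$. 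Hence the criterion for $f$ at $Q_1\cup Q_2$ is literally the conjunction of the criteria for $f_1$ at $Q_1$ and for $f_2$ at $Q_2$. This gives both directions at once, with no recoding and no use of irreducibility or soficity.

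\paragraph{Comparison with the paper.} The paper argues structurally instead. For sufficiency it takes the fibre product of the two blowups and checks that $\phi_X\circ\psi_X$ has fibre $\{(x,\phi_W^{-1}(\bar q)): x\in\phi_X^{-1}(\bar q)\}$ over $\bar q\in Q_1$, and symmetrically over $Q_2$. For necessity it makes $\pi$ a $1$-block code and applies the $1$-block map that keeps the symbols of $P_1$ and replaces every other symbol by its $\pi$-image; this collapses the fibres over $Q_2$.

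\paragraph{The ``main obstacle'' is not one.} Once (II) and (III) hold for $Q$, (II) makes $\mathcal{A}_{Q_1}$ and $\mathcal{A}_{Q_2}$ disjoint. A $2$-word of $Z$ with one symbol from each would then lie in $\euscript{B}(Q)$ by (III), hence occur in a point of $Q_1$ or of $Q_2$, which is absurd. No compactness argument is needed.
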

}

	\begin{proof}
	 We first prove the sufficiency. Suppose $\phi_X: X\to Z$ is an $f_1$-blowup of $Z$ at $Q_1$ and $\phi_W: W\to Z$ is an $f_2$-blowup of $Z$ at $Q_2$. Let $(\hat{Z}, \psi_X, \psi_W)$ be a fiber product of $(\phi_X, \phi_W)$ such that the following diagram commutes
     \begin{equation} 
 	\xymatrix{
 		& \hat{Z} \ar[dl]_{\psi_X} \ar[dr]^{\psi_W}& \\
        X \ar[dr]_{\phi_X}  & & Z \ar[dl]^{\phi_W}  \notag\\
 		&	Z .& }
 \end{equation}
    Let $\pi: \hat{Z}\to Z$ be given by $\pi:= \phi_X\circ\psi_X$. We claim that $\pi$ is conjugate an $f$-blowup of $Z$ at $Q_1\cup Q_2$. 
    The fact that $\hat{Z}$ is a subshift and that $\pi$ is a factor code are direct consequences of the fiber product.
    Since $\vert \phi_X^{-1} (z) \vert=1$ for $z\notin Q_1$ and $\vert \phi_W^{-1} (z) \vert=1$ for $z\notin Q_2$, by the definition of the fiber product, any point in $Z$ that is not in $Q_1\cup Q_2$ has a unique $\pi$-preimage.
    Moreover, for any $\bar{q}\in Q_1$, since $Q_1$ and $Q_2$ are disjoint, it is easy to verify that $x$ is a $\phi_X$ preimage of $\bar{q}$ if and only if $(x, \phi_{W}^{-1}(\bar{q}))$ is a $\pi$-preimage of $\bar{q}$. 
    Since the $\phi_X$-preimage of $Q_1$ is $P_1$, the $\pi$-preimage of $Q_1$ is $\hat{P_1}$, where 
    $$
    \hat{P_1}=\cup_{\bar{p}\in P_1}\{(\sigma^{i}(\bar{p}), \sigma^{i}(\phi_W^{-1}\circ f(\bar{p}))): 0\leq i\leq \per(\bar{p}) \}.
    $$ 
    Similarly,  the $\pi$-preimage of $Q_2$ is $\hat{P_2}$, where 
    $$
    \hat{P_2}=\cup_{\bar{p}\in P_2}\{(\sigma^{i}(\phi_X^{-1}\circ f(\bar{p})), \sigma^i (\bar{p})): 0\leq i\leq \per(\bar{p}) \}.
    $$ 
    {Note that $\pi: \hat{Z}\to Z$ is conjugate to an $f$-blowup of $Z$ at $Q_1\cup Q_2$ since $\hat{P_1}$ is conguate to $P_1$ and $\hat{P_2}$ is conjugate to $P_2$.}

    We now prove the necessity. Let $\pi: \hat{Z}\to Z$ be an $f$-blowup of $Z$ at $Q_1\cup Q_2$. It suffices to show that $Z$ has an $f_1$-blowup at $Q_1$.

    By recoding, we may assume that $\pi$ is a 1-block code. 
    Let $g$ be the 1-block code on $\hat{Z}$ defined by $g(c) = \pi(c)$ for all $c$ except for $c$ being a symbol in some $\bar{p}\in P_1$, in which case we define $g(c)=c$. Let $X:=g(\hat{Z})$. Let $h:X \to Z$ be defined by $h(c) = c$ except that if $c$ appears in some $\bar{p}\in P_1$, then $h(c)=\pi(c)$. Then $h$ is an $f_1$-blowup of $Z$ at $Q_1$.
\end{proof}

\begin{proposition}	
\label{right closing extension}
Let $\hat{Z}$ be an irreducible shift space and and $\pi:\hat{Z} \to Z$ be a right-closing factor map. If $Z$ is sofic, then so is  $\hat{Z}$.  In particular, an irreducible blowup of an irreducible sofic shift is sofic.
\end{proposition}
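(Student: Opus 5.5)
We show that $\hat{Z}$ has only finitely many follower sets; by the standard characterization of sofic shifts through follower sets \cite{LM}, this makes $\hat{Z}$ sofic. First normalize. After recoding (\cite[Exercise 5.1.11]{LM}) we may take $\pi$ to be a right-resolving $1$-block code $\Pi_\infty$. Since $Z$ is sofic and is the factor of the irreducible shift $\hat{Z}$, it is irreducible, so it has a right-resolving follower-separated presentation $(G,\mathcal{L})$, namely its minimal right-resolving presentation. Passing to a higher block presentation of $\hat{Z}$ if needed, we further assume that $\hat{Z}$ lives on a finite alphabet $\mathcal{A}$ and that $\pi$ is right-resolving with respect to $1$-blocks.

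Next we relate follower sets in $\hat{Z}$ to follower sets in $Z$. For a word $\hat{w}\in\euscript{B}(\hat{Z})$ ending in a symbol $a$, set $w=\Pi(\hat{w})$. We expect the follower set $F_{\hat{Z}}(\hat{w})$ to be determined by the pair $(a, F_Z(w))$, together with a bounded amount of additional data. The reason is right-resolvingness: a right-infinite ray in $\hat{Z}$ starting from $a$ is uniquely determined by its image. So $F_{\hat{Z}}(\hat{w})$ is the set of $\Pi$-lifts starting at $a$ of those words in $F_Z(w)$ that actually extend $\hat{w}$.

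The difficulty is the word ``actually''. It can happen that a lift starting at $a$ exists, and that the image word is in $F_Z(w)$, while the lift still does not follow $\hat{w}$ in $\hat{Z}$; in other words, the left context matters. This is the main obstacle, and we would handle it with the fiber-product idea. Form the fiber product $W$ of $\pi$ with the cover $\lambda_G:X_G\to Z$ given by the presentation. By Proposition~\ref{fibre}, the arm $W\to X_G$ is right-resolving, since $\pi$ is. Then $\hat{Z}$ is a factor of $W$ via the other arm. It therefore suffices to exhibit $\hat{Z}$ as a factor of an SFT. To do this, we would show that some irreducible component $W'$ of $W$ projecting onto $\hat{Z}$ is an SFT, or at least sofic. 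Because the arm $W'\to X_G$ is right-resolving onto an SFT, and a right-resolving (hence finite-to-one, right-closing) extension of an irreducible SFT that is itself irreducible is again an SFT, the component $W'$ is an SFT. We would check this last claim directly: a right-resolving lift of a vertex walk is determined by its starting symbol, so $W'$ is described by finitely many allowed $2$-blocks, namely pairs (vertex of $G$, symbol of $\mathcal{A}$) joined by edges. Some care is needed to choose the component so that its image is all of $\hat{Z}$. For this we would use irreducibility of $\hat{Z}$: pick a point $\hat{z}$ with dense forward orbit, lift it to $W$, and take the irreducible piece containing the $\omega$-limit set of the lift, as in the moreover part of Theorem~\ref{embedding_into_near_markov}.

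Finally, the ``in particular'' statement follows because blowups are right-closing, as noted after their definition.
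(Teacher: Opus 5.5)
Your overall strategy is the paper's: form the fiber product of $\pi$ with the minimal right-resolving presentation $\lambda_G: X_G\to Z$, take the $\omega$-limit set $W'$ of a lift $w=(x,\hat z)$ of a point $\hat z$ with dense forward orbit, show that $W'$ is an SFT using the right-resolving arm $\psi:W\to X_G$, and conclude because $W'$ maps onto $\hat Z$. (The follower-set discussion at the start plays no role.) However, the step that carries the whole proof is not established.

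\textbf{Surjectivity onto $X_G$ is missing.} You assert that $\psi$ maps $W'$ \emph{onto} $X_G$, but the care you describe only ensures that $W'$ maps onto $\hat Z$. In fact $\psi(W')=\omega(x)$, where $x$ is an arbitrary $\lambda_G$-preimage of $\pi(\hat z)$, and nothing in your argument forces $\omega(x)=X_G$. The paper supplies exactly this step. Since the minimal right-resolving presentation is finite-to-one, the argument of \cite[Lemma 9.1.13]{LM} shows that every preimage of a point with dense forward orbit also has dense forward orbit. Concretely, if a word $u$ of $X_G$ stops occurring in $x_{[n,\infty)}$, then $\omega(x)$ lies in the SFT of paths avoiding $u$. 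That SFT has entropy less than $h(X_G)=h(Z)$, so $\lambda_G(\omega(x))=\omega(\pi(\hat z))$ cannot be all of $Z$. Surjectivity cannot be dropped: the inclusion of the even shift into the full $2$-shift is a right-resolving $1$-block map from an irreducible non-SFT into an irreducible SFT.

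\textbf{The direct check of finite type is invalid.} You propose to verify directly that an irreducible right-resolving extension of an irreducible SFT is an SFT. That a right-resolving lift is determined by its initial symbol only shows that the $1$-step SFT $X_A$ spanned by the $2$-blocks of $W'$ still maps right-resolvingly into $X_G$. It does not show $X_A=W'$; in the even shift example, $X_A$ is the full $2$-shift. The missing ingredient is entropy:
\begin{itemize}
\item $X_A$ is irreducible because $W'$ is;
\item $h(X_A)=h(X_G)$ because right-resolving maps are finite-to-one;
\item $h(W')=h(X_G)$ because $\psi(W')=X_G$, which is where surjectivity enters;
\item entropy minimality of the irreducible SFT $X_A$ then forces $W'=X_A$.
\end{itemize}

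Relatedly, ``the irreducible piece containing the $\omega$-limit set'' is not a well-defined object, since $W$ is not yet known to be sofic. A clean fix: the forward ray of $w$ eventually stays in one irreducible component $C$ of the graph of $2$-blocks of $W$, so $\omega(w)\subseteq X_C$. The same entropy argument then gives $\omega(w)=X_C$, which delivers irreducibility and finite type at once.
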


The proof of Proposition~\ref{right closing extension} is a modification of an analogous result for SFTs, namely~\cite[Proposition 4.12 (alternative proof, due to Bruce Kitchens)]{BMT} as follows.

\begin{proof} Let $\phi:X \to Z$ be the minimal right-resolving presentation of $Z$. 
Let $\hat{z}$ be a point in $\hat{Z}$ with a dense forward orbit. Then $z:=\pi(\hat{z})$ has a dense forward orbit in $Z$. Let $x\in X$ be any $\phi$-preimage of $z$. Since $\phi:X\to Y$ is the minimal right-resolving presentation which is in particular finite-to-one, we infer from the proof of \cite[Lemma 9.1.13]{LM} (by replacing ``$x$ is doubly transitive" with ``$x$ has a dense forward orbit") that $x$  has a dense forward orbit in $X$. 

Let $(\psi:W \to X, \eta: W \to \hat{Z})$ be the fiber product of $(\phi,\pi)$. Then $w:=(x, \hat{z}) \in W$.  Let $W'\subset W$ be the 
$\omega$-limit set of $w$, i.e.,  the set of limit points of the forward orbit of $w$. Then $W'$ is an irreducible shift space. Since the forward orbit of $x$ is dense in $X$, we have  $\psi(W') = X$; since the forward orbit of $\hat{z}$ is dense in $\hat{Z}$, we have $\eta(W') = \hat{Z}$.

We claim that $W'$ is an (irreducible) SFT. It will then follow that  $\hat{Z}$ is sofic.

To prove the claim, first observe that since $\pi$ is right-closing, by Proposition \ref{fibre}, so is $\psi$. By recoding, up to conjugacy we can assume that 
$\psi$ is actually right-resolving and $X$ is a 1-step SFT. Let $A$ be the square 0-1 matrix indexed by the alphabet of $W'$ and defined by 
$$A_{ij} = 1 \quad \mbox{iff}  \quad ij \in \euscript{B}(W').$$  
Let $X_A$ denote the 1-step SFT defined by $A$.
Then $W' \subseteq X_A$. Since  $W' \subseteq X_A$ and  $X$ is a 1-step SFT, $\psi$ extends to a right-resolving map from $X_A$ into $X$.  But since $\psi(W') = X$, we must have 
$\psi(X_A) = X.$ So, we have $\psi(X_A) = X = \psi(W')$. Since $W'$ is irreducible, so is $A$ and thus so is $X_A$. 

Since $X_A$ is irreducible, it is entropy minimal. Since $\psi$ is right-resolving, 
$h(X_A) = h(X) = h(W')$. It follows that $W' = X_A$ is an irreducible SFT as desired. 

Finally, since the factor map from an irreducible blowup of an irreducible sofic shift onto the sofic shift is necessarily right-closing, it follows that every irreducible blowup of an irreducible sofic shift is itself sofic.
\end{proof}

The irreducibility assumption on the extension $\hat{Z}$ in Proposition \ref{right closing extension} cannot be removed as shown in the following example.

\begin{example}\label{example:non_sofic_2_to_1_extension}
    Let $Z$ be any SFT with positive entropy (for instance $Z=\{0,1\}^\mathbb{Z}$). Let $Y$  be a non-sofic subshift that embeds in $Z$ via $\iota:Y \to Z$. Let $\hat{Z}$ be the disjoint union of $Y$ and $Z$, and let $\pi:\hat{Z} \to Z$ be given by 
    \[
    \pi(\hat z)= \begin{cases}
        \hat z & \hat z \in Z\\
        \iota(\hat z) & \hat z \in Y.
    \end{cases}
    \]
    Then $\pi:\hat Z \to Z$ is a bi-closing factor map, but $\hat{Z}$ is non-sofic.
    \end{example} 


\section{Finite Decidablity of Conditions in Theorem \ref{embedding_into_near_markov}} \label{section:finite_decidablity}

In this section we show that  there is a finite procedure which decides,  given an irreducible near markov shift $Y$ and an irreducible sofic shift $Z$ with $h(Z) < h(Y)$,
whether $Z$ embeds into $Y$ as a proper subshift.

Let $Z$ be an irreducible sofic shift, $(G, L)$ be an irreducible presentation of $Z$ and $Q\subseteq Z$ be a finite subshift. We say that $(G, L)$ is {\em $Q$-adapted} if 
$Q$ is presented by $(H, L)$ where $H$ is the subgraph of $G$ consisting of all edges whose labels are in $\mathcal{A}_Q$.

\begin{lemma} \label{pre-SFT-is-SFT}
    Let $(G, L)$ be an irreducible presentation of an (irreducible) sofic shift $Z$, and $Q\subseteq Z$ be a finite subshift. Let $q$ denote the largest least period  of periodic points in $Q$. Then  $(G^{[2q+1]}, L^{[2q+1]})$ is a $Q^{[2q+1]}$-adapted presentation of $Z^{[2q+1]}$.
\end{lemma}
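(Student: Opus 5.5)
The proof would simply unwind the definitions. Write $N:=2q+1$. Recall that $G^{[N]}$ has as vertices the paths of length $N-1$ in $G$ and as edges the paths $e_1\cdots e_N$ of length $N$ in $G$. The labeling is $L^{[N]}(e_1\cdots e_N)=L(e_1)\cdots L(e_N)$, viewed as a symbol of $Z^{[N]}$. Then $(G^{[N]},L^{[N]})$ is an irreducible presentation of $Z^{[N]}$. Let $H$ be the subgraph of $G^{[N]}$ consisting of the edges whose labels lie in $\mathcal{A}_{Q^{[N]}}$, i.e.\ whose labels are $N$-blocks occurring in points of $Q$. We must show that the set of labels of bi-infinite paths in $H$ is exactly $Q^{[N]}$.

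\emph{Containment $Q^{[N]}\subseteq$ labels of $H$.} Take $\bar q\in Q\subseteq Z$. Since $(G,L)$ presents $Z$, there is a bi-infinite path $\xi$ in $G$ with label $\bar q$. The corresponding path in $G^{[N]}$ (sliding $N$-windows of $\xi$) has as labels the $N$-blocks of $\bar q$. All of these lie in $\mathcal{A}_{Q^{[N]}}$, so this path lies in $H$, and its label is $\bar q^{[N]}$.

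\emph{Reverse containment.} A bi-infinite path in $H$ comes from a bi-infinite path $\xi$ in $G$. Its label is $x^{[N]}$, where $x=L(\xi)\in Z$ and every $N$-block $x_{[i,i+2q]}$ occurs in some point $q^{(i)}\in Q$. We may shift $q^{(i)}$ so that $x_{[i,i+2q]}=q^{(i)}_{[i,i+2q]}$; let $p_i\le q$ be its least period.

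The key step is to show that $q^{(i)}=q^{(i+1)}$ for all $i$. The overlap word $w=x_{[i+1,i+2q]}$ has length $2q\ge p_i+p_{i+1}$, and it has both periods $p_i$ and $p_{i+1}$. By the Fine--Wilf theorem, $w$ has period $d=\gcd(p_i,p_{i+1})$. A periodic point is determined by any block of length at least its period. Both $q^{(i)}$ and $q^{(i+1)}$ agree with $w$ on $[i+1,i+2q]$, which has length at least $\max(p_i,p_{i+1})$. Hence both coincide with the point of period $d$ generated by $w$, so $q^{(i)}=q^{(i+1)}$. By induction in both directions, all $q^{(i)}$ equal one point $\bar q\in Q$, so $x=\bar q$ and the label $x^{[N]}$ lies in $Q^{[N]}$.

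The main (mild) obstacle is this overlap/Fine--Wilf step. It is exactly where the window length $2q+1$ is used: consecutive windows then overlap in $2q\ge p_i+p_{i+1}$ symbols. The rest is bookkeeping for the higher block presentation. Note also that irreducibility of $G^{[N]}$ is inherited from $G$, so the presentation is still irreducible as the definition of $Q$-adapted requires.
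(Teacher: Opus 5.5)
Your proof is correct and follows essentially the same route as the paper: the windows of length $2q+1$ along a path in $H$ overlap in $2q$ symbols, and a $2q$-block occurring in $Q$ determines a unique point of $Q$ (the paper cites the argument of \cite[Lemma 10.1.6]{LM} for this, where you invoke Fine--Wilf). Your version is slightly more complete than the paper's. You show $q^{(i)}=q^{(i+1)}$ as points, so the phase is tracked and not just the orbit. You also verify explicitly that $Q^{[2q+1]}$ is contained in the set of labels of bi-infinite paths in $H$.
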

\begin{proof}
By the argument in~\cite[Lemma 10.1,6]{LM}, one sees that for any finite subshift Q consisting of periodic points of least period at most $q$, any word of length $2q$ occurring in $Q$ is contained in exactly one orbit of $Q$. Thus, in the labeled subgraph $(H,L^{[2q+1]})$ of $(G^{[2q+1]}, L^{[2q+1]})$ consisting of all edges labelled by words of length $2q+1$ in $Q$, for any path $ee'$ of length 2, $L^{[2q+1]}(e)$
and $L^{[2q+1]}(e')$ are in the same orbit of $Q$.
It follows that 
$(G^{[2q+1]}, L^{[2q+1]})$ is $Q^{[2q+1]}$-adapted. 
\end{proof}



Let $Z$ be an irreducible subshift, $P$ and $Q$ be finite subshifts with $Q\subseteq Z$. Let $(G,L)$ be an irreducible $Q$-adapted presentation of $Z$, and let $E_Q$ be the set of edges in the subgraph  representing $Q$. A new labeling $L^*: \mathcal{E}(G)\to (\mathcal{A_Z} \setminus \mathcal{A}_{Q})\cup \mathcal{A}_P$ is said to be  {\em $f$-adapted} if it satisfies the following conditions:
\begin{enumerate}
    \item[(i)] $L^*(e)=L(e)$ for all $e\in E(G)\setminus E_Q$;
    \item[(ii)] $L^*(e)\in P$ for all $e\in E_Q$ and for each $\bar{p}\in P$ there is an edge $e\in E_Q$ with $L^*(e)=\bar{p}$;
    \item[(iii)] $f(L^*(e))= L(e)$ for all $e\in E_Q$;
    \item[(iv)] {For each $g, g'\in E(G)\setminus E_Q$ and each $e, e'\in E_Q$ such that $t(g)=i(e)$ and $t(g')=i(e')$, if $L(g)=L(g')$ and $L(e)=L(e')$, then $L^*(e)=L^*(e')$.}
    \item[(v)] For each $h, h'\in E(G) \setminus  E_Q$ and each $e, e'\in E_Q$ such that $i(h)=t(e)$ and $i(h')=t(e')$, if $L(h)=L(h')$ and $L(e)=L(e')$, then $L^*(e)=L^*(e')$.
    \item[(vi)] If $ee'$ is a path of length $2$ in $G$ and $e, e'\in E_Q$, then $L^*(e')=\sigma (L^*(e))$.
\end{enumerate}

\begin{proposition}
\label{relabel}
    Let $Z$ be an irreducible sofic shift, $P$ and $Q$ be finite subshifts with $Q\subseteq Z$, and $f:P\to Q$ a factor code. Suppose $Z, P, Q$ are recoded so that Conditions (I), (II), (III) hold. Let $(G, L)$ be an irreducible $Q$-adapted presentation of $Z$.  Then $Z$ admits an irreducible $f$-blowup if and only if  there is an $f$-adapted relabelling $L^*$ of $(G, L)$. In fact, any irreducible $f$-blowup  $\pi:\hat{Z} \to Z$ in standard form is induced by an $f$-adapted relabelling $L^*$ of $(G, L)$ in the sense that the range of $L^*_\infty$ is $\hat{Z}$ and $\pi$ maps $L^*$-labellings to $L$-labellings.
\end{proposition}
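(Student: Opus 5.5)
The plan is to prove the two directions separately, each time using the combinatorial description of blowups in Corollary~\ref{simpler-chara-blowup}. Throughout, I identify a point $\bar p\in P$ with its symbol $\bar p_0$, which is legitimate by Condition (II), and similarly for $Q$. I also assume $Q\neq Z$; otherwise the statement is degenerate.

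\emph{Sufficiency.} Given an $f$-adapted relabelling $L^*$, set $\hat Z:=L^*_\infty(X_G)$. This is closed and shift-invariant, and it is irreducible because $G$ is. Let $\pi$ be the $1$-block code that is the identity on $\mathcal{A}_Z\setminus\mathcal{A}_Q$ and sends $\bar p\mapsto F(\bar p)$ on $\mathcal{A}_P$. By (i) and (iii), $\pi\circ L^*_\infty=L_\infty$, so $\pi$ maps $\hat Z$ onto $Z$. The core claim is that the $L^*$-labels along any path of $G$ are determined by its $L$-labels whenever that $L$-label sequence $z$ is not in $Q$.
\begin{enumerate}
\item If $z\notin Q$, then by (III) some coordinate of $z$ lies outside $\mathcal{A}_Q$. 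Hence each maximal run of $E_Q$-edges in a path presenting $z$ is bounded on at least one side by a non-$Q$ edge.
\item Condition (iv) (respectively (v)) says the $L^*$-label of the first (respectively last) edge of such a run depends only on the $L$-labels of the boundary pair.
\item Condition (vi) then propagates this label along the whole run.
\end{enumerate}
So $|\pi^{-1}(z)|=1$ for $z\notin Q$.

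Next, a preimage of a point of $Q$ uses only $\mathcal{A}_P$-symbols. It therefore comes from a path lying entirely in $H$, since $(G,L)$ is $Q$-adapted, and by (vi) it is a single point of $P$. Condition (ii) shows every point of $P$ occurs, and (iii) shows $\pi|_P=f$. Hence $\pi$ is an irreducible $f$-blowup.

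\emph{Necessity.} Given an irreducible $f$-blowup, Proposition~\ref{2blowup_conj} lets me take it in standard form $\pi:\hat Z\to Z$, with associated functions $d^\pm$. These satisfy conditions (1)--(3) of Corollary~\ref{simpler-chara-blowup} and, by irreducibility, its ``moreover'' clause. For $e\in E_Q$ I define $L^*(e)$ as follows.
\begin{itemize}
\item Choose a finite path $g\,e_1\cdots e_k\cdots e_n\,h$ in $G$ with $e=e_k$, all $e_j\in E_Q$, and $g,h\notin E_Q$. Such a path exists by irreducibility of $G$: go from a non-$Q$ edge to $e$, then from $e$ to a non-$Q$ edge, and take the run containing $e$.
\item Set $L^*(e):=\sigma^{k-1}\bigl(d^+(L(g),L(e_1))\bigr)$.
\end{itemize}

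The main obstacle is showing this is well defined, independent of the chosen path. Take two such paths through $e$. Splice the left part of the first (from $g$ to $e$) with the right part of the second (from $e$ to $h'$). The result is again a path in $G$, and its label is an admissible word of $Z$ of the form in Corollary~\ref{simpler-chara-blowup}(3). The parity identity \eqref{simpler-parity-condition} then equates the left-defined value with the $d^-$-defined value coming from $h'$. Applying this to all pairings, every left-defined value and every right-defined value at $e$ coincide.

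With well-definedness in hand, the conditions follow.
\begin{itemize}
\item Condition (iv) holds by construction, and (v) holds by the same identity used from the right.
\item Condition (vi) holds because consecutive $Q$-edges in a run get consecutive shifts.
\item Condition (iii) holds because $d^+(a,q)\in f^{-1}(\bar q)$.
\item Condition (ii) holds because the ``moreover'' clause puts a point of each orbit of $P$ on some edge, and (vi) then spreads the labels to the whole orbit.
\end{itemize}

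Finally, I must check that $L^*_\infty(X_G)$ is exactly $\hat Z$ and that $\pi$ maps $L^*$-labellings to $L$-labellings. By the sufficiency part, $L^*_\infty(X_G)$ is an $f$-blowup in standard form. Its functions $d^\pm$ agree with those of $\hat Z$ by construction. In standard form, the symbols of the preimage of $z\notin Q$ are forced by $z$ together with $d^\pm$, and $\pi^{-1}(Q)=P$ in both. Hence the two shifts coincide as sets, which also gives the ``in fact'' statement.
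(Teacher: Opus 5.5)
Your argument is essentially the paper's. Sufficiency is the same run-by-run uniqueness argument from (i), (iv), (v), (vi), and necessity builds $L^*$ from the entry data, with well-definedness coming from the parity identity of Corollary~\ref{simpler-chara-blowup}(3). Your variations are minor: you work with the $d^{\pm}$ of a standard-form blowup, you check well-definedness by splicing two runs instead of fixing a common exit, and you justify the ``in fact'' clause by matching $d^{\pm}$, which the paper only asserts.

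One caveat, inherited from the paper rather than introduced by you: your step ``(ii) shows every point of $P$ occurs'', like the paper's corresponding step, needs (ii) strengthened to require an edge lying on a bi-infinite path in $H$. Condition (ii) only puts each $\bar p\in P$ on \emph{some} edge of $H$, and if that edge lies on no bi-infinite path in $H$, then $\bar p$ need not belong to $L^*_\infty(X_G)$. For example, take vertices $v,w,u$; edges $v\to v$ and $u\to w$ with $L$-label $a$; edges $v\to w$, $w\to v$, $w\to u$ with $L$-labels $b,c,b'$; $Q=\{a^\infty\}$ and $P=\{0^\infty,1^\infty\}$; and $L^*=0$ on the loop, $L^*=1$ on $u\to w$. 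This satisfies (i)--(vi), but $1^\infty\notin L^*_\infty(X_G)$, so $\pi^{-1}(Q)\neq P$.
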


\begin{proof}
   {\bf (Sufficiency):} Let $L^*$ be an $f$-adapted relabelling of $(G, L)$. Let $\hat{Z}$ be the irreducible sofic shift represented by $(G, L^*)$ and define $\pi: \hat{Z}\to Z$ by: $\pi(\hat{z})$ is the $L$-labelling of any bi-infinite path in $G$ whose $L^*$ labelling is $\hat{z}$.  Note from item (i) and (iii) that $\pi$ is well-defined. 

    
    We will prove that $\pi: \hat{Z}\to Z$ is an irreducible $f$-blowup. 
    
    Since $(G, L)$ is $Q$-adapted, there is a subgraph $H\subseteq G$ such that $Q$ is presented by $(H,L)$ (whose edges are denoted by $E_Q$). By item (ii), the $L^*$-labeling of elements in $E_Q$ exhaust all points in $P$. Therefore we have $\pi^{-1}(Q)=P$. 
    
    We also need to show any $z\in Z\setminus Q$ has a unique $\pi$-preimage. It suffices to show that if $\alpha, \beta$ are two bi-infinite walks in $G$ such that $L(\alpha)=L(\beta)=z$, then $L^*(\alpha)=L^*(\beta)$. To this end, observe that for each such pair $(\alpha, \beta)$, there is a partition of the integers into $\cup_i ([a_i, a_{i+1})$ such that for each odd $i$, $\alpha_{[a_i, a_{i+1})}$ (and therefore also $\beta_{[a_i, a_{i+1})}$) is in $H$, and for each even $i$, $\alpha_{[a_i, a_{i+1})}$ (and therefore also $\beta_{[a_i, a_{i+1})}$) is in $G\setminus H$. Here, if $a_i=-\infty$ (resp. $a_i=\infty$) for some $i$, then $[a_i, a_{i+1})$ (resp. $[a_{i-1}, a_i)$) is $(-\infty, a_{i+1})$ (resp. $[a_{i-1}, \infty)$)


     By item (i), for even $i$, $L^*(\alpha)_{[a_i, a_{i+1})}=L(\alpha)_{[a_i, a_{i+1})}$ and $L^*(\beta)_{[a_i, a_{i+1})}=L(\beta)_{[a_i, a_{i+1})}$. Recalling that $L(\alpha)=L(\beta)$, we have 
    \begin{equation} \label{even_i}
    L^*(\alpha)_{[a_i, a_{i+1})}=L^*(\beta)_{[a_i, a_{i+1})} \qquad \mbox{for  all even $i$}.
    \end{equation}
    
    Now for each odd $i$, $\alpha_{[a_i, a_{i+1})}$ and $\beta_{[a_i, a_{i+1})}$ are both paths in $H$. Moreover, since $L(\alpha)=L(\beta)\in Z\setminus Q$, we know that either $a_i\neq  -\infty$ or $a_{i+1}\neq \infty$. Now 
    \begin{itemize}
 \item    If $a_i\neq  -\infty$, then we have $\alpha_{a_{i}-1}\notin E_Q$ and $\beta_{a_{i}-1}\notin E_Q$. Moreover, observing that $t(\alpha_{a_{i}-1})=i(\alpha_{[a_i, a_{i+1})})$, $t(\beta_{a_{i}-1})=i(\beta_{[a_i, a_{i+1})})$ and $L(\alpha_{a_i-1})=L(\beta_{a_i-1})$, we infer from  item (iv) and (vi) that $L^*(\alpha_{[a_i, a_{i+1})})=L^*(\beta_{[a_i, a_{i+1})})$.
  \item    If $a_{i+1}\neq  +\infty$, we have $\alpha_{a_{i+1}}\notin E_Q$ and $ \beta_{a_{i+1}}\notin E_Q$. Moreover, observing that $t(\alpha_{[a_i, a_{i+1})})=i(\alpha_{a_{i+1}})$, $t(\beta_{[a_i, a_{i+1})})=i(\beta_{a_{i+1}})$ and $L(\alpha_{a_{i+1}})=L(\beta_{a_{i+1}})$, we infer from item (v) and (vi) that $L^*(\alpha_{[a_i, a_{i+1})})=L^*(\beta_{[a_i, a_{i+1})})$. 
  \end{itemize}

  Thus, we have 
      \begin{equation} \label{odd_i}
    L^*(\alpha)_{[a_i, a_{i+1})}=L^*(\beta)_{[a_i, a_{i+1})} \qquad \mbox{for  all odd $i$}.
    \end{equation}

Combining Equation (\ref{even_i}) and (\ref{odd_i}), we have $L^*(\alpha)=L^*(\beta)$ and therefore $\pi$ is an $f$-blowup. Finally, note that $\hat{Z}$ is irreducible since $G$ is. Thus, the proof of the sufficiency part is completed.

{\bf (Necessity):}  Suppose $Z$ admits an irreducible $f$-blowup. By Corollary \ref{simpler-chara-blowup}, there exist functions $c^+$ and $c^-$ such that Conditions (1)(2)(3) and the moreover part in Corollary  \ref{simpler-chara-blowup} holds.

    We now define a new edge labelling $L^*$ of $G$ by the following: 
    \begin{itemize}
        \item For each $e\in E(G)\setminus E_Q$, define $L^*(e)=L(e)$; 
        \item For each $e\in E_Q$ such that there is an $g\in E(G)\setminus E_Q$ with $t(g)=i(e)$, define $L^*(e):= c^+(L(g), L(e))$.
        \item Propagate the $L^*$-labeling to all edges in $E_Q$ in the sense that if $ee'$ is a path of length $2$ in $G$ and $e, e'$ are both in $E_Q$, then $L^*(e')=\sigma(L^*(e)).$
    \end{itemize}
To show $L^*$ is well-defined, it suffices to show that given any $e'\in E_Q$, any two edges $g, g'$ in $E(G)\setminus E_Q$ that enter the subgraph $H$, and any path $\gamma$ in $H$ from $t(g)$ to $i(e')$, any path $\gamma'$ in $H$ from $t(g')$ to $i(e')$, then $L^*(e')$ defined by using $g, \gamma$ and using $g' \gamma'$ are the same, i.e., 
\begin{equation} \label{dif-entrance-same-label}
    \sigma^{\vert \gamma \vert}(c^+(L(g), L(\gamma_1)))=\sigma^{\vert \gamma'\vert} (c^+(L(g'), L(\gamma'_1))).
\end{equation}
To see this, let $\xi$ be a path in $H$ from $t(e')$ to $i(h)$ for some edge $h\in E(G)\setminus E_Q$. Denote $k:= \vert \xi \vert$.

Now consider the path $g\gamma e' \xi h$. Since $Z$ is presented by $(H,L)$, we have $L(\gamma e' \xi)$ is a subblock of some point in $Q$, and that $L(g)$ and $L(h)$ are not in $\mathcal{A}_Q$. Thus, we infer from Equation (\ref{simpler-parity-condition}) that 
\begin{equation} \label{entrance_1}
\sigma^{\vert \gamma \vert+k+1} (c^+(L(g), L(\gamma_1)))= c^-(L(\xi_{k}), L(h)).
\end{equation}
The same argument applied with $g\gamma e' \xi h$ replaced by $g' \gamma' e' \xi h$ gives 
\begin{equation} \label{entrance_2}
\sigma^{\vert \gamma' \vert+k+1} (c^+(L(g'), L(\gamma'_1)))= c^-(L(\xi_{k}), L(h)).
\end{equation}
Applying $\sigma^{-k-1}$ to both sides of (\ref{entrance_1}) and (\ref{entrance_2}), we obtain Equation (\ref{dif-entrance-same-label}). Therefore $L^*$ is well-defined.

To see $L^*$ is $f$-adapted, it is clear from the definition of $L^*$ that it satisfies Condition (i) (ii) (iii) (iv) (vi). The proof that $L^*$ also satisfies Condition (v) is similar to the argument we used to prove Equation (\ref{dif-entrance-same-label}). Thus, $L^*$ is an $f$-adapted relabelling of $(G, L)$.

Finally, for the ``in fact" part of the proposition, supppose we have an irreducible $f$-blowup in standand form. Then by the necessity proof, there is an $f$-adapted relabelling $L^*$ of $(G, L)$, which, by the sufficiency proof, gives us an irreducible $f$-blowup in standard form, and it is indeed the same as the $f$-blowup we started with.

\end{proof}

\begin{corollary} \label{consequences}
 \label{SFT-factor-through-blowup}
 Let $Z$ be an irreducible sofic shift, $P, Q$ be finite subshifts with $Q\subseteq Z$, and $f:P\to Q$ be a factor code. Then we have the following:
 \begin{enumerate}
     \item There is a finite procedure that decides if $Z$ admits an irreducible $f$-blowup.
     \item Any irreducible $f$-blowup of $Z$ is sofic.
     \item Let $\pi: \hat{Z}\to Z$  be an irreducible $f$-blowup of $Z$,  $\phi: X\to Z$ be an irreducible SFT cover of $Z$. Then $\phi$ factors through $\pi$, i.e. there is a factor code $\psi: X \to \hat{Z}$ such that the following diagram commutes: 
 \begin{equation} 
 	\xymatrix{
 		X \ar[dr]_\phi \ar[rr]^\psi & & \hat{Z} \ar[dl]^\pi  \notag\\
 		&	Z & }.
 \end{equation}
 \end{enumerate}
\end{corollary}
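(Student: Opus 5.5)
The plan is to derive all three items from Proposition~\ref{relabel}, after first putting everything in the normal form that proposition needs. Recode $Z, P, Q$ by higher blocks so that Conditions (I), (II), (III) hold; this changes nothing up to conjugacy. Fix an irreducible presentation $(G,L)$ of $Z$, for instance the minimal right-resolving presentation, or the graph of any irreducible SFT cover. Then pass to $(G^{[2q+1]},L^{[2q+1]})$, which by Lemma~\ref{pre-SFT-is-SFT} is $Q$-adapted after the corresponding recoding. Proposition~\ref{relabel} now says that irreducible $f$-blowups of $Z$ correspond exactly to $f$-adapted relabellings $L^*$ of this fixed finite labelled graph.

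\textbf{Item (1).} The set of candidate relabellings is finite: $L^*$ is fixed off $E_Q$, and on $E_Q$ it takes values in the finite alphabet $\mathcal{A}_P$, which we identify with points of $P$ via (II). Each of conditions (i)--(vi) is a finite check on edges and length-two paths of $G$. So the procedure enumerates all functions $E_Q\to\mathcal{A}_P$ and tests the conditions. The one point to verify is that the recoding and the choice of adapted presentation are effective, given a presentation of $Z$ and the finite data $P, Q, f$; both are explicit constructions.

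\textbf{Item (2).} This is immediate from Proposition~\ref{right closing extension}, since blowups are right-closing. Alternatively, by the ``in fact'' part of Proposition~\ref{relabel} together with Proposition~\ref{2blowup_conj}, any irreducible $f$-blowup is conjugate to one presented by the finite labelled graph $(G,L^*)$, and so it is sofic.

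\textbf{Item (3).} This is the main step, and the plan is to apply Proposition~\ref{relabel} to a presentation coming from $\phi$ itself.
\begin{enumerate}
\item Recode $X$ as an edge shift $X_G$ with $\phi=L_\infty$ a $1$-block labelling. Then $(G,L)$ is an irreducible presentation of $Z$.
\item Pass to the higher-block graph $G^{[2q+1]}$. Its edge shift is conjugate to $X$, and by Lemma~\ref{pre-SFT-is-SFT} $(G^{[2q+1]},L^{[2q+1]})$ is a $Q$-adapted presentation of $Z$, after recoding $Z$ compatibly with (I)--(III).
\item By Proposition~\ref{2blowup_conj}, replace $\pi$ by an $f$-blowup in standard form.
\item By the ``in fact'' part of Proposition~\ref{relabel}, this standard-form $\pi$ is induced by an $f$-adapted relabelling $L^*$ of the graph from step 2. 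That is, $\hat Z=L^*_\infty(X_{G^{[2q+1]}})$, and $\pi$ sends the $L^*$-label of a path to its $L$-label.
\item Define $\psi$ as $L^*_\infty$ composed with the conjugacy $X\cong X_{G^{[2q+1]}}$, followed by the conjugacies undoing the recodings.
\end{enumerate}
Then $\psi$ is a sliding block code onto $\hat Z$. Also $\pi\circ\psi=\phi$, because $\pi(L^*(\text{path}))=L(\text{path})$ edge by edge, by conditions (i) and (iii).

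The expected obstacle is bookkeeping rather than a new idea. The recodings applied to $Z$, to $\hat Z$, and to $X$ must be mutually compatible, so that the diagram commutes with the original $\phi$ and $\pi$ and not only with their recoded versions. Recoding by higher blocks is functorial with respect to $1$-block labellings, so I expect this to go through by conjugating every map in the diagram by the same higher-block maps.
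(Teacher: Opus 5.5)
Your proposal is correct and follows the paper's proof essentially step for step. For (1) you brute-force search for $f$-adapted relabellings of an irreducible $Q$-adapted presentation satisfying (I)--(III). For (3) you put $\pi$ in standard form via Proposition~\ref{2blowup_conj}, make the graph of $\phi$ $Q$-adapted through Lemma~\ref{pre-SFT-is-SFT}, and compose $L^*_\infty$ from the ``in fact'' part of Proposition~\ref{relabel} with the recoding conjugacies.

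For (2), your alternative argument is exactly the paper's. Your primary argument, citing Proposition~\ref{right closing extension} since blowups are right-closing, is also valid and shorter; the paper's route has the advantage of being constructive, as its remark notes.
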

\begin{proof}
     Given any presentation of $Z$,  
    the classical subset construction gives a right-resolving presentation of $Z$. Using that $Z$ is irreducible,  
    by passing to an irreducible component of full entropy, we can also assume that this presentation is irreducible.  
    Now, according to Lemma \ref{pre-SFT-is-SFT}, by passing to a higher block recoding we arrive at a irreducible, $Q$-adapted presentation $(G, L)$ of $Z$  that also satisfies Conditions (I) (II) and (III). By Proposition \ref{relabel}, $Z$ admits an irreducible $f$-blowup if and only if there is an $f$-adapted relablling $L^*$ of $(G, L)$, which is clearly finitely decidable by brute force. This proves (1).

    To see (2), note from Proposition \ref{relabel} that any irreducible $f$-blowup of $Z$ in standard form must be sofic since it is given by a labelled graph $(G, L^*)$. Moreover, Proposition \ref{2blowup_conj} shows that any irreducible $f$-blowup of $Z$ is conjugate to some irreducible $f$-blowup of $Z$ in standard form. It then follows that any irreducible $f$-blowup of $Z$ is sofic.

    We now prove (3). By Proposition~\ref{2blowup_conj}, we may assume that $\pi$ is in standard form. By a 
    standard recoding, there is an irreducible labelled graph $(G, L)$ and a conjugacy $\tau_1: X\to X_G$ such that the left part of Diagram (\ref{factor-through-diagram-proof}) commutes. Now by Lemma \ref{pre-SFT-is-SFT}, there exist conjugacies
    (by recoding to higher blocks) $\tau_2: X_G\to X_{G^{[2q+1]}}$ and $\tau_3: Z\to Z^{[2q+1]}$ such that $(G^{[2q+1]}, L^{[2q+1]})$ is $Q$-adapted and the middle part of Diagram (\ref{factor-through-diagram-proof}) commutes. By passing to further high-block recoding if necessary, we may also assume that conditions (I)(II)(III) are satisfied. Finally, by the in fact part of Proposition \ref{relabel}, there is a relabelling map $L^*$ of $(G^{[2q+1]}, L^{[2q+1]})$, which induces the blowup in standard form $\pi':= \tau_3 \circ \pi \circ \tau_4: \hat{Z}^{[2q+1]}\to Z^{[2q+1]}$, and the right part of Diagram (\ref{factor-through-diagram-proof}) commutes. The desired result then follows by taking $\psi:=\tau_4\circ L^*_{\infty} \circ \tau_2 \circ \tau_1$. 
\begin{equation}  \label{factor-through-diagram-proof}
 	\xymatrix{
 		X \ar[dr]_\phi \ar[r]^{\tau_1} &  X_G \ar[d]^{L_{\infty}}  \ar[r]^{\tau_2} & X_{G^{[2q+1]}} \ar[r]^{L^*_\infty} \ar[d]_{L^{[2q+1]}_\infty}& \hat{Z}^{[2q+1]} \ar[dl]^{\pi'} \ar[r]^{\tau_4} & \hat{Z}\\
 		&	Z \ar[r]_{\tau_3} & Z^{[2q+1]} }.
 \end{equation}
\end{proof}

\begin{remark}
    Note that item (2) of Proposition \ref{consequences} gives a constructive proof of Proposition \ref{right closing extension} in the case of irreducible blowups.
\end{remark}

Propositions~\ref{p-periodic-finitely-decidable} and~\ref{q-periodic-characterization-by-disjointness} below give finite procedures for verifying 
$p$-periodicity.

\begin{proposition} \label{p-periodic-finitely-decidable}
Let $Y$ be an irreducible sofic shift and $\pi: X_G\to Y$ be its minimal right-resolving cover. Let $\mathcal{L}$ be the labelling map defined on the edges of $G$ that induces $\pi$. Then $Y$ is $p$-periodic if and only if both of the following hold:
    \begin{enumerate}
			\item $X_G$ is $p$-periodic;
			\item Let $k =  |V_G|^2$. For all pairs $(e_{-k} \ldots e_k), (g_{-k} \ldots g_k)$ of edge walks in $G$ with $\mathcal{L}(e_{-k}\ldots e_k) =\mathcal{L}(g_{-k} \ldots g_k)$, there exists $0\leq i\leq p-1$ such that $[e_0]\subseteq X_i$ and $[g_0]\subseteq X_i$, where $\biguplus_{i=0}^{p-1}X_i $ is the unique (up to cyclic permutation) cyclic partition of $X_G$.
	\end{enumerate}
\end{proposition}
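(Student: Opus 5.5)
The plan is to prove both directions by comparing the cyclic partition of $X_G$ with candidate cyclic partitions of $Y$ through the $1$-block map $\pi=\mathcal{L}_\infty$. The two facts about the minimal right-resolving cover that I will rely on are that it is finite-to-one, and that $\pi$ is injective on preimages of points with dense forward and backward orbits, in the sense that every such point has a unique preimage (this is the standard synchronizing-word property of the minimal right-resolving presentation, and the proof is of the same type as in \cite[Lemma 9.1.13]{LM}, which was already used in the proof of Proposition~\ref{right closing extension}).

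\emph{Sufficiency.} Assume (1) and (2), and let $X_G=\biguplus_{i=0}^{p-1}X_i$ be the cyclic partition. For $y\in Y$, pick any preimage $x\in\pi^{-1}(y)$ and define $Y_i$ to consist of the $y$ whose chosen preimage lies in $X_i$. To see that this is independent of the choice, take two preimages $x,x'$ of $y$. Their central $(2k+1)$-blocks are edge walks with equal labels, so (2) puts $x_0$ and $x'_0$ into the same piece $X_i$. Hence $Y_i:=\pi(X_i)$ are pairwise disjoint and cover $Y$. Each $Y_i$ is closed, since it is the continuous image of a compact set, and therefore, being finite in number and disjoint, they are clopen. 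Finally, $\sigma(Y_i)=\pi(\sigma X_i)=\pi(X_{i+1})=Y_{i+1}$, so $Y$ is $p$-periodic.

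\emph{Necessity.} Let $Y=\biguplus Y_i$ be a cyclic partition. Set $X_i':=\pi^{-1}(Y_i)$. These are clopen, disjoint, and cyclically permuted by $\sigma$, so $X_G$ is $p$-periodic, which gives (1). By uniqueness of the cyclic partition of the irreducible SFT $X_G$ up to rotation (here $p$ divides the period of $X_G$; if $X_G$ has period $p'>p$, then a $p$-cyclic partition is a coarsening of the canonical one, and "the unique cyclic partition" in (2) should be read as the one with $p$ pieces, which is still unique up to rotation since it is a coarsening), we may take $X_i=X_i'$. For (2), suppose two $(2k+1)$-walks with the same label had $e_0\in X_i$ and $g_0\in X_j$ with $i\ne j$. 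I want to extend them to bi-infinite points $x,x'$ with $\pi(x)=\pi(x')$; that would give the contradiction $\pi(x)\in Y_i\cap Y_j$.

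\emph{Main obstacle.} The key step is this extension, and it is where $k=|V_G|^2$ enters. I would consider the pair graph $G\times_{\mathcal{L}}G$, whose vertices are pairs of vertices and whose edges are pairs of edges with equal labels. The pair of walks gives a path of length $2k+1$ in this graph, which has at most $|V_G|^2$ vertices. By pigeonhole, the part from coordinate $0$ to $k$ visits some pair-vertex twice, giving a cycle, and similarly the part from $-k$ to $0$ contains a cycle. Repeating these cycles forever to the right and to the left produces a bi-infinite path in the pair graph through the pair $(e_0,g_0)$, that is, points $x,x'\in X_G$ with $x_0=e_0$, $x'_0=g_0$ and $\pi(x)=\pi(x')$. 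Hence $\pi(x)\in Y_i\cap Y_j$, contradicting disjointness. This step does not even need synchronization, only the pigeonhole, so I expect the real care to go into the indexing (whether the cycles fit inside $[-k,k]$, which is why I chose $k=|V_G|^2$) and into the identification of the cyclic partition in (1).
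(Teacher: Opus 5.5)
Your proof is correct, and only the necessity of (2) genuinely differs from the paper's. The sufficiency half is essentially the paper's. There, $Y_i$ is defined as the union of the cylinders $[\mathcal{L}(e_{-k}\cdots e_k)]$ over walks whose central edge lies in the $i$-th class, which under (2) is exactly your $\pi(X_i)$. That description makes clopenness immediate, while you get it from compactness and the finiteness of the disjoint cover.

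For (2), the paper uses a single repetition in the window. It builds periodic points $\alpha,\beta$ from cycles $e_j\cdots e_l$ and $g_j\cdots g_l$ with a common label, and puts both in one $X_i$ because their common image lies in one $Y_i$. It then transports this back to coordinate $0$ using the SFT fact that the class of an edge advances by one along walks: $[e_l]\subseteq\sigma^l(X_i)$ forces $[e_0]\subseteq X_i$, and likewise for $g$. You instead find a cycle of the label-product graph on each side of coordinate $0$. This extends the pair of walks to bi-infinite $x,x'$ with $x_0=e_0$, $x'_0=g_0$ and $\pi(x)=\pi(x')$, so the contradiction with the disjointness of the $Y_i$ is immediate and no phase propagation is needed. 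The cost is that you need cycles on both sides, which is exactly what the symmetric window $[-k,k]$ supplies; the paper's argument would also work with a one-sided window.

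Your pigeonhole on pairs of vertices is the count that $k=|V_G|^2$ actually supports. The paper phrases the repetition as $(e_j,g_j)=(e_l,g_l)$, a repetition of edge pairs, which $2k+1$ positions do not guarantee in general. Its argument still goes through with vertex pairs.

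Finally, the finite-to-one and synchronizing properties you announce at the start are never used. In both proofs all that matters is that $G$ is irreducible, so that $X_G$ has a cyclic partition, unique up to rotation, in which each edge cylinder lies in a single piece.
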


\begin{proof}
\noindent {\bf (Sufficiency)} 
Suppose conditions (1) and (2) hold. Since $X = X_G$ is a one-step SFT represented by the graph $G$, each $X_i$ is a union of cyclinder sets defined by edges in $G$. For each $i$, let $\mathcal{E}_i$ be the set of edges such that $X_i= \bigcup_{e\in \mathcal{E}_i} [e]$.
Now for each $0\leq i\leq p-1$, let 
$$Y_i:= \bigcup_{e_0\in \mathcal{E}_i}[\mathcal{L}(e_{-k}\cdots e_k)],$$ i.e., $Y_i$ is the union of cylinder sets defined by the labels of length $(2k+1)$-edge walks whose edge at the $0$-th coordinate is from $\mathcal{E}_i$. Then, $Y_0, Y_1, \cdots Y_{p-1}$ are disjoint because $\mathcal{E}_0, \mathcal{E}_1,\cdots \mathcal{E}_{p-1}$ are. Moreover, we have $Y_{i+1} = \sigma(Y_i) \mod p$ because for any edge walk $e_{-k}\cdots e_k$, $e_0\in \mathcal{E}_i$ if and only if $e_1\in \mathcal{E}_{i+1} \mod p$. Thus, $\biguplus_{i=1}^{p-1} Y_i$ is a cyclic partition of $Y$ and therefore $Y$ is $p$-periodic.
	 
\noindent{\bf (Necessity:)}
Suppose $Y$ is $p$-periodic and let $\biguplus_{i=1}^{p-1} Y_i$ be the unique (up to cyclic permutation) cyclic partition of $Y$. 
	 
We first show $X$ is $p$-periodic. Indeed, Let  \begin{align} \label{decom_lift}
	     X_i:= \pi^{-1}(Y_i) \qquad \mbox{for all $0\leq i\leq p-1$ }
	 \end{align} Then $X_0, X_1,\cdots, X_{p-1}$ are disjoint closed sets and for each $i$, $X_{i+1} =\sigma (X_i) \bmod p$. Thus, $X$ is $p$-periodic.

Note that $X_i$ must be defined by Equation (\ref{decom_lift}) up to cyclic permutation because the $p$-periodic decomposition of $X$ is unique up to cyclic permutation.
	 
We now turn to condition (2). 

To this end, let $e_{-k}\cdots e_{k}$ and $g_{-k} \cdots g_{k}$ be two edges walks in $G$ such that $\mathcal{L}(e_{-k}\cdots e_{k})= \mathcal{L}(g_{-k}\cdots g_{k})$. By the Pigeon Hole principle, there exists $-k\leq j< l\leq k$ such that $(e_j, g_j)=(e_l, g_l)$. Note that $e_j\cdots e_l$ and $g_j\cdots g_l$ are two cycles in $G$ with the same label. 

Let $\alpha:= (e_j\cdots e_l)^\infty$ and $\beta:=(g_j\cdots g_l)^\infty$, where $\alpha_{[j,l]}=e_j\cdots e_l$ and $\beta_{[j,l]}=g_j\cdots g_l$. Let $y$ denote the common $\mathcal{L}$-labeling of $\alpha$ and $\beta$. Since $y\in Y$, there exists $0\leq i\leq p-1$ such that $y\in Y_i$. Also, since $\alpha, \beta$ are both preimages of $y$, we have $\alpha\in X_i$ and $\beta\in X_i$. 

Now consider $\sigma^l(\alpha)$. We know that $\sigma^l(\alpha)\in \sigma^{l}(X_i)$ and $\sigma^l(\alpha)_0=e_l$. Thus, $[e_l]\subseteq \sigma^{l}(X_i)$. 
Let $\gamma$ be any bi-infinite walk in $G$ with $\gamma_{[-k, k]}=e_{-k}\cdots e_k$. Note that $\sigma^l(\gamma)_0=e_l$. Thus, $\sigma^l(\gamma)\in \sigma^{l}(X_i)$ because $[e_l]\subseteq \sigma^{l}(X_i)$. But this means $\gamma\in X_i$ and therefore $[e_0]\subseteq X_i$ because $\gamma_{0}=e_0$.

By replacing $\sigma^l(\alpha)$ with $\sigma^{l}(\beta)$, a similar argument as above gives $[g_0]\subseteq X_i$, establishing condition (2).
	\end{proof}




The following proposition reveals the relation between the canonical cyclic decomposition and the cyclic partition (the partition that witnesses $q$-periodicity) for an irreducible sofic shift. We refer the reader to Section \ref{section:preliminaries} for notations used below.


\begin{proposition} \label{q-periodic-characterization-by-disjointness}
Let $Y$ be an irreducible sofic shift with global period $p$ and $$
Y=D_0\sqcup D_1\cdots \sqcup D_{p-1}
$$ 
be the canonical cyclic decomposition of $Y$. Then $Y$ is $q$-periodic if and only if $q$ divides $p$ and $C_k^{(q)}$'s are disjoint, where  
$$
C^{(q)}_k:= \bigcup_{m=0}^{p/q-1} D_{k+qm} \quad \mbox{for all} \quad 0\leq k\leq q-1.
$$ 
Moreover, if $Y$ is $q$-periodic, then $Y=\biguplus_{k=0}^{q-1} C^{(q)}_k$ is the unique (up to permutation) $q$-periodic cyclic partition of $Y$. 
\end{proposition}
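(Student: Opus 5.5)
The plan is to compare any cyclic partition with the canonical cyclic decomposition, using two facts about the $D_i$. First, each $D_i$ is closed and $\sigma^p$ is mixing, hence irreducible, on each $D_i$. Second, $\sigma(D_i)=D_{i+1}$ with indices mod $p$.

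\textbf{Necessity.} Suppose $Y=\biguplus_{j=0}^{q-1}Y_j$ is a cyclic partition into clopen sets with $\sigma(Y_j)=Y_{j+1}$ (mod $q$).

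\emph{Each $D_i$ lies in a single $Y_j$.} Fix $i$. Since $\sigma^p$ is irreducible on $D_i$, some point $y\in D_i$ has a dense $\sigma^p$-orbit in $D_i$. Let $y\in Y_j$. Then $\sigma^{p}(y)\in Y_{j+p \bmod q}$, so the $\sigma^p$-orbit of $y$ visits the pieces $Y_{j+np \bmod q}$ for $n\ge 0$. If $q\nmid p$, this orbit meets at least two distinct pieces. Those pieces are clopen and disjoint, and the orbit is dense in $D_i$ (a space that may have no isolated points), so this alone gives no immediate contradiction. I would instead argue directly: $D_i\cap Y_j$ is relatively clopen in $D_i$, and $\sigma^p(D_i\cap Y_j)=D_i\cap Y_{j+p}$. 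Topological mixing of $\sigma^p$ on $D_i$ forces every nonempty relatively open set to meet its $n$-th $\sigma^p$-image for all large $n$. Apply this with $n$ a multiple of the order of $p$ in $\mathbb{Z}/q$, plus one step. Then $D_i\cap Y_j$ meets $D_i\cap Y_{j+p}$. Disjointness then forces $p\equiv 0 \pmod q$, so $q\mid p$. The same argument shows $D_i$ meets only one $Y_j$: if $D_i\cap Y_j$ and $D_i\cap Y_{j'}$ are both nonempty with $j\ne j'$, mixing gives $n$ with $\sigma^{pn}(D_i\cap Y_j)\cap (D_i\cap Y_{j'})\neq\emptyset$, yet $\sigma^{pn}(Y_j)=Y_j$ since $q\mid p$. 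This is a contradiction.

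\emph{Identify the $Y_j$ with the $C^{(q)}_k$.} Say $D_0\subseteq Y_{j_0}$. Applying $\sigma$ gives $D_i\subseteq Y_{j_0+i \bmod q}$. Reindex so that $j_0=0$. Then $Y_k\supseteq C^{(q)}_k$, and since the $D_i$ cover $Y$ we get $Y_k=C^{(q)}_k$. So the $C^{(q)}_k$ are disjoint, and the cyclic partition is unique up to cyclic permutation. This proves both necessity and the moreover part.

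\textbf{Sufficiency.} If $q\mid p$ and the $C^{(q)}_k$ are pairwise disjoint, then each $C^{(q)}_k$ is a finite union of closed sets, hence closed. They cover $Y$ and are disjoint, so each is also open. Finally, $\sigma(C^{(q)}_k)=C^{(q)}_{k+1}$ (mod $q$) follows from $\sigma(D_i)=D_{i+1}$ together with $q\mid p$, which makes the index shift compatible mod $q$.

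The main obstacle is the step showing each $D_i$ lies in a single $Y_j$. There one must use mixing of $\sigma^p$ on $D_i$ with care. The $D_i$ may overlap on boundaries, so the argument should stay inside a single $D_i$, applying mixing to relatively clopen subsets of $D_i$ rather than using global irreducibility of $Y$.
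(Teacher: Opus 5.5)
Your proposal is correct and follows essentially the same route as the paper: mixing of $\sigma^p$ on a cyclic piece $D_i$ confines it to a single $Y_j$, pushing forward by $\sigma$ gives $C^{(q)}_k\subseteq Y_k$, and covering forces $Y_k=C^{(q)}_k$. The only cosmetic difference is the order of steps. The paper passes directly to $\sigma^{pq}$, which fixes every $Y_j$ and is still transitive on $D_0$, so it gets $D_0\subseteq Y_0$ in one step and reads off $q\mid p$ afterwards. You instead first extract $q\mid p$ using exponents $n\equiv 1$ modulo the order of $p$ in $\mathbb{Z}/q$, and then use $\sigma^p$-invariance.
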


\begin{proof}
	The sufficiency is clear.

Suppose $Y$ is $q$-periodic with a  partition
$$
Y=\biguplus_{j=0}^{q-1} Y_j
$$ 
where $Y_i$'s are clopen and disjoint, and 
$\sigma(Y_i)=Y_{i+1} \bmod q$. 

Since $(\sigma^p, D_0)$ is mixing, $(\sigma^{pq}, D_0)$ is mixing. So there is a point $y\in D_0$ such that the orbit $\{\sigma^{mpq}(y): m\in \mathbb{Z}\}$ is dense in $D_0$. For notational convenience, we suppose $y\in Y_0$ (otherwise permute $Y_i$'s cyclicly). Since $\sigma^{mpq}(Y_0)=Y_0$ for each $m$, we have $\{\sigma^{mpq}(y): m\in \mathbb{Z}\} \subset Y_0$ and therefore $D_0\subset Y_0$.
Recall that
$$\sigma(D_i)= D_{i+1} \bmod p \quad \mbox{and} \quad \sigma(Y_j)=Y_{j+1} \bmod q,$$ we have $q$ divides $p$ and  for all $0\leq k \leq q-1$ and all $0\leq m\leq p/q-1$,
$
D_{k+mq}\subset  Y_k,
$
 which gives $C_{k}^{(q)}\subset Y_k$ for all $0\leq k\leq q-1$. Note that $\cup_{k=0}^{q-1} C_k^{(q)}=Y$, we indeed have $C_k^{(q)}=Y_k$ for all $0\leq k\leq q-1$. 
 As a consequence, $C_k^{(q)}$'s are disjoint becuase $Y_k$'s are.
\end{proof}

\begin{remark}
	According to the proof of Proposition \ref{q-periodic-characterization-by-disjointness}, any $q$-periodic partition $Y=\biguplus_{k=0}^{q-1} Y_k$ of $Y$ must satisfy $Y_k= C_k^{(q)}$, where $C_k^{(q)}$ are defined in Proposition \ref{q-periodic-characterization-by-disjointness}.  Thus, if an irreducible sofic shift is $q$-periodic, then the $q$-periodic witness must be unique (up to cyclic permutation). 
\end{remark}

\begin{remark}
The reader can easily check that the disjointness of $C^{(q)}_k$ in Proposition \ref{q-periodic-characterization-by-disjointness} is finitely decidable. Therefore, Proposition \ref{q-periodic-characterization-by-disjointness} gives another way of proving the finite decidability of $q$-periodicity of an irreducible sofic shift, which is an alternative to Proposition \ref{p-periodic-finitely-decidable}.   
\end{remark}

{
\begin{theorem} \label{finite-deci-main-theorem}
Let $Z$ be an irreducible sofic shift and $Y$ be an (irreducible) near Markov shift with $h(Z) < h(Y)$.  There is a finite procedure that determines whether $Z$ can be properly embedded into $Y$.\end{theorem}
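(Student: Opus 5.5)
By Theorem~\ref{embedding_into_near_markov}, since $h(Z)<h(Y)$ rules out case (1), $Z$ embeds into $Y$ if and only if condition (2) holds. By the ``moreover'' part of that theorem we may also require $\hat Z$ to be irreducible. The plan is to show that condition (2), with $\hat Z$ irreducible, can be checked by a finite search followed by finitely many finite checks.

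\textbf{Finite candidate set.} First compute the minimal cover $\pi_Y:\hat Y\to Y$ and the finite sets $Y_0$ and $\hat Y_0$. By Lemma~\ref{near-markov-minimal-cover} and Corollary~\ref{near-markov-minimal-cover2}, this cover is effectively obtainable from a presentation of the AFT shift $Y$, for instance via the construction in \cite{BKM}. Any admissible $Z_0$ embeds into $Y_0$, so its orbits have least periods among those occurring in $Y_0$. Only finitely many such periodic orbits of $Z$ exist, and they can be listed from a presentation of $Z$ by enumerating cycles of bounded length. Likewise $\hat Z_0$ embeds into $\hat Y_0$, so it is determined up to conjugacy by a finite list of orbit lengths. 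Hence there are only finitely many candidate tuples $(Z_0,\hat Z_0,f_0,\rho_0,\hat\rho_0)$, and commutativity of diagram (\ref{commdiag}) can be checked directly for each.

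\textbf{Checks for each candidate.} For every candidate tuple we verify (b), (c) and (d).
\begin{enumerate}
\item[(b)] Whether an irreducible $f_0$-blowup exists is decidable by Corollary~\ref{consequences}(1). The check searches over the finitely many $f_0$-adapted relabellings $L^*$ of a $Z_0$-adapted presentation $(G,L)$ obtained by Lemma~\ref{pre-SFT-is-SFT}.
\item[(c)] The periodic point inequality $q_n(Z\setminus Z_0)\le q_n(Y\setminus Y_0)$ for all $n$ becomes a finite check. Both sides are computable from zeta functions of sofic shifts, i.e.\ from traces of finitely many matrices. Because $h(Z)<h(Y)$, the standard estimates (as in \cite[p.~351]{LM}) give an explicit $n_0$ beyond which the inequality holds automatically, so only $n<n_0$ needs checking.
\item[(d)] Here the choice of $\hat Z$ matters, because Example~\ref{blowups-may-not-conjugate} shows that blowups need not be unique. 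However, by Propositions~\ref{2blowup_conj} and \ref{relabel}, every irreducible $f_0$-blowup is conjugate to one given by some $f_0$-adapted $L^*$, and there are finitely many of these. For each, $\hat Z$ is the sofic shift presented by $(G,L^*)$. Its $p$-periodicity, with $p$ the computable period of the irreducible SFT $\hat Y$, is decidable by Proposition~\ref{p-periodic-finitely-decidable} or by Proposition~\ref{q-periodic-characterization-by-disjointness}. Since $p$-periodicity is a conjugacy invariant, it suffices to test these finitely many representatives.
\end{enumerate}
Output ``yes'' exactly when some candidate passes all checks.

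The step I expect to be the main obstacle is (d), because blowups are non-unique and $p$-periodicity must be certified for some blowup, not a fixed one. This is handled by the finite enumeration of standard-form blowups. A secondary point needing care is making the near Markov minimal cover and the bound $n_0$ in (c) explicitly computable.
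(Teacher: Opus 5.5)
Your enumeration of the candidate codes $f_0$ and your handling of (b) and (d) match the paper's proof. Step (c), however, is false as you state it whenever the global period $p$ of $\hat Y$ is greater than $1$.

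Since $\hat Y$ is an irreducible SFT of period $p$ and $\pi_Y$ is a blowup, $q_n(Y\setminus Y_0)=q_n(\hat Y\setminus \hat Y_0)=0$ for every $n$ with $p\nmid n$. The hypothesis $h(Z)<h(Y)$ gives no control over $q_n(Z\setminus Z_0)$ for such $n$. For example, let $Y$ be an irreducible SFT of period $2$ (near Markov with $\pi_Y$ the identity and $Y_0=\emptyset$), and let $Z$ be a mixing SFT with $0<h(Z)<h(Y)$. Then $q_n(Z)>0=q_n(Y)$ for all large odd $n$. So there is no $n_0$ beyond which the inequality ``holds automatically.'' Lower bounds of the form $q_n \ge \lambda_Y^n-\cdots$ are mixing-case estimates and say nothing off the multiples of $p$. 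Your justification for truncating the check at $n_0$ therefore fails, and (c), checked on its own per candidate tuple, is not reduced to a finite check.

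The missing idea is to couple (c) with (d). If $\hat Z$ is $p$-periodic, then $q_n(\hat Z)=0$ unless $p\mid n$. Hence $q_n(Z\setminus Z_0)=q_n(\hat Z\setminus \hat Z_0)=0$ for $p\nmid n$, so (c) only needs checking at $n=pk$.

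At $n=pk$, the period-$p$ spectral structure of an adjacency matrix for $\hat Y$ gives $q_{pk}(\hat Y)\ge p\lambda_Y^{pk}-\sum_{r\in R}a_r r^{pk}$ with every $r<\lambda_Y$. The matrices from the rationality of the zeta function give $q_{pk}(\hat Z)\le \sum_{s\in S} b_s s^{pk}$ with every $s\le \lambda_Z<\lambda_Y$. Together these yield a computable $k_0$, chosen also to exceed the periods occurring in $\hat Y_0$ and $\hat Z_0$, past which (c) holds. Only $k<k_0$ then remains to be checked. This is exactly how the paper handles condition 2(c).

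Alternatively, you could decide separately, from the rational zeta function, whether $Z\setminus Z_0$ has any periodic point whose least period is not divisible by $p$; but some such argument is needed. In the mixing case $p=1$, your step (c) is correct as written.
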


\begin{proof}
    Since $h(Z) < h(Y)$, Theorem \ref{embedding_into_near_markov} (part 2) gives necessary and sufficient conditions for properly embedding $Z$ into $Y$: namely the conditions 2(a)-(d), which in particular specifies the existence of a blow-up $\hat{Z}$ of $Z$. 
    Since we are assuming that $Z$ is irreducible, according to the ``Moreover'' of Theorem \ref{embedding_into_near_markov}, we may assume that $\hat{Z}$ is irreducible. 

For each of the finitely many choices
of factor codes $f_0: \hat{Z}_0 \to Z_0$ for which there are embeddings $\hat{\rho}_0, \rho_0$ that satisfy the commutative diagram (\ref{commdiag}) in Theorem \ref{embedding_into_near_markov}, Corollary \ref{consequences} (1) gives a finite procedure to decide whether there is  an irreducible $f_0$-blowup $\pi_Z: \hat{Z} \to Z$. In other words, it decides the existence of an $f_0$-blowup that satisfies conditions 2(a),2(b).


{By the ``In fact'' part of Proposition~\ref{relabel}, for a given $f_0$, we obtain a list  of all irreducible $f_0$-blowups in standard form, we can decide which ones of these (if any) are $p$-periodic by Proposition~\ref{p-periodic-finitely-decidable}.  Since, every $f_0$-blowup is conjugate to one in standard form, 
this handles condition 2(d). }

Finally we turn to checking condition 2(c). By definition of blowup this is the same as checking
$$q_{n}(\hat{Z}\setminus \hat{Z_0}) \le q_{n}(\hat{Y}  \setminus \hat{Y_0})$$ for all positive integers $n$.

For each of the blowups $\hat{Z}$ that are $p$-periodic and each $n$, $q_n(\hat{Z}) = 0$ unless $p$ divides $n$.  Since $\hat{Y}$ is an irreducible SFT with  global period $p$,  $q_n(\hat{Y}) = 0$ unless $p$ divides $n$.

Thus, to check condition 2(c), we must check 
whether  
$$q_{pk}(\hat{Z}\setminus \hat{Z_0}) \le q_{pk}(\hat{Y} \setminus \hat{Y_0})$$ for all positive integers $k$.


In the following we write $h(\hat{Y}) = h(Y) = \log \lambda_Y$ and  $h(\hat{Z}) = h(Z) = \log \lambda_Z$, where, of course, $0 < \lambda_Z < \lambda_Y. $

We claim that there is a lower bound:
\begin{equation}
\label{lower_bound}
 q_{pk}(\hat{Y}) \ge p(\lambda_Y)^{pk} - 
 \sum_{r \in R} a_r r^{pk}
\end{equation}
for some finite set $R$ of numbers $0 < r < \lambda_Y$ and each $a_r > 0$.
Here, $R$ consists of $\sqrt{\lambda_Y}$ and the absolute values  of all nonzero eigenvalues $ < \lambda_Y$ of an adjacency matrix representing the SFT $\hat{Y}$. In fact, this is done in the computation on~\cite[pp. 350-351]{LM}. 

We also claim that there is an upper bound:
\begin{equation}
\label{upper_bound}
 q_{pk}(\hat{Z}) \le  
 \sum_{s \in S} b_s s^{pk}
\end{equation}
for some finite set $S$ of numbers $0 < s 
\le \lambda_Z$ and each $b_s > 0$.
This can be done by using the eigenvalues in the adjacency matrices constructed in the proof of the rationality of the zeta function for sofic shifts~\cite[Theorem 6.4.6]{LM}.

Combining these lower and upper bounds we obtain 
\begin{equation}
\label{combine}
q_{pk}(\hat{Y}) - q_{pk}(\hat{Z})\ge p(\lambda_Y)^{kp} - 
 \sum_{r \in R} a_r r^{pk} -  \sum_{s \in S} b_s s^{pk}
\end{equation}
for all $k$. 

Since $\lambda_Y > r$ for each $r \in R$ and 
$\lambda_Y > \lambda_Z \ge s$ for each $s \in S$, it follows that for some $k = k_0$, the right hand side of (\ref{combine}) is strictly positive. We may assume that $k_0$ exceeds all the periods of points in $\hat{Y_0}$ and  $\hat{Z_0}$. An exercise in calculus shows that for $k \ge k_0$, the right-hand side is increasing. Thus, for all $k \ge k_0$, we have
$$
q_{kp}(\hat{Y}) - q_{kp}(\hat{Z}) > 0.  
$$

In order to check condition 2(c), it now remains only to check for each $k < k_0$:
$$
q_{kp}(\hat{Y}\setminus \hat{Y_0}) - q_{kp}(\hat{Z}\setminus \hat{Z_0}) \ge 0.
$$
\end{proof}
}

{\begin{remark}
    If $Y$ is  mixing, then the proof of Theorem \ref{finite-deci-main-theorem} is simpler: by \cite[Remark 5.3]{MMTW}, the global period of $Y$ is $1$. Since $\pi_Y: \hat{Y}\to Y$ is almost invertible, the global period of $\hat{Y}$ is also $1$ by \cite[Proposition 5.1]{MMTW}. Now Condition 2(d) in Theorem \ref{embedding_into_near_markov} trivially holds because any subshift is $1$-periodic.
\end{remark}
}

{\em Acknowledgement:} We would like to acknowledge Klaus Thomsen for many helpful discussions on topics closely related to this paper.


\begin{thebibliography}{WWWWW}

\bibitem[B]{B} M. Boyle, {\em Lower entropy factors of sofic systems}, Ergod. Th. Dynam. Syst. {\bf 4} (1984), 541-557.

\bibitem[BKM]{BKM} M. Boyle, B. Kitchens and B. Marcus, {\em A note on minimal covers for sofic systems}, Proceedings of the American Mathematical Society, 95, no. 3 (1985): 403–11.

\bibitem[BK]{BK} M. Boyle and W. Krieger, {\em Almost Markov and Shift Equivalent Sofic Systems}, Springer Lecture Notes in Mathematics, no. 1348 (1988), 335-395. 

\bibitem[BMT]{BMT} M. Boyle, B. Marcus and P. Trow, {\em Resolving maps and the dimension group for shifts of finite type}, Volume 70, Memoirs of the American Mathematical Society, 1987. 

\bibitem[Ki]{Ki} B. Kitchens, {\em Symbolic Dynamics: One-sided, Two-sided and countable stste}, Springer Universitext, 1988.  

\bibitem[Kr1]{Kr1} W. Krieger, {\em On the subsystems of topological Markov chains}, Ergod. Th. Dynam. Sys. {\bf 2} (1982), 195-202.

\bibitem[Kr2]{Kr2} W. Krieger, {\em On images of sofic systems},arXiv:1101.1750v2, 2018 (v1 in 2011). 

\bibitem[Kr3]{Kr3} W. Krieger, {\em On the subsystems of certain sofic shifts}, arXiv:2507.02717v2, 2025.




\bibitem[LM]{LM} D. Lind and B. Marcus, {\em An introduction to symbolic dynamics and coding}, second edition, Cambridge Mathematical Library, Cambridge University Press (2021).

\bibitem[M]{M} B. Marcus, {\em Sofic systems and encoding data}, IEEE Trans. Inform. Theory, {\bf 31}, (1985), 366-377.

\bibitem[MMTW]{MMTW} B. Marcus, T. Meyerovitch, K. Thomsen and C. Wu, {\em Factorizable Embeddings and the Period
of an irreducible sofic shift}, Preprint, arXiv:2508.02554, 2025.

\bibitem[T]{T} K. Thomsen, {\em On the structure of a sofic shift space}, Trans. Amer. Math. Soc. {\bf 356} (2004), 3557-3619.

\bibitem[W]{W} B. Weiss, {\em Subshifts of finite type and sofic systems}, Monatshefte Mathematik,
77 (1973), 462-474.

     

\end{thebibliography}
\end{document}